\newcommand*\bigcdot{\mathpalette\bigcdot@{1}}
\newcommand*\bigcdot@[2]{\mathbin{\vcenter{\hbox{\scalebox{#2}{$\m@th#1\bullet$}}}}}
\definecolor{ao(english)}{rgb}{0.0, 0.0, 1.0}
\newtheorem{theorem}{Theorem}
\newtheorem{corollary}{Corollary}
\newtheorem{lemma}{Lemma}
\newtheorem{remark}[theorem]{Remark}
\newtheorem{case}{Case}
\title[On the Theory of Lucas Coloring]{On the Theory of Lucas Coloring}
\author[P. Paul]{Pravakar Paul}
\address{Mathematical and Physical Sciences division, School of Arts and Sciences, Ahmedabad University, Ahmedabad 380009, Gujarat, India}
\email{pravakar.paul@ahduni.edu.in}
\keywords{Categorification, Karoubi envelope, Khovanov, Perfect matching, domino tilings, Aztec diamonds, lozenge tilings, tiling enumeration.}
\subjclass[2020]{Primary 05C30; Secondary 18N25, 57K18, 05A15, 05A19, 05C70, 18A10, 52C20}
\begin{document}

\begin{abstract}
In this paper, we introduce the notion of "$Lucas-Coloring$" associated with a planar graph $g$. When 
$g$ is a $4$-regular, the enumeration of $Lucas-Coloring$ has an interesting interpretation. Specifically, it yields a numerical invariant of the associated Khovanov-Lee complex of any link diagram $D$ whose projection is equal to $g$. This complex resides in the Karoubi envelope of Bar-Natan's formal cobordism category, $Cob^{3}_{/l}$ . The Karoubi envelope of $Cob^{3}_{/l}$  was introduced by Bar-Natan and Morrison to provide a conceptual proof of Lee's theorem. As an application of "Lucas-Coloring", we first show how the Alternating Sign Matrices can be retrieved as a special case of $Lucas-Coloring$. Next, we show a certain statistic on the $Lucas-Coloring$ enumerates the perfect matchings of a canonically defined  graph on $g$. This construction allowed us to derive a summation formula of the enumeration of lozenge tilings of the region constructed out of a regular hexagon by removing the "maximal staircase" from its alternating corners in terms of powers of $2$. This formula is reminiscent of the celebrated Aztec Diamond Theorem of Elkies, Kuperberg, Larsen, and Propp, which concerns domino tilings of Aztec Diamonds. 
\end{abstract}
\maketitle
\section{Introduction}
Khovanov categorified the Jones polynomial by constructing the invariant which is
today called Khovanov Homology in \cite{MR1740682}. To a link diagram D, Khovanov associated a bigraded co-chain complex $CKh(D)$, with bigradings indexed by $\mathbb{Z} \times \mathbb{Z} $. The first grading is called the homological grading and the second grading is called the quantum grading.  The fundamental relationship between the co-chain complex $CKh(D)$ and the Jones Polynomial $V_{D}(q)$ is given by the following equation: 
\[ \sum_{i,j \in \mathbb{Z}} (-1)^{i} q^{j} \cdot rk(Kh^{i,j}(D))= V_{D}(q) \] 
where $Kh^{i,j}(D)$ is the homology of the complex $CKh(D)$ at the bi-degree $(i,j)$. Recall that the differential in the Khovanov complex $d_{Kh}$ has bi-degree $(1,0)$.  If $D$ and $D^{'}$ are related by Reidemiester moves then Khovanov showed that the associated chain complexes $CKh(D)$ and $CKh(D')$ are canonically homotopic equivalent. Thus, the entire homotopy type of the co-chain complex $CKh(D)$ is a link invariant.

Later in a beautiful paper \cite{MR2173845} Lee introduced a new perturbation $\Phi$ in the Khovanov complex \[(CKh(D), d_{Kh})\] and showed that the resulting double complex known as the Khovanov-Lee complex \[(CKh(D), d_{Kh}+ \Phi)\] has a simple homology. Although the homology of the Khovanov-Lee complex is quite simple, but it turns the Khovanov complex into a filtered chain complex. Later on, Rasmussen used this filtration on the Khovanov complex to define the $s-$ invariant in \cite{MR2729272} which he used masterfully to prove Milnor's conjecture on Torus knots.  

Dror Bar-Natan in \cite{MR2174270} unified both of these constructions by introducing the category $Cob_{/l}^{3}$ which is a quotient category of the Cobordism category $Cob^{3}$. The objects in $Cob^{3}$ are given by finite collection of embedded one dimensional manifolds with boundary in the unit disk $D \subset \mathbb{R}^2$. The morphism between two such objects are given by cobordisms regarded up to boundary preserving isotopies that is 
\[ Mor_{Cob^{3}} \left( C_1, C_2 \right) = \{ \text{Collection of embedded surfaces} \, \,  S \subset D \times [0,1] | S \cap D \times 0 = C_1 \text{and} S \cap  D  \, \, \times \, \,  1= C_2  \}/ \sim  \]
 where $S_{1} \sim S_2$ if there is an isotopy  $\{f_{t} \}$ of $D \times [0,1]$ that preserves the boundary and isotopes $S_1$ to $S_2$.  The relations in $Cob^{3}_{/l}$ are given in \ref{the relations}: 
\begin{figure}[!htb]
\begin{center}
   \hspace{0cm} \scalebox{0.5}{\includegraphics{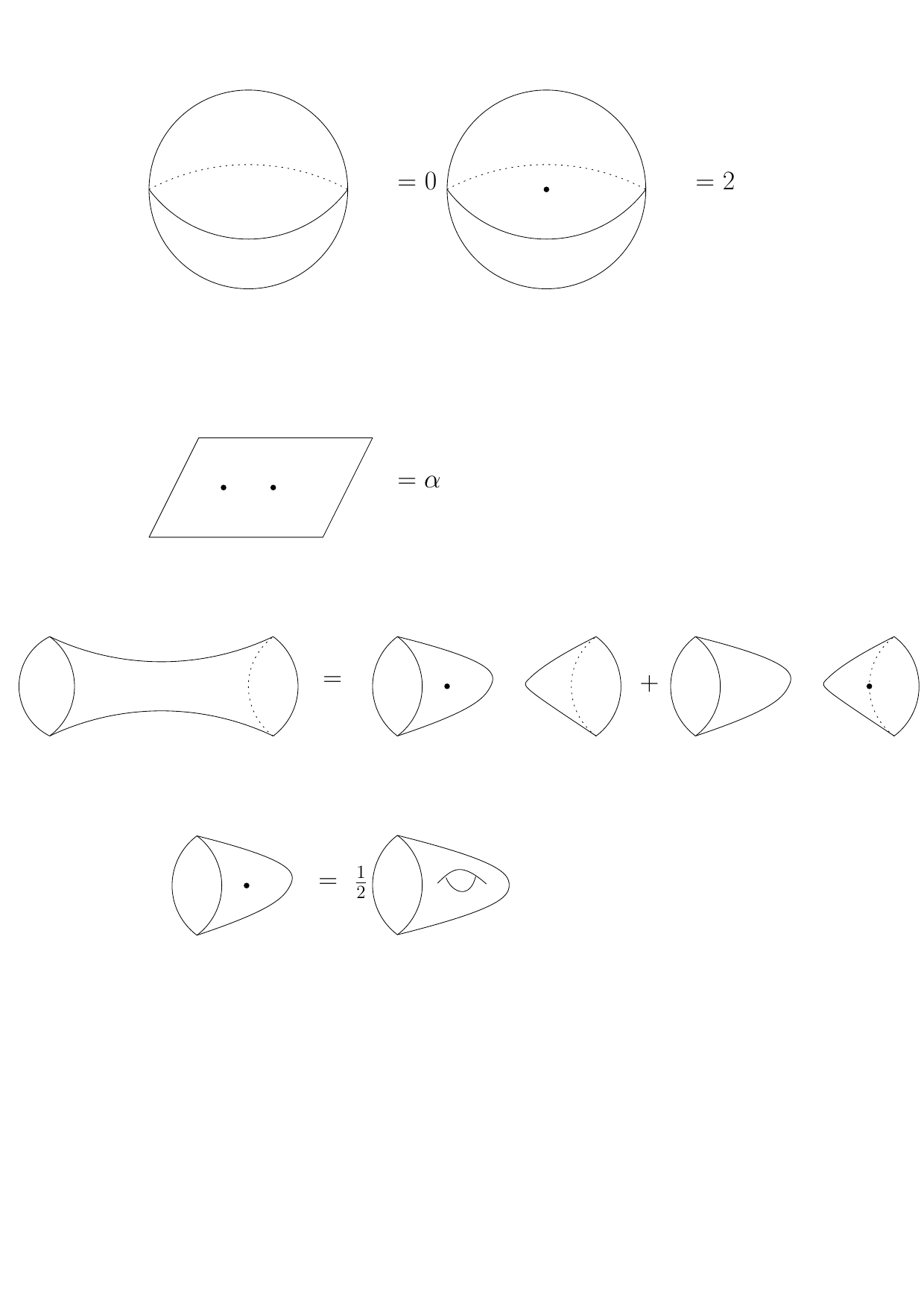}}
\vspace{1cm}\caption{The relations in Bar-Natan's category $Cob^{3}_{/l}$}
\label{the relations}
\end{center}
\end{figure}

It is important to note that, Khovanov's differential $d_{Kh}$ corresponds to setting $\alpha =0$ and Khovanov-Lee's perturbation $ \left(d_{Kh}+ \Phi \right)$ corresponds to setting $\alpha = 1$. 

Later on, Bar-Natan and Morrison in \cite{MR2253455} reinterpreted Lee's perturbation as a construction in the Karoubi envelope, $ Kar(Cob^{3}_{/l})$. The Karoubi envelope, $ Kar(Cob^{3}_{/l})$ is not as  "rigid" as $Cob^{3}_{/l}$ but flexible enough to yield a conceptual understanding of Lee-s work. In this process, Bar-Natan and Morrison introduced a bi-colored theory which is the fundamental construction of $ Kar(Cob^{3}_{/l})$. They used the colors red and green that satisfy the following relations in \ref{karubi}: 
\begin{figure}[!htb]
\begin{center}
   \hspace{0cm} \scalebox{0.5}{\includegraphics{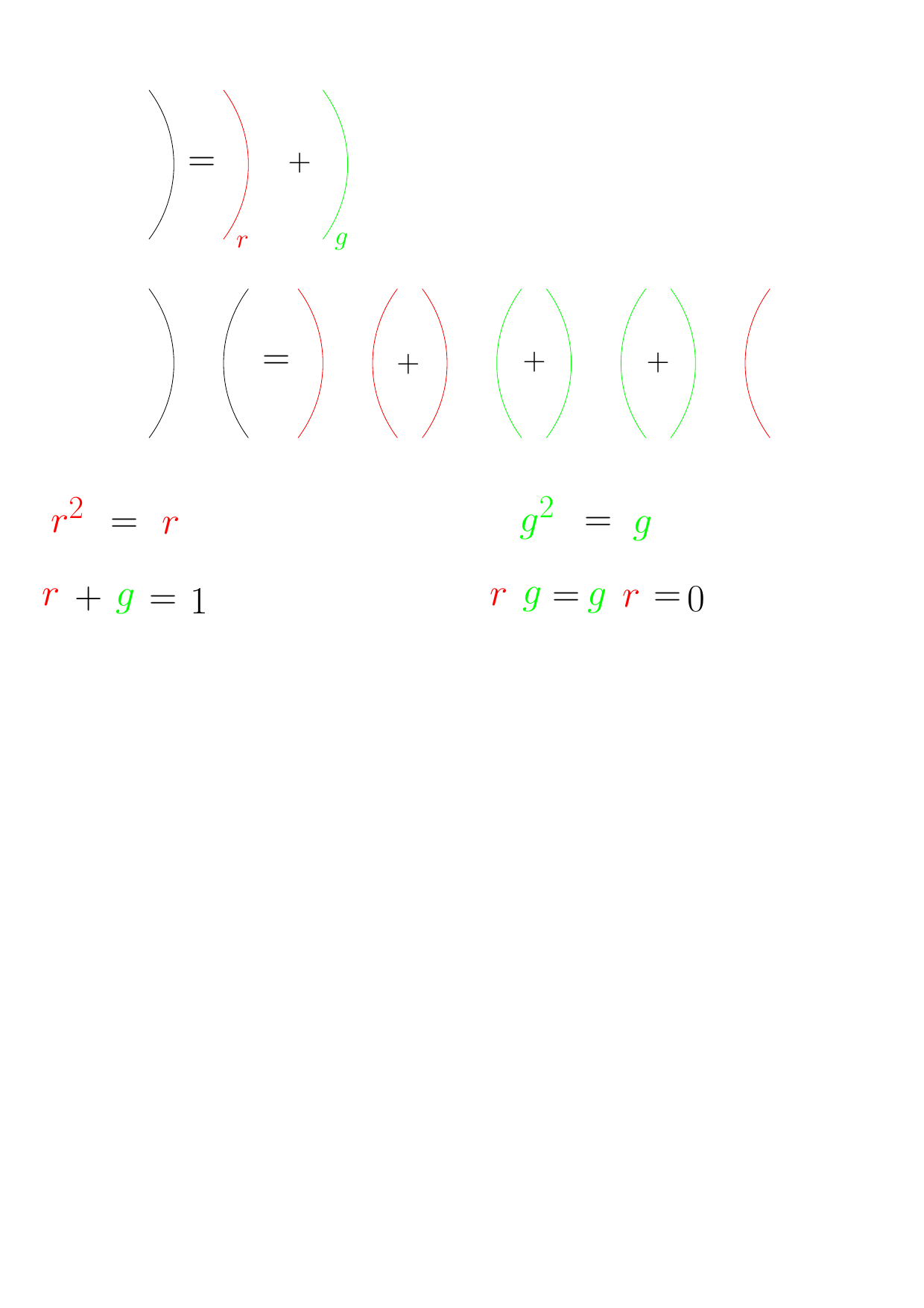}}
\vspace{1cm}\caption{The relations in Karoubi envelope $Kar(Cob^{3}_{/l})$}
\label{karubi}
\end{center}
\end{figure}

On the other spectrum of the story, in \cite{PaulSaikia} we introduced a novel approach to counting the number of perfect matchings $M(G)$ of a graph $G$, inspired by a construction in Algebraic Topology. Although we could successfully reconstitute the relationship between the Aztec diamond and the Alternating sign matrix, as discovered in \cite{AD1}, we began exploring new problems where our construction could be applied. Aztec diamond of order $n$ is defined as the union of all unit squares inside the contour $|x|+ |y|=n+1$ (see Figure \ref{Aztec}). An alternating sign matrix (ASM) is a square matrix whose entries are from the set $ \{-1,0, +1 \}$, with the conditions that each row and column sums to $1$ and the non-zero entries in each row and column alternate in signs. The enumeration of domino tilings corresponds to the enumeration of perfect matchings of its weak dual graph. The exact relationship between the Aztec diamond and the Alternating sign matrix is given by the following theorem: 
\begin{figure}
\begin{center}
   \hspace{5cm} \scalebox{0.7}{\includegraphics{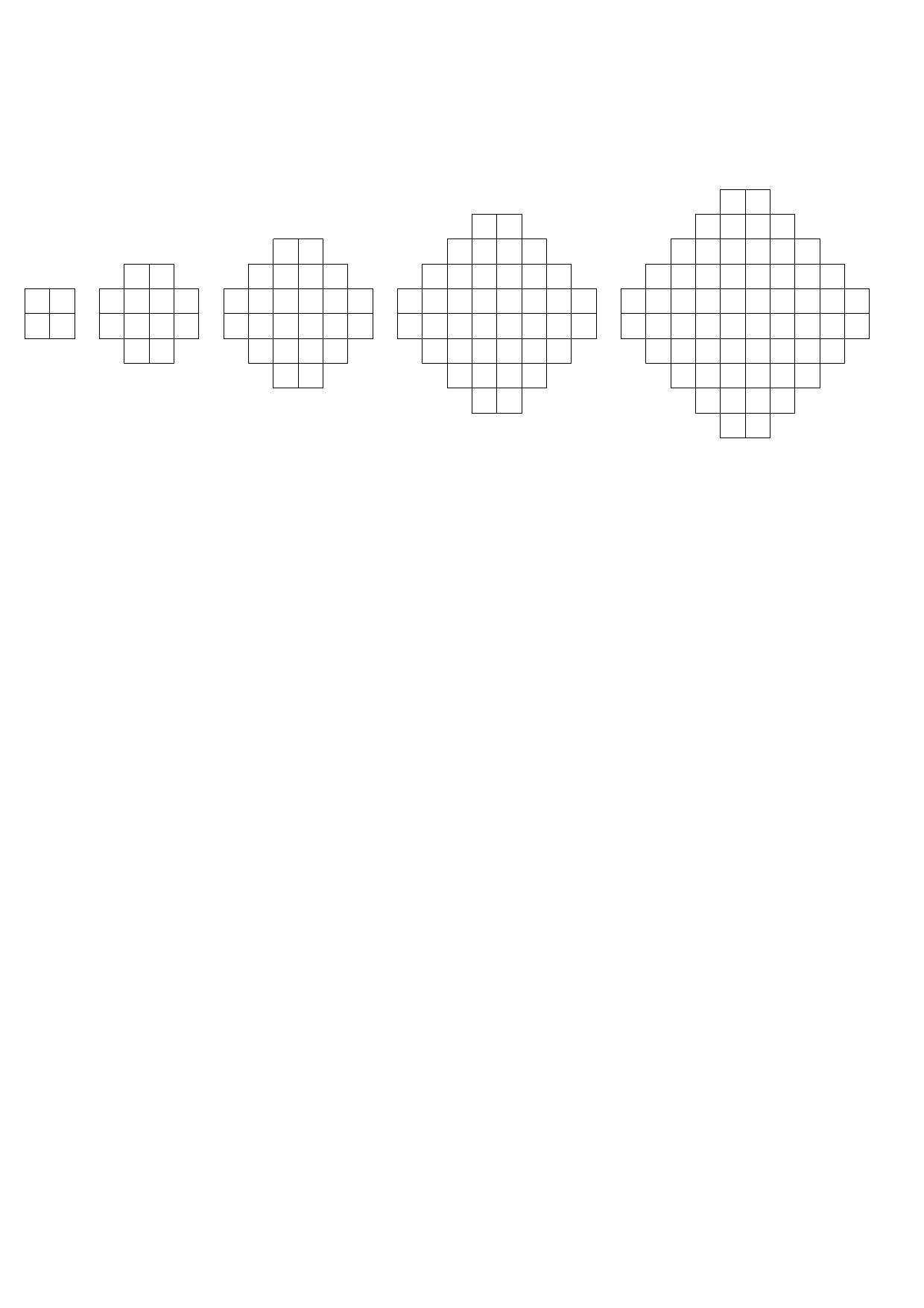}}
\vspace{-14cm}
\caption{The Aztec diamond of order $n$ for $n=1, 2, 3, 4$ and $5$.}
\label{Aztec}
\end{center}
\end{figure}

\begin{theorem}{(\cite{AD1}, \cite{PaulSaikia})}
    If $AD(n)$ denotes the weak dual of the Aztec diamond of order $n$, then we have the following relationship \[ M(AD(n)) = \sum_{A \in \mathcal{A}_{n}} 2^{N_{+}(A)} =  \sum_{A \in \mathcal{A}_{n+1}} 2^{N_{-}(A)},   \] 
    where $\mathcal{A}_{n}$ denotes the set of $(n \times n)$ ASM. In addition, $N_{+} (A)$ and $N_{-}(A)$ denote the numbers of $+1$ or $-1$ in $A$, respectively. 
\end{theorem}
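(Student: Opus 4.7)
The plan is to prove the first equality $M(AD(n)) = \sum_{A \in \mathcal{A}_n} 2^{N_+(A)}$ by constructing an explicit map $\Phi$ from domino tilings of $AD(n)$ onto $\mathcal{A}_n$ whose fiber over each $A$ has cardinality $2^{N_+(A)}$, and then to deduce the second equality by an analogous construction in which the ambient size is bumped to $(n+1) \times (n+1)$ and the roles of $N_+$ and $N_-$ are swapped.

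My first step would be to place $AD(n)$ on the integer lattice and attach to each domino tiling $T$ its Thurston height function $h_T$ on the vertices of the diamond. The Thurston theory provides a bijection between tilings and a combinatorially defined set of height functions, so the problem reduces to counting height functions. Focusing on a chosen checkerboard sublattice of ``black'' vertices $V_B$ inside $AD(n)$, one reads off a local type in $\{-1, 0, +1\}$ at each of the $n^2$ black vertices from the four surrounding heights. The alternating-sign and unit row/column-sum conditions then follow from the local height-increment rules, so this assignment yields a well-defined map $\Phi : \{\text{tilings of } AD(n)\} \to \mathcal{A}_n$.

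The second step is the fiber count. A black vertex assigned the value $-1$ or $0$ forces a unique local domino configuration in its neighborhood, while a vertex assigned $+1$ corresponds to a $2 \times 2$ saddle that admits exactly two local tilings. The crux is to show that, given any $A \in \mathcal{A}_n$, the two-fold choices at distinct $+1$-vertices can be made independently and always extend to a globally consistent tiling; this is the step where the ASM axioms interlock with the rigidity of the height function. Granting this, $|\Phi^{-1}(A)| = 2^{N_+(A)}$, and summing over $A$ gives the first identity. For the second identity, the same machinery is applied with the roles of the two color classes of the underlying checkerboard interchanged, so that the bounding ASM lives in $\mathcal{A}_{n+1}$ and the saddle vertices now correspond exactly to the $-1$-entries, yielding the factor $2^{N_-(A)}$.

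The main obstacle I anticipate is the independence assertion in the fiber-count step: a priori, flipping the tiling inside one saddle could alter heights at other saddles and obstruct further flips. The way I would handle this is to work entirely at the level of height functions, proving a local-flip lemma that says changing the tiling at a single $+1$-vertex modifies $h_T$ only at that one vertex, leaving all other heights and hence all other local types unchanged. Once this lemma is in place the independence is immediate, and both identities fall out of the fiber count described above.
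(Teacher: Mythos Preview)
This theorem is not actually proved in the present paper: it is quoted in the introduction with attribution to \cite{AD1} and to the author's own earlier work \cite{PaulSaikia}. So there is no in-text proof to match against line by line. Your sketch is, however, essentially the original Elkies--Kuperberg--Larsen--Propp argument of \cite{AD1}: encode domino tilings by Thurston height functions, read an ASM off the local height pattern on one checkerboard class of vertices, and count each fiber by observing that a $+1$-entry corresponds to a $2\times 2$ block admitting two independent local fillings while $0$- and $-1$-entries are forced. Your identification of the independence of flips as the one nontrivial point, and your proposed resolution via the lemma that a single flip alters the height function at exactly one vertex, is precisely how that argument closes. The passage to $\mathcal{A}_{n+1}$ and $N_{-}$ by switching to the complementary color class is likewise the standard move.

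The author's own reproof in \cite{PaulSaikia}, whose machinery is summarized in the appendix here, takes a genuinely different route. There is no height function; instead one cuts $AD(n)$ into small pieces along distinguished vertices, assigns to each piece its state-sum vector $v_G$ in a tensor power of the matching algebra $\mathcal{M}=\mathbb{Z}\langle y,n\rangle/\langle y^2=y,\;yn=ny=n,\;n^2=0\rangle$, and recovers $M(AD(n))$ by gluing via the Patching Lemma. The ASM and the factor $2^{N_{+}(A)}$ then emerge from bookkeeping which simple tensors survive the internal multiplications. Your height-function approach is more geometric and yields a clean bijective picture of the fibers; the matching-algebra approach is more algebraic and modular, and it is exactly this modularity that the present paper exploits to extend the story from grid graphs to arbitrary planar graphs via Lucas colorings in Theorems~\ref{4} and~\ref{5}.
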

Similar to domino tilings, there is a vast amount of rich literature on lozenge tilings. For example see \cite{MR1682965}, \cite{MR1845144}, \cite{MR1855591}, \cite{MR3047654}, \cite{MR3433498}, \cite{MR4620362}, \cite{MR994034}, \cite{MR2259946} and \cite{MR1318529}.  In special cases, they could be transformed to each other by virtue of Kuperberg's "urban renewal" trick (see Section $2$ of \cite{MR2663560}) . But the precise relationship between lozenge and domino tilings remained elusive to the experts. In this paper, we attempt to build an object similar to the ASMs for lozenge tilings. In particular we ask the following question: \\ \\ 
 What is the combinatorial object similar to Alternating sign matrices for lozenge tilings? \\ \\
 Although the question is rather vaguely formulated, there are essentially two challenges associated to the problem. The first challenge is to identify an exact lozenge tiling enumeration problem analogous to the domino tiling enumeration of the Aztec diamond. The second is to build an object analogues to alternating sign matrix (ASM) and a relationship as described in theorem $1$. To overcome the first challenge, we investigate the problem $1.5$ of \cite{MR1940333} (see $p.207$). We shall describe the problem in detail in the Section $5$. For the second challenge, we come up with the notion of $Lucas-Coloring$ of a planar graph.

In the special case, when $g$ is a $4-$ regular graph, the $Lucas-Coloring$ arises as follows: 
Suppose $D$ is a link diagram whose regular projection onto the $XY-$ plane is equal to $g$. Bar-Natan's construction defines a formal chain complex $[[D]]$ of complete resolutions of $D$ in the category of formal chain complexes $Kom(Cob^{3}_{/l})$. Now this chain complex has a canonical augmentation in the complexes over its Karoubi envelope $Kom(Kar(Cob^{3}_{/l}))$. In this case there is a one to one correspondence between the $Lucas-Colorings$ of $g$ and the non-zero summands in $Kom(Kar(Cob^{3}_{/l}))$ of the augmentation $Kar[[D]]$. 
More specifically, we prove the following theorem:

\begin{theorem}\label{2}
 For any link diagram $D$ whose projection onto the $XY$- plane is $g$, the number of direct summands in the chain complex $Kar[[D]] \in Kom(Kar(Cob^{3}_{/l} )) $ is exactly equal to the cardinality $|Luc(g) |$.    
\end{theorem}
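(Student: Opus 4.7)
The strategy I would follow is a definition-chase: unfold $Kar[[D]]$ as a direct sum of objects in $Kar(Cob^{3}_{/l})$ indexed by explicit combinatorial data, then match that indexing set bijectively with $Luc(g)$.

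First I would unpack the cube of resolutions. Following Bar-Natan's construction, if $D$ has $n$ crossings then $[[D]] \in Kom(Cob^{3}_{/l})$ has $2^{n}$ vertex-objects, one for each $s \in \{0,1\}^{n}$. The object indexed by $s$ is the complete resolution $R_{s}$, a disjoint union of some number $k_{s}$ of planar circles obtained by performing the $s_{i}$-smoothing at each crossing. The key observation at this stage is that the family $\{R_{s}\}_{s}$ together with the integers $\{k_{s}\}_{s}$ depends only on the underlying $4$-regular planar graph $g$ and not on the over/under information encoded in $D$; this is exactly what will allow the final count to be stated intrinsically in terms of $g$.

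Next I would apply the Karoubi envelope. The Bar-Natan--Morrison relations displayed in Figure \ref{karubi} furnish a canonical orthogonal decomposition of each circle as a direct sum of two idempotent summands, one red and one green. A brief check, using the surface relations of Figure \ref{the relations} together with the Karoubi relations, confirms that both summands are non-zero and non-isomorphic. Therefore, for each $s \in \{0,1\}^{n}$, the image of $R_{s}$ in $Kar(Cob^{3}_{/l})$ splits as an internal direct sum of exactly $2^{k_{s}}$ non-zero summands, indexed by the $2^{k_{s}}$ red/green colorings of the $k_{s}$ circles of $R_{s}$. Aggregating over the cube, the total number of direct summands of $Kar[[D]]$ equals $\sum_{s \in \{0,1\}^{n}} 2^{k_{s}}$. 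By the definition of Lucas-Coloring, an element of $Luc(g)$ is precisely a choice of smoothing at each vertex of $g$ together with a red/green coloring of each resulting circle, so $|Luc(g)|$ equals the same sum, and the desired bijection is the tautological one.

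The principal obstacle, in my view, is not the bookkeeping but the categorical input that feeds into it, namely verifying that the two colored summands of each circle are genuinely non-zero and pairwise non-isomorphic in $Kar(Cob^{3}_{/l})$, so that no accidental collapsing contaminates the count. I would handle this by writing the red and green idempotents in closed form from the diagrammatic generators and checking orthogonality and primitivity inside the endomorphism algebra of a single circle; once this is settled, the theorem is a tautology. A secondary care point is ensuring that the vertex-wise decomposition really accounts for every direct summand of the entire chain complex $Kar[[D]]$: because the functor $Cob^{3}_{/l} \hookrightarrow Kar(Cob^{3}_{/l})$ is additive and the Bar-Natan differentials are saddle cobordisms, the differentials only determine where the colored summands get sent, not whether they appear, so counting at the level of objects coincides with counting direct summands of the complex.
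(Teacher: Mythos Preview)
There is a genuine gap: your count $\sum_{s} 2^{k_s}$ enumerates colored resolutions, not direct summands of $Kar[[D]]$ as a chain complex, and it does not equal $|Luc(g)|$. A Lucas-Coloring is by definition (Section~2.2) an edge $2$-coloring of $g$ subject to a local Fibonacci condition at each vertex, not a pair consisting of a smoothing and a circle-coloring; the sentence ``by the definition of Lucas-Coloring, an element of $Luc(g)$ is precisely a choice of smoothing at each vertex of $g$ together with a red/green coloring of each resulting circle'' is simply not the definition in force. For the Hopf link projection (two $4$-valent vertices joined by four parallel edges) one has $\sum_s 2^{k_s}=4+2+2+4=12$, whereas there are exactly $6$ Lucas-Colorings, so the two quantities you equate are different already in the smallest nontrivial example.

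The issue you wave away in your last paragraph is exactly the content of the theorem. In $Kar(Cob^3_{/l})$ a saddle between same-colored circles is nonzero (indeed an isomorphism), so distinct colored resolutions are linked by nonzero differentials and must lie in the same indecomposable sub-complex; counting at the level of objects does \emph{not} coincide with counting direct summands of the complex. The paper's bijection works one level up, at the level of edge colorings: a Lucas-Coloring determines the color of every half-edge at every vertex; at a monochromatic vertex (all four half-edges the same color) either smoothing is admissible and the two resulting colored resolutions are connected by a nonzero saddle, while at a two-and-two vertex only the color-separating smoothing is admissible. Thus one Lucas-Coloring indexes an entire sub-complex of $Kar[[D]]$ (of size $2^{m}$ where $m$ is the number of monochromatic vertices), and conversely the common edge coloring can be read off any object in a given summand. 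This grouping of colored resolutions into sub-complexes is the substance of the proof, not a tautology.
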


Surprisingly, we also recover ASM as a special case of $Lucas-Coloring$. We prove the following theorem: 
\begin{theorem}\label{3}
    There is an one to one correspondence between the $n \times n$ alternating sign matrices $\mathcal{A}_{n}$ and the restricted $Lucas-Colorings$ (see \ref{Restricted} for the proper definition) of the $n \times n$ grid graphs $G_{n}$. 
\end{theorem}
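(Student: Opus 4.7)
The plan is to construct an explicit bijection $\Phi : \mathcal{A}_n \to \{\text{restricted Lucas-colorings of } G_n\}$ by factoring through the classical bijection between $n \times n$ alternating sign matrices and configurations of the six-vertex model with domain wall boundary conditions (DWBC) on the $n \times n$ grid. Since the six-vertex model is a local vertex model on a 4-regular grid, and, by Theorem \ref{2}, Lucas-colorings also record local state data on 4-regular graphs coming from the Karoubi envelope, these two flavors of ``colored grid state'' are natural candidates to be compared head-to-head. The whole argument is thus local-to-global: first identify the local states, then check that the global ``restricted'' condition matches DWBC, then import the known bijection.

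Step one is to unpack the definition of restricted Lucas-coloring in the case $g = G_n$. Each interior vertex of $G_n$ is $4$-valent, so the local datum consists of a choice of resolution (vertical vs.\ horizontal pair-up of the four incident edges) together with a red/green labeling of the resulting arcs, constrained by color propagation along the edges of $G_n$ and by the relations of Figure \ref{karubi}. By direct inspection of those relations, I would check that exactly six inequivalent local states survive the Karoubi identifications at each interior vertex, and match them to the six vertex types of the six-vertex model: two of them correspond to ASM entries $\pm 1$, and the remaining four correspond to entries $0$. The weight $2^{N_+}$ or $2^{N_-}$ that appeared for Aztec diamonds in Theorem $1$ should emerge here naturally from the two color-symmetric resolutions available at each $\pm 1$ vertex.

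Step two is to verify that the global ``restricted'' condition reproduces DWBC. Since the boundary vertices of $G_n$ have degree strictly less than four, Definition \ref{Restricted} must prescribe how to close off the dangling half-edges; I expect the definition is tailored so that the top and bottom rows are forced to carry one uniform color and the left and right columns the other, which is precisely DWBC translated into the red/green language. Composing the resulting local-state-to-six-vertex correspondence with Kuperberg's six-vertex $\leftrightarrow$ ASM bijection then yields the required $\Phi$.

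The main obstacle I anticipate lies in step one, namely showing that the color-propagation rules of $Kar(Cob^{3}_{/l})$ really do force the alternating-sign compatibility at the global level, and nothing weaker. Concretely, one must rule out color-consistent configurations that would produce, say, two consecutive $+1$ entries along a single row of the corresponding matrix. This requires a careful case analysis of the relations in Figure \ref{karubi} at each of the six local vertex types, together with an inductive propagation of colors outward from a corner using the boundary data fixed in step two; the heart of the argument is that, once one chooses a row's leftmost color, the ice-rule forces a unique alternation between red and green arcs as one moves rightward, matching the ASM row condition. Once this local-to-global consistency is established, injectivity and surjectivity of $\Phi$ are immediate from the one-to-one matching of local states, and Theorem \ref{3} follows.
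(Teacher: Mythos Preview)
Your plan has a concrete gap in Step one: there is no uniform local dictionary between six-vertex states and Lucas-coloring states of the kind you anticipate. In particular, your claim that ``two of them correspond to ASM entries $\pm 1$'' is false. In a restricted Lucas-coloring the two monochrome local patterns (all four incident edges green, or all four red) are exactly the states that occur at nonzero ASM entries, but \emph{either} monochrome pattern can sit over a $+1$ or a $-1$; the sign is not recorded locally. The paper says this explicitly when comparing with square ice in the subsection following the proof: ``$+1$ and $-1$ can correspond to either of the first two colorings.'' One way to see why no uniform arrow-to-color map on edges can work: along a row of the six-vertex model the horizontal arrow direction \emph{flips} precisely at nonzero ASM entries and is preserved at zeros, whereas in the Lucas-coloring the horizontal color \emph{stays the same} precisely at nonzero entries and changes at zeros. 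The two edge processes are therefore out of phase by a shift depending on the column index, so any translation between them is position-dependent, not a fixed matching of the six local types.

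Your Step two expectation is also wrong. The restricted boundary condition (see \ref{Restricted}) forces the colors of the boundary edges on each side to \emph{alternate}, with certain corner pairs agreeing and a normalization at vertex $1$; it does not force one uniform color on horizontal boundary edges and another on vertical ones, so it is not DWBC rewritten in red/green.

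The paper's proof does not pass through the six-vertex model. From an ASM $A$ it uses the mod-$2$ reduction $A_2$: along each horizontal line $H_i$ the color changes across vertex $k$ exactly when $(A_2)_{ik}=0$ and stays the same when $(A_2)_{ik}=1$, and similarly on vertical lines; a parity argument on the partial row-plus-column path $A(ij)$ shows the horizontal and vertical colorings agree at every interior vertex. The inverse reads off the zeros locally (the four mixed patterns in Figure~\ref{lucascoloring}) and then assigns signs $(-1)^{l+1}$ to the remaining positions along each row, with a separate parity check that the row-built and column-built matrices coincide. The sign information is thus reconstructed globally, which is exactly why your local factoring through square ice cannot go through as written.
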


 This in turn, provides a fresh combinatorial perspective of ASM as the $Lucas-Coloring$ of a certain graph. The $Lucas-Coloring$ also allows us to provide a closed formula for the enumeration of problem $1.5$ of \cite{MR1940333} which is reminiscent of Theorem $1$. In this paper, we have proven the following theorems: 
  \begin{theorem}\label{4}
     For each natural number $a$, let $T_a$ (see Figure \ref{difficult diagram}) denotes the configuration described in \cite{MR1940333}(see Problem $1.5$, p. $207$ ). There exists a collection of planar graphs $\{t_{a} \}_{a \in \mathbb{N}}$ such that the following holds: \[ \# \text{lozenge tiling of}\, (T_{a})  = \sum_{A \in Luc(t_a)} 2^{Sp(A)},  \] where $Luc(t_a)$ denotes the collection of all $Lucas-Colorings$ of $t_{a}$ and $Sp(A)$ denotes the number of special vertices in the $Lucas-Coloring$ $A$. 
 \end{theorem}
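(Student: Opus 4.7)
The plan is to deduce Theorem \ref{4} by combining two reductions. The first reduction is classical: for a simply-connected region $T$ in the triangular lattice, lozenge tilings of $T$ are in bijection with perfect matchings of its weak dual honeycomb graph, which I denote $H(T)$. Applied to $T_a$, this produces a finite subgraph $H(T_a)$ of the honeycomb lattice whose perfect matchings enumerate lozenge tilings of $T_a$. The second reduction is the Lucas-Coloring enumeration formula advertised in the abstract and (by the time the reader reaches this theorem) established earlier in the paper: for every planar graph $g$, a canonical graph $G(g)$ is built satisfying $M(G(g)) = \sum_{A \in Luc(g)} 2^{Sp(A)}$, where $Sp$ counts special vertices of the coloring.

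The first step in my plan is to define the family $\{t_a\}$. Rather than guessing it directly, I would reverse-engineer it from $H(T_a)$: partition the vertices and edges of $H(T_a)$ into the canonical gadgets that the map $g \mapsto G(g)$ attaches to each vertex of $g$, and then contract each gadget to a single vertex. The graph $t_a$ is the resulting planar graph. For the interior of the hexagonal region this should yield a fragment of a $4$-regular triangular grid; the maximal staircase removals at the alternating corners will prune $t_a$ along a diagonal boundary, producing the $a$-th member of the family.

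The second step is verification: I have to check that (i) the reverse-engineering is well-defined, so that every vertex and edge of $H(T_a)$ is accounted for in the gadget decomposition; and (ii) after contraction, $t_a$ is planar and its canonical expansion $G(t_a)$ is isomorphic to $H(T_a)$. Once these are confirmed, chaining the two identities produces
\[
\#\text{lozenge tilings}(T_a) \;=\; M(H(T_a)) \;=\; M(G(t_a)) \;=\; \sum_{A \in Luc(t_a)} 2^{Sp(A)},
\]
which is exactly the statement of the theorem.

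The main obstacle will be the boundary treatment. Removing the maximal staircases from the alternating corners of the hexagon creates an irregular boundary of $H(T_a)$, and it is not immediately obvious that the honeycomb dual of such a cut-off region arises as $G(t_a)$ for an honest planar graph. A careful case analysis of the boundary gadgets will likely be necessary, possibly with a small amount of boundary-local adjustment to the notion of \emph{special vertex} near the staircase edges, engineered so that the sum $\sum_A 2^{Sp(A)}$ is preserved under the adjustment. Once this identification of $t_a$ is made rigorous, the remainder of the argument becomes a routine composition of bijections, entirely parallel in structure to the Aztec/ASM proof referenced as Theorem $1$.
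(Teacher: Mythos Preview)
Your overall strategy matches the paper's: deduce Theorem~\ref{4} from the general formula $M(G(g))=\sum_{A\in Luc(g)}2^{Sp(A)}$ (Theorem~\ref{5}) applied to a graph $t_a$ read off from the honeycomb dual $H(T_a)$. The paper itself records the dependency as ``Theorem~\ref{4} is a corollary of Theorem~\ref{5} and Lemma~\ref{important}.''

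The gap in your proposal is precisely that second ingredient. You assert that the reverse-engineered $t_a$ will satisfy $G(t_a)\cong H(T_a)$, and that any residual discrepancy can be handled by a boundary-local adjustment to the notion of special vertex. Neither is right. In the paper's decomposition, $H(T_a)$ breaks into ``special hexagons'' --- genuine $6$-cycles --- linked by bridging edges; a given special hexagon may have only $2$ or $4$ of its six vertices serving as attachment points to neighbouring hexagons. Contracting each special hexagon to a point yields $t_a$, but then the construction $g\mapsto G(g)$ replaces a degree-$d$ vertex by the polygon $P_d$. A degree-$2$ or degree-$4$ vertex of $t_a$ therefore becomes a $2$-cycle or $4$-cycle in $G(t_a)$, not a hexagon. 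Hence $G(t_a)\not\cong H(T_a)$, and your verification step~(ii) fails as stated.

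What closes the gap is not a tweak to $Sp$ but Lemma~\ref{important} (and its Corollary~1): subdividing an edge between two distinguished vertices by an \emph{even} number of new degree-$2$ vertices leaves the state-sum decomposition, and hence the matching count, unchanged. Applied to a special hexagon with $d\in\{2,4\}$ distinguished vertices, this shows its state sum coincides with that of $P_d$, so $M(H(T_a))=M(G(t_a))$ even though the underlying graphs differ. Once you insert this lemma between your two reductions, the chain
\[
\#\text{lozenge tilings}(T_a)=M(H(T_a))=M(G(t_a))=\sum_{A\in Luc(t_a)}2^{Sp(A)}
\]
is valid; without it, the middle equality is unjustified.
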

  We also prove a theorem in the general direction.  
 \begin{theorem}\label{5}
     For any planar graph $g$, there exists a canonically defined graph $G$ with the following equality 
     \[ M(G) = \sum_{A \in Luc(g)} 2^{Sp(A)}, \]
     where $Luc(g)$ denotes the collection of $Lucas-Coloring$ of the planar graph $g$ and $Sp(A)$ denotes the number of special vertices in a $Lucas-Coloring$  $A$.  
 \end{theorem}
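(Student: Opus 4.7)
The plan is to prove Theorem \ref{5} via an explicit local gadget construction. For each vertex $v$ of the planar graph $g$, I would introduce a small ``vertex gadget'' $\gamma_v$ whose combinatorics encodes the local structure of a Lucas-Coloring at $v$, and then glue these together using ``edge gadgets'' $\epsilon_e$ along the edges of $g$ to produce the graph $G$. The construction is canonical because the gadgets depend only on the local data of $g$: the valence of $v$ together with the cyclic order of incident edges given by the planar embedding, and the pair of endpoint valences for each edge.

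First I would fix the vertex and edge gadgets precisely, with the design dictated by the bi-colored theory inside $Kar(Cob^{3}_{/l})$ described in Figure \ref{karubi}. A Lucas datum at a vertex should record which of the red/green smoothings the local strands are forced into, and the gadget should have the property that its boundary-compatible perfect matchings biject with the allowed local data, with exactly one such matching per datum at an ordinary vertex and exactly two per datum at a special vertex. The edge gadget is then a ``transmission'' gadget, chosen so that its matchings enforce consistency of the color information coming from the two adjacent vertex gadgets.

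Next I would establish the two directions of the correspondence. Given a perfect matching $M$ of $G$, restricting $M$ to each $\gamma_v$ produces a local pattern, which translates back to a Lucas datum at $v$; the key lemma is that the compatibility built into $\epsilon_e$ guarantees these local data assemble into a single element $A_M \in Luc(g)$. Conversely, for fixed $A \in Luc(g)$, the number of perfect matchings $M$ of $G$ lying over $A$ factors as a product over vertices of the number of local gadget completions realizing $A|_v$, which by construction equals $2$ when $v$ is special and $1$ otherwise. Hence
\[ |\{ M \text{ a perfect matching of } G : A_M = A \}| = 2^{Sp(A)}, \]
and summing over $A \in Luc(g)$ produces the desired identity.

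The step I expect to be the main obstacle is the global consistency claim: showing that an arbitrary perfect matching of $G$ does give rise to data satisfying the Lucas relations everywhere, not merely a locally sensible but globally inconsistent labeling. This forces a delicate choice of edge gadget — essentially a parity-enforcing block that forbids mismatched colors at the two endpoints — and a careful verification that no ``wild'' matching of $G$ violates the vertex rules of Lucas-Coloring. Once the gadgets are pinned down, the remaining count of $1$ versus $2$ at ordinary and special vertices, and the global bijection in the preceding paragraph, should reduce to a bounded finite case analysis on the small gadgets. Theorem \ref{4} would then follow as the specialization of this construction to the hexagon-with-staircase regions $T_a$, by verifying that the canonically associated graph $t_a$ indeed has $M(G) $ equal to the number of lozenge tilings of $T_a$.
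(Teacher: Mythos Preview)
Your high-level strategy matches the paper's: build $G$ from local gadgets at the vertices of $g$, then establish a correspondence between perfect matchings of $G$ and Lucas-colorings of $g$ so that each $A\in Luc(g)$ has exactly $2^{Sp(A)}$ preimages. But you have not actually identified the gadget, and the complications you anticipate (separate edge gadgets, a ``parity-enforcing block'', global consistency as the main obstacle, input from the Karoubi envelope) are all unnecessary once the right vertex gadget is chosen.

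The paper's construction is much simpler than what you are sketching. The vertex gadget at a vertex $v$ of degree $n$ is just the polygon (cycle) graph $P_n$: take a small disk around $v$, place the $n$ boundary intersection points as the vertices of $P_n$, and connect consecutive ones by the polygon edges. There is no edge gadget at all --- the portion of each edge of $g$ lying outside the disks is simply a single edge of $G$ joining two polygon vertices. With this choice, the bijection is immediate: a perfect matching either uses that connecting edge (color the edge of $g$ by $y$) or does not, in which case both endpoints must be matched inside their respective polygons by a polygon edge (color the two consecutive incident edges by $n$). Locally at $v$ this is exactly the Fibonacci/Lucas condition, so global consistency is automatic and requires no further argument. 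The multiplicity $2$ at a special vertex comes from the elementary fact that the even cycle $P_{2k}$ has exactly two perfect matchings, so when all $n$ half-edges at $v$ are colored $n$ there are two ways to complete the matching inside that polygon.

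So the gap in your proposal is not a wrong idea but a missing one: you have deferred the choice of gadget and thereby manufactured difficulties that the polygon gadget dissolves. Once you write down $P_n$ as the vertex gadget and drop the edge gadget entirely, the ``delicate choice'' and the ``bounded finite case analysis'' collapse to a two-line verification, and no input from $Kar(Cob^{3}_{/l})$ is needed.
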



\begin{remark}
Theorem \ref{4} is a corollary of Theorem \ref{5} and Lemma \ref{important} in this paper. Although, there is a direct bijective proof of Theorem \ref{5}, our paper establishes a framework for understanding how global problems, such as the enumeration of perfect matchings in a graph $G$, can be effectively addressed by analyzing the local structures of G. The simple bijection in Theorem \ref{5} exists because the local pieces are the Polygon graph $P_{n}$ (see \ref{Polygongraph}).  We take a detour in section $7$ to explain a few computations involving the Matching algebra $\mathcal{M}$ and the state sum decomposition method as developed in \cite{PaulSaikia}. We also provide an algebraic proof of Theorem \ref{4} and Theorem \ref{5} using the state sum decomposition method. 
\end{remark}

\textbf{Outline of the paper:} In Section $2$ we define the combinatorial object called the $Lucas-Coloring$ associated with a planar graph. We also define a few important quantities associated with $Lucas-Coloring$ that would be relevant to us in later sections. We end the section with some elementary dualities that arise canonically from the $Lucas-Colorings$ for planar graphs with even degrees. In section $3$ we revisit the category $Cob^{3}_{\l}$ and its Karoubi envelope $Kar(Cob^{3}_{/l})$ and prove the Theorem \ref{2}. In section $4$ we show the connection between ASM and the $Lucas-Colorings$ and give a proof for Theorem \ref{3}. In Section $5$ we discuss the problem that was proposed by Mihai Ciucu and Christian Krattenthaler in the paper \cite{MR1940333}. This problem was our original motivation for defining the $Lucas-Coloring$ associated with a planar graph.   In Section $6$ we prove theorem \ref{5} by providing a direct bijection. In Section $7$ we provide an algebraic proof of Theorem \ref{4} and Theorem \ref{5}. In the appendix we discuss the Matching Algebra $\mathcal{M}$ and the state sum decomposition associated to a graph $G$ with a collection of distinguished vertices. 

\section{$Lucas-Coloring$ of a planar graph $G$} 
In this section we define $Lucas-Coloring$ which is the central object of this paper. To get a motivation for the definition we revisit the Fibonacci-Sequence.  
\subsection{Fibonacci Sequence:}
An ordered sequence of length $k$ consisting of elements from $\{ y ,n \}$ is called a Fibonacci sequence if all occurrence of $n$ appear in consecutive pair and the $n$'s in the sequence can be partitioned into the block of $\left( n, n \right)$.  We use the symbol $\mathcal{F}_{k}$ to denote the set of Fibonacci sequences of length $k$. For example: 
\begin{align*}
    \mathcal{F}_{1} &= \{ (y)  \} \\
    \mathcal{F}_{2} &= \{ (yy), (nn) \} \\ 
    \mathcal{F}_{3} &= \{ (yyy), (nny), (ynn)  \} \\
    \mathcal{F}_{4} &= \{ (yyyy), (nnyy), (ynny), (yynn), (nnnn) \}
\end{align*}
We can easily see that the cardinality of $ |\mathcal{F}_{k}|$ is the $k-$ th Fibonacci number $F_{k}$ defined by the recursion $F_1=1, F_2=2$ and $F_{n+1}= F_{n}+ F_{n-1}$ for all $n \geq 3$. 

\subsection{Lucas-Coloring:} Given a planar graph $G=(V,E)$ a $Lucas-Coloring$ is an edge coloring of $G$ by the colors $\{ y,n \}$ satisfying the condition of Fibonacci sequence for all vertices $v \in V$. More precisely for each vertex $v \in V$, let $E_{v} \subset E$ represents the set of edges incident to $v$. In $Lucas-Coloring$ of $G$, the $n$-colored edges must appear consecutively within $E_{v}$ and these $n$-colored edges can be partitioned into pairs of consecutive edges. We provide some examples in Figure \ref{Example1} and Figure \ref{Example2}.  
\begin{figure}[!htb]
\begin{center}
    \scalebox{1}{\hspace*{5cm}\includegraphics{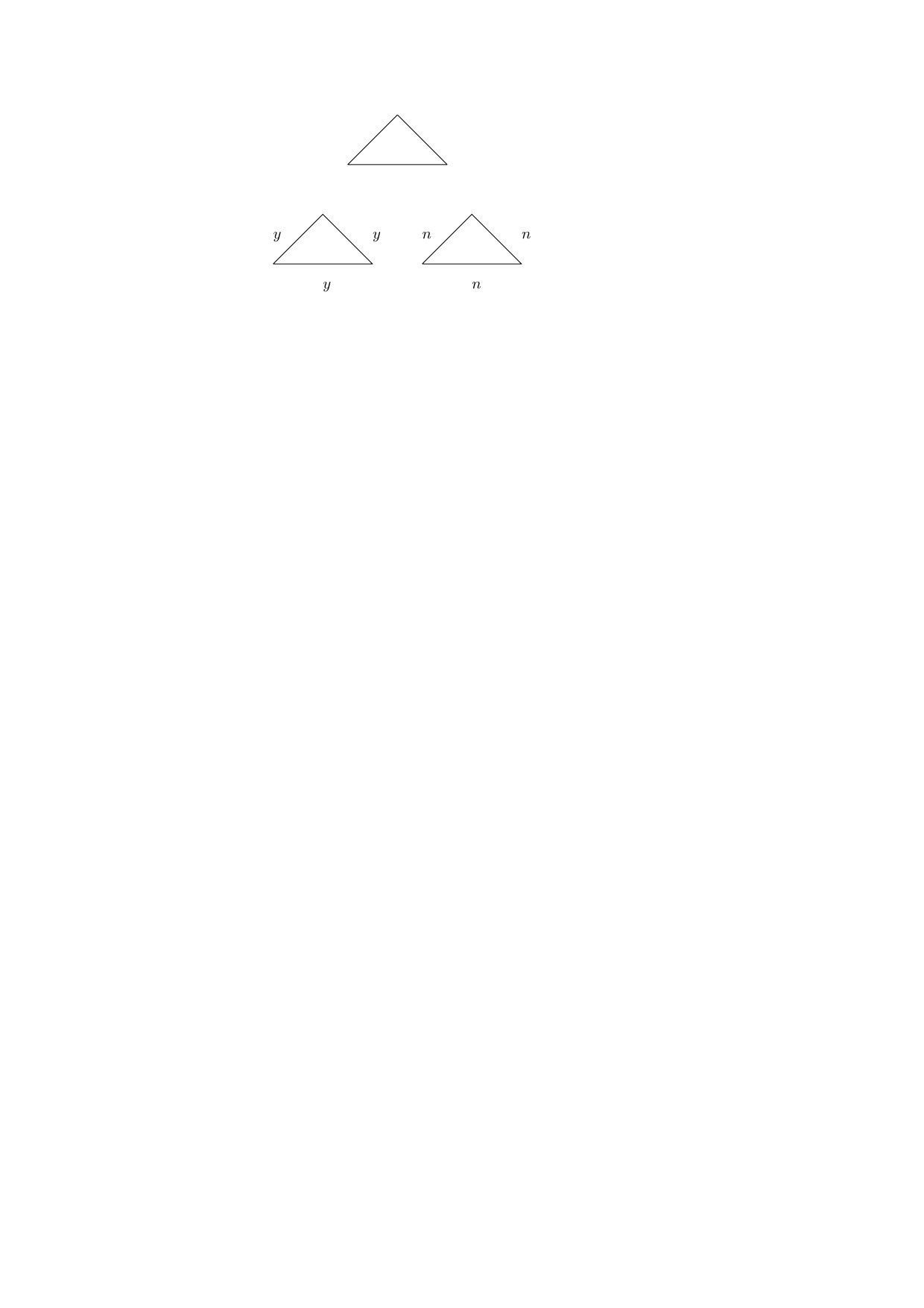}}
\caption{ For $\triangle$ graph there are only two  $Lucas-Colorings$.  }
\label{Example1}
\end{center}
\end{figure}

\begin{figure}[!htb]
\begin{center}
    \scalebox{1}{\hspace*{-0.3cm}\includegraphics{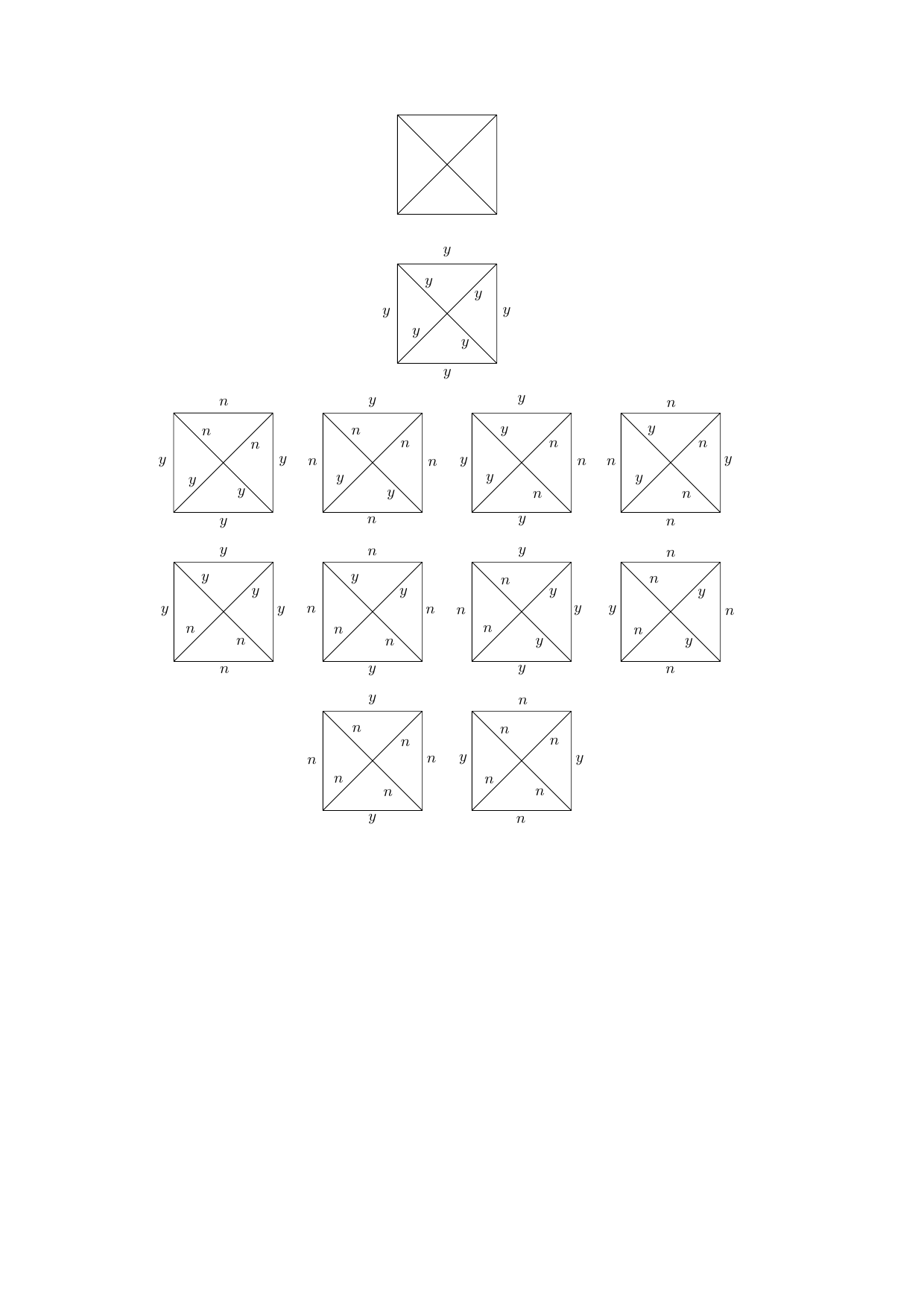}}
\caption{ For Wheel graph $W_{5}$ graph there are $11$  $Lucas-Colorings$.  }
\label{Example2}
\end{center}
\end{figure}
\begin{remark}
    We use the term "Lucas-Coloring" because the number of partial matchings of the Polygon graph $P_{n}$ is exactly equal to the quantity $L_{n}-1$ where the Lucas number $L_n$ is defined by the same recursion as the Fibonacci numbers $L_{n+1}=L_n+L_{n-1}$ but with a different initial conditions $L_{0}=2$ and $L_{1}=1$. The polygon graphs $P_{n}$ are the key ingredients for the construction of $g$ in Theorem \ref{5}. 
\end{remark}

\subsection{Special vertex}
For planar graph $G=(V,E)$ and a given $Lucas-Coloring$ of $G$, a vertex $v \in V$ is called a special vertex if all of its incident edges are colored by $n$.  For example in Figure $\ref{Example1}$  there are three special vertices in the second $Lucas-Coloring$ of $\triangle$. However, in the first $Lucas-Coloring$ there is no special vertex. In Figure \ref{Example2} only the last two $Lucas-Colorings$ has a single special vertex which is the center of the $W_{5}$. Note that for $v \in V$ to be a special vertex $deg(v)$ must be an even number.  For a $Lucas-Coloring$ $A$ of $G$ we define a function $Sp(A)$ as follows: 
\[ Sp(A) :=  \# \text{Special vertices of } A. \]
\subsection{An important statistic} 
For a planar graph $G$, let $Luc(G)$ denotes the collection of all $Lucas-Colorings$ of $G$. We define an important statistic on $Luc(G)$ as follows: 
\[ m(G) := \sum_{A \in Luc(G)} 2^{Sp(A)}. \]

\subsection{A duality of $Lucas-Colorings$}
In this subsection we focus on planar graphs with even degrees. Suppose $G$ is a connected planar graph such that every vertex $v \in G$ has even degree. An example of such graphs could be $T_{a}$ as in Theorem \ref{4} (see \ref{identification}). Then any $Lucas-Coloring$ $A$ of $G$ defines another $Lucas-Coloring$ $A^{'}$ by swapping the colors $y$ and $n$. This is a fixed point free $\mathbb{Z}_{2}$ involution on the collection of $Lucas-Colorings$ of $G$.  Thus, we get the following lemma: 
\begin{lemma}
    If $G$ is a connected planar graph with every vertex $v \in G$ has even degree, then the collection of $Lucas-Colorings$, $Luc(G)$ of $G$ has even cardinality. 
\end{lemma}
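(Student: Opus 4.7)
The plan is to formalize the color-swap involution that the author sketches in the paragraph immediately preceding the lemma, and then invoke the standard fact that a finite set admitting a fixed-point-free $\mathbb{Z}_2$-action must have even cardinality.

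First I would define $\sigma : Luc(G) \to Luc(G)$ by $\sigma(A)(e) = y$ if $A(e) = n$ and $\sigma(A)(e) = n$ if $A(e) = y$, for every edge $e \in E$. The main verification is that $\sigma(A)$ is actually a Lucas-Coloring under the hypothesis that every vertex has even degree. Fix $v \in V$ and consider the cyclic order on $E_v$ induced by the planar embedding. By the Lucas-Coloring axiom at $v$, the set of $n$-colored edges in $A$ forms a consecutive arc $I_v$ in the cyclic order on $E_v$, of even cardinality. The set of $n$-colored edges of $\sigma(A)$ at $v$ is the complementary arc $E_v \setminus I_v$, which is again consecutive in the cyclic order (the complement of a consecutive arc in a cycle is consecutive), and whose cardinality is $\deg(v) - |I_v|$. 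Since $\deg(v)$ is even by hypothesis and $|I_v|$ is even by the Lucas axiom, this complement has even cardinality, hence admits a partition into consecutive pairs. Thus $\sigma(A)$ satisfies the Lucas condition at every vertex.

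Next I would observe that $\sigma \circ \sigma = \mathrm{id}$ directly from the definition, so $\sigma$ is an involution, and in particular a bijection of $Luc(G)$ with itself. It remains to check that $\sigma$ has no fixed points. If $A = \sigma(A)$, then for every edge $e$ we would need $A(e)$ to equal its swap, which is impossible for any individual edge. Since $G$ is connected and (implicitly) has at least one edge, there exists at least one $e \in E$, and this already forces $A \neq \sigma(A)$; thus $\sigma$ is fixed-point free.

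Finally, the orbits of $\sigma$ partition $Luc(G)$ into sets of size $2$, yielding $|Luc(G)| = 2 \cdot |Luc(G)/\sigma|$, which is even. I do not anticipate a real obstacle here; the only subtle point is confirming that the complement of a consecutive cyclic arc is consecutive and of the correct parity, both of which rely crucially on the even-degree hypothesis at every vertex.
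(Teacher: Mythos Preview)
Your proposal is correct and takes essentially the same approach as the paper: the paragraph immediately preceding the lemma already asserts that the color swap $A \mapsto A'$ is a fixed-point-free $\mathbb{Z}_2$-involution on $Luc(G)$, and you have simply supplied the verifications (closure under swap via the parity/complement-arc argument, involutivity, and absence of fixed points) that the paper leaves implicit. There is nothing to add.
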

Similar to $Sp(A)$ we can define a dual function $Sp^{'}(A)$ which counts the number of vertices whose all of its incident edges are colored by $y$. Because of the duality we also have the following lemma: 
\begin{lemma}
  If $G$ is a connected planar graph with every vertex $v \in G$ has even degree, then we have the following equality: 
  \[\sum_{A \in Luc(G)} 2^{Sp(A)} = \sum_{A \in Luc(G)} 2^{Sp^{'}(A)} \]
\end{lemma}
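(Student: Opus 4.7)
The plan is to promote the color-swap involution $\sigma : A \mapsto A'$ introduced in the previous lemma into a bijection that interchanges the statistics $Sp$ and $Sp'$ pointwise. Once this has been established, a change of index $B = \sigma(A)$ on one side of the desired identity immediately gives
\[ \sum_{A \in Luc(G)} 2^{Sp(A)} \;=\; \sum_{A \in Luc(G)} 2^{Sp'(\sigma(A))} \;=\; \sum_{B \in Luc(G)} 2^{Sp'(B)}, \]
which is exactly the claim. So the entire content of the proof is in setting up $\sigma$ and checking that it swaps the two statistics.

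First I would verify that $\sigma$ is well-defined, i.e.\ that swapping the two colors carries a Lucas-Coloring to another Lucas-Coloring. At each vertex $v$, the Lucas condition says that, in the cyclic order on $E_v$ induced by the planar embedding, the $n$-colored edges form a single contiguous arc of even length. After the swap, the new $n$-edges are exactly the old $y$-edges, i.e.\ the complementary arc in the cyclic order on $E_v$. The complement of a single contiguous arc in a cycle is again a single contiguous arc, and since $\deg(v)$ is even by hypothesis and the old $n$-arc had even length, the new $n$-arc has even length as well. The two degenerate cases, in which $v$ is special or dually special in $A$, are covered automatically: the empty arc and the full $E_v$ both vacuously satisfy the Lucas condition. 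Thus $\sigma(A) \in Luc(G)$ for every $A \in Luc(G)$, and $\sigma$ is a fixed-point-free involution as already noted.

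Second I would check that $Sp(A) = Sp'(\sigma(A))$ for all $A \in Luc(G)$. This is transparent from the definitions: a vertex $v$ is special in $A$ precisely when every edge of $E_v$ is $n$-colored in $A$, which happens precisely when every edge of $E_v$ is $y$-colored in $\sigma(A)$, which is exactly the condition defining the contribution of $v$ to $Sp'(\sigma(A))$. Combining this pointwise identity with the re-indexing displayed above yields the lemma.

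I do not foresee a real obstacle; the crux is simply the observation that the hypothesis of universally even degree restores the symmetry between $y$ and $n$ that the Lucas condition at first sight appears to break. The only subtlety worth flagging explicitly is the interpretation of ``contiguous arc'' in the degenerate cases (empty and full), and these are dispatched by a one-line parity remark.
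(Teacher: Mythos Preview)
Your argument is exactly the paper's: the lemma appears there with no proof beyond the line ``Because of the duality we also have the following lemma,'' and you have simply made that duality explicit by checking that the color-swap $\sigma$ lands in $Luc(G)$, observing $Sp(A)=Sp'(\sigma(A))$, and reindexing.

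One point to be alert to: your verification that $\sigma$ is well defined rests on the assertion that the $n$-edges at each vertex form a \emph{single} contiguous cyclic arc of even length. This is the reading of the paper's clause ``the $n$-colored edges must appear consecutively within $E_v$'' under which the swap is an involution, and it is the reading the paper must intend for Lemmas~1 and~2. Note, however, that it is stricter than the literal Fibonacci-sequence condition invoked just before (and than the state-sum expansion $v_{P_n}=\mathscr{F}_n+n\otimes\mathscr{F}_{n-2}\otimes n$ used later): those would also admit patterns such as $n\,n\,y\,n\,n\,y$ at a degree-$6$ vertex, whose swap $y\,y\,n\,y\,y\,n$ has isolated $n$'s and is not Lucas. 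So your single-arc interpretation is the one needed here, but the paper's text is not entirely consistent on this point.
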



\section{Topological Connection} 
To motivate the construction of "Lucas-Coloring", we start with a $4-$ regular planar graph $G$ and let $\mathcal{G}$ denotes the collection of Link diagrams $D$ whose projection onto $XY-$ plane $\pi(D)=G$. Observe that $\mathcal{G}$ is non-empty. 

Since, changing over crossing by under crossing of $D$ does not change the number of link components we immediately see the following lemma: 
\begin{lemma}
    For all link diagram $D \in \mathcal{G}$, the number of link components is an invariant.  
\end{lemma}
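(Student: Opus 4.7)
The plan is to show that the number of link components of any $D \in \mathcal{G}$ is determined intrinsically by the 4-regular planar graph $G$ itself, which immediately implies the invariance across $\mathcal{G}$.

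First, I would make precise the combinatorial quantity attached to $G$ that will be shown to equal the component count. At each vertex $v$ of a 4-regular planar graph, the four incident edge-ends have a cyclic order coming from the planar embedding, and this canonically partitions them into two \emph{opposite pairs} (the pair of edge-ends separated by the other pair in the cyclic order). Define an equivalence relation $\sim$ on the set of edges of $G$ to be generated by: two edges $e, e'$ incident to a common vertex $v$ are related if their ends at $v$ form an opposite pair. Let $c(G)$ denote the number of equivalence classes.

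Next, I would argue that for any $D \in \mathcal{G}$ the number of link components equals $c(G)$. At each crossing of $D$, one strand passes through by connecting two opposite ends, and the other strand passes through by connecting the other two opposite ends; this connectivity pattern at the vertex is a purely planar datum and does \emph{not} depend on which strand is the overstrand. Therefore, if one walks along a component of $D$ and follows the arcs through every crossing, the sequence of edges of $G$ traversed is exactly a single $\sim$-equivalence class; conversely, every $\sim$-class corresponds to a unique component of $D$. This gives a bijection between link components of $D$ and classes under $\sim$, so the number of components equals $c(G)$.

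Since $c(G)$ is defined purely from $G$ with no reference to over/under crossing information, the count is the same for every $D \in \mathcal{G}$, which is the desired invariance. A second, equivalent, way to phrase the proof that I would note briefly is the \textbf{crossing-change argument}: any two diagrams $D, D' \in \mathcal{G}$ differ by a finite sequence of crossing changes, and a single crossing change preserves the component count because the local strand-routing through the crossing disk is unchanged; this gives the same conclusion without invoking the equivalence relation explicitly.

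I do not anticipate a serious obstacle here: the main subtlety is just being careful that ``strand passes straight through'' at a 4-valent crossing is determined entirely by the planar cyclic order at the vertex, which is part of the data of $G \subset \mathbb{R}^2$. Once that observation is in place, the argument is essentially a one-line bijection.
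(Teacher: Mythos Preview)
Your proposal is correct. The paper's own justification is a single sentence preceding the lemma: since changing an over-crossing to an under-crossing does not change the number of link components, the count is the same for all $D \in \mathcal{G}$. This is precisely your second, ``crossing-change'' formulation.

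Your primary argument via the equivalence relation $\sim$ on edges (generated by opposite pairs at each 4-valent vertex) is a more explicit, structural version of the same observation: it identifies the invariant $c(G)$ concretely rather than merely asserting invariance under crossing changes. Both approaches rest on the same local fact that strand-routing through a crossing is determined by the planar cyclic order and not by the over/under data; the paper simply states this consequence, while you unpack it into a bijection between components and $\sim$-classes. Your version has the minor advantage of naming the invariant intrinsically in terms of $G$, but for the purposes of the paper the one-line crossing-change remark suffices.
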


The total number of link components of $D \in \mathcal{G}$ is encoded in the Khovanov-Lee complex as its total dimension. In particular we have the following theorem: 
\begin{theorem}[Theorem 4.2 \cite{MR2173845}, Prop 1.3 \cite{MR2253455}]
    If $D$ is a $k$ component link diagram, the total dimension  \[H^{*} (CKh(D), d_{Kh}+ \Phi)\] is given by 
    $2^{k}$. Moreover there is a one to one correspondence between the orientations of $D$ and the generators of $H^{*} (CKh(D), d_{Kh}+ \Phi)$. 
\end{theorem}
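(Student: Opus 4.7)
The plan is to work entirely in the Karoubi envelope $Kar(Cob^{3}_{/l})$ and decompose the Khovanov--Lee complex into one-dimensional pieces, one per orientation of $D$. The engine is the bi-colored splitting of a circle exhibited in Figure \ref{karubi}: inside $Kar(Cob^{3}_{/l})$ every unknotted circle decomposes as a direct sum of a ``red'' and a ``green'' summand, coming from the two orthogonal idempotents built from $\alpha=1$ in $Cob^{3}_{/l}$. These idempotents are precisely Lee's eigenvectors of multiplication by $x$ in $\mathbb{Q}[x]/(x^{2}-1)$, so computing with them is the categorified version of Lee's change of basis.

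First I would spell out this decomposition at the level of a single complete resolution of $D$. A resolution with $m$ circles splits in $Kar(Cob^{3}_{/l})$ into $2^{m}$ summands, one for each assignment of colors $\{\text{red},\text{green}\}$ to the circles. Next I would analyze what the Bar-Natan saddle cobordism (the component of $d_{Kh}+\Phi$ between two adjacent resolutions) does on these colored summands. Using the relations in Figure \ref{karubi} --- in particular the vanishing of any cobordism that has a red cap meeting a green region, and the fact that a saddle between a red circle and a green circle is zero --- one sees that every component of the differential between two differently-colored summands is null-homotopic. What remains is a direct sum of subcomplexes, each of which is indexed by a globally consistent coloring: a recipe that assigns, for every resolution, a color to each circle in a way compatible with every saddle that appears in the cube.

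Then I would identify these globally consistent colorings with orientations of $D$. Given an orientation $o$, perform the oriented (Seifert) resolution at each crossing; the resulting circles are canonically oriented, and I color each clockwise circle red and each counterclockwise circle green (or vice versa). This produces a globally consistent coloring, since the oriented smoothing is distinguished at every crossing and the red/green label at neighboring resolutions is forced. Conversely, a globally consistent coloring forces the oriented smoothing at each crossing (the alternative smoothing would require a forbidden red--green saddle), and the red/green labels recover an orientation of the resulting Seifert circles, hence of $D$. The subcomplex carved out by a single orientation $o$ consists of one object (concentrated in the homological degree of the oriented resolution) with no nontrivial differentials to or from it, so it contributes exactly one generator $s_{o}$ to $H^{*}(CKh(D),d_{Kh}+\Phi)$. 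Since $D$ has $k$ components and each can be oriented independently in $2$ ways, there are $2^{k}$ orientations and hence $2^{k}$ generators, in canonical bijection with orientations.

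The main obstacle, and the step that I would treat most carefully, is the vanishing of the differentials between incompatibly colored summands. Concretely, one has to verify, crossing by crossing, that every saddle between a red-colored resolution and a green-colored one becomes zero in $Kar(Cob^{3}_{/l})$, using only the idempotent relations and the $S$, $T$, $4Tu$ relations from Figure \ref{the relations}. This is a finite local computation at each of the $2$ smoothings of a crossing, but it is where the whole argument rests: once this local check is made, the global decomposition into $2^{k}$ one-dimensional summands indexed by orientations is formal, and the theorem follows.
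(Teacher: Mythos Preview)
The paper does not prove this theorem; it is quoted from Lee \cite{MR2173845} and Bar-Natan--Morrison \cite{MR2253455} and used as background. Your outline is the Bar-Natan--Morrison strategy and is largely on track, but there is a real gap at the step where you identify globally consistent colorings with orientations.

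That identification is false as stated. The direct summands of $Kar[[D]]$ are exactly the $Lucas$-$Colorings$ of the projection $g$ --- this is Theorem~\ref{2} of the present paper --- and $|Luc(g)|$ is in general strictly larger than $2^{k}$. The trouble is the first two patterns in Figure~\ref{lucascoloring}: a crossing at which all four incident arcs carry the same color. Your sentence ``a globally consistent coloring forces the oriented smoothing at each crossing (the alternative smoothing would require a forbidden red--green saddle)'' breaks down exactly there, since at a monochromatic crossing both smoothings connect same-colored arcs and neither is forbidden. The missing ingredient is one you never invoke: a saddle between same-colored circles is not merely nonzero but an \emph{isomorphism} in $Kar(Cob^{3}_{/l})$, so any summand containing a monochromatic crossing is (locally a tensor factor of) a cone on an isomorphism and hence contractible. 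Only the summands with no monochromatic crossing survive in homology, and it is \emph{those} --- not all globally consistent colorings --- that are in bijection with orientations via your clockwise/counterclockwise rule on Seifert circles. Incidentally, the step you flag as the ``main obstacle'' (vanishing of red--green saddles) is the routine one; the invertibility of the monochrome saddle and the ensuing contractibility is where the argument actually lives.
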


Next we describe how to extract another invariant of the family of link diagrams $ D \in \mathcal{G}$. This invariant $|Luc(G)|$ will be described as a numerical invariant of the formal complex $Kar[[D]] \in Kom(Kar(Cob^{3}_{/l}))$ of the link diagram $D \in \mathcal{G}$.  For reader's convenience, in the next subsection we describe the category $Kar(Cob^{3}_{/l})$. 

 \begin{figure}[!htb]
\begin{center}
\scalebox{0.5}{\hspace*{-1cm}\includegraphics{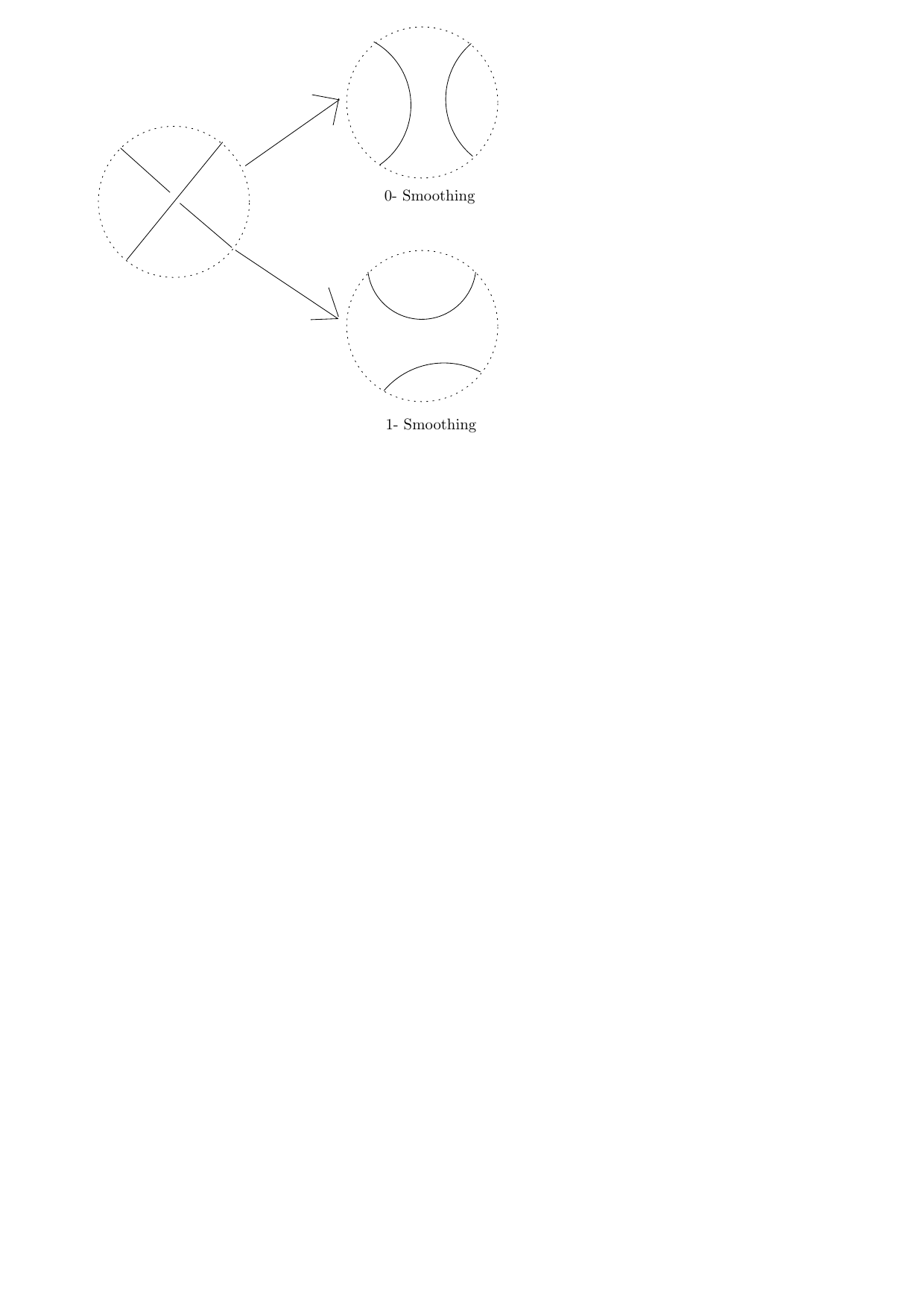}}
\vspace{2cm}\caption{ Resolution of a crossing. }
\label{Resolution}
\end{center}
\end{figure}

\subsection{The Karoubi envelope of the Cobordism category} 
Given any link diagram $D$ with $n$ crossings, we can assign either $0-$ smoothing or $1-$ smoothing by the rule described in diagram \ref{Resolution}. A complete resolution of $D$ is a choice of $0$ or $1$ smoothing for each of its crossing.  Thus, there are $2^{n}$ complete resolutions of $D$. Geometrically, a complete resolution of $D$ is a collection of simple closed curves in $\mathbb{R}^{2}$. We choose a linear order in the collection of crossings of $D$. Then, any complete resolution can be uniquely identified with an element in $\{0,1 \}^{n}$. In particular, the element $(\epsilon_1, \ldots, \epsilon_{n})$ corresponds to the complete resolution where the $i-$th crossing is resolved by $0-$ smoothing when $\epsilon_i=0$ and by $1-$ smoothing when $\epsilon_{i}=1$. For $u,v \in \{0,1 \}^{n}$ we declare \[ u \leq v \iff u_{i} \leq v_{i} \, \, \text{for all}\, \,  i .   \]
Note that $u$ is covered by $v$ i.e. $u \prec  v$ if and only if there exists $k$ such that $u_{i} = v_{i}$ for all $i \neq k$ and $u_{k}=0$ and $v_{k}=1$. For the link diagram $D$, let $D(u)$ denotes the complete resolution defined by the vertex $u \in \{0,1 \}^{n}$. Also let $|u|$ denotes the Manhattan or the $l^{1}$ norm defined by \[ |u|:= \sum_{i=1}^{n}u_{i}.\] 
The Bar-Natan's formal complex $[[D]] \in Kom(Cob^{3}_{/l} (D))$ (Section 3. \cite{MR2174270}) is defined as follows: 
\begin{align*}
   0 \to &[[D]]_{0} \to \cdots  \to [[D]]_{n} \to 0  \\
   &[[D]]_{i}:= \bigoplus_{|u|=i} D(u)
\end{align*}
where $\bigoplus$ denotes the disjoin union. There is a non-zero differential $D(u) \to D(v)$ if and only if $u$ is covered by $v$ in $\{ 0,1 \}^{n}$. Note that changing $0$ smoothing to $1$ smoothing at a singular crossing, geometrically corresponds to either two circles merged to a single circle or one circle split into two circles.  The differential is represented by the unique surface $S(u \to v) \subset \mathbb{R}^{2} \times [0,1] $ with $S(u \to v) \cap \mathbb{R}^{2} \times 0 = D(u) $ and   $S(u \to v) \cap \mathbb{R}^{2} \times 1 = D(v) $ and $S(u \to v )$ has precisely one critical point for the projection map onto the last coordinate. Finally, the signs are modified to turn $[[D]]$ into a chain complex as follows: 
if $u= (\epsilon_1, \cdots, \epsilon_{k-1},0, \epsilon_{k+1}, \cdots, \epsilon_{n})$ and $v= (\epsilon_1, \cdots, \epsilon_{k-1},1, \epsilon_{k+1}, \cdots, \epsilon_{n})$ then define the signature $s(u\to v):= (-1)^{\epsilon_1+\cdots \epsilon_{k-1}}$. Thus, the differential is defined by 
\begin{align*}
    dKh|_{u \to v}: D(u) \to D(v) \\
    dKh|_{u \to v}:= s(u \to v) \cdot S(u \to v) 
\end{align*}
Having defined the complex $[[D]] \in Kom(Cob^{3}_{/l}) $, now we can define its augmentation $ Kar([[D]]) \in  Kom(Kar(Cob^{3}_{/l} ))$. For a complete resolution $D(u)$, let $Kar(D(u))$ denotes the all possible colorings of the closed curves in $D(u)$ by two colors: red and green as in \ref{Karubi.}.  Thus, if $D(u)$ has $k$ circles, then $Kar(D(u))$ has precisely $2^{k}$ summands.  If $u$ is covered by $v$ that is  $u \prec v $ then there is a non-zero differential $Kar(d_{Kh}): KarD(u) \to KarD(v)$ from the summand in $KarD(u)$ to the summand in $KarD(v)$ if and only if the circles of $D(u)$ and $D(v)$ that participate in the differential $dKh|_{u \to v}$ are of the same colors as described in \ref{non-zero.}. 

\begin{figure}[!htb]
\begin{center}
\scalebox{0.5}{\hspace*{0cm}\includegraphics{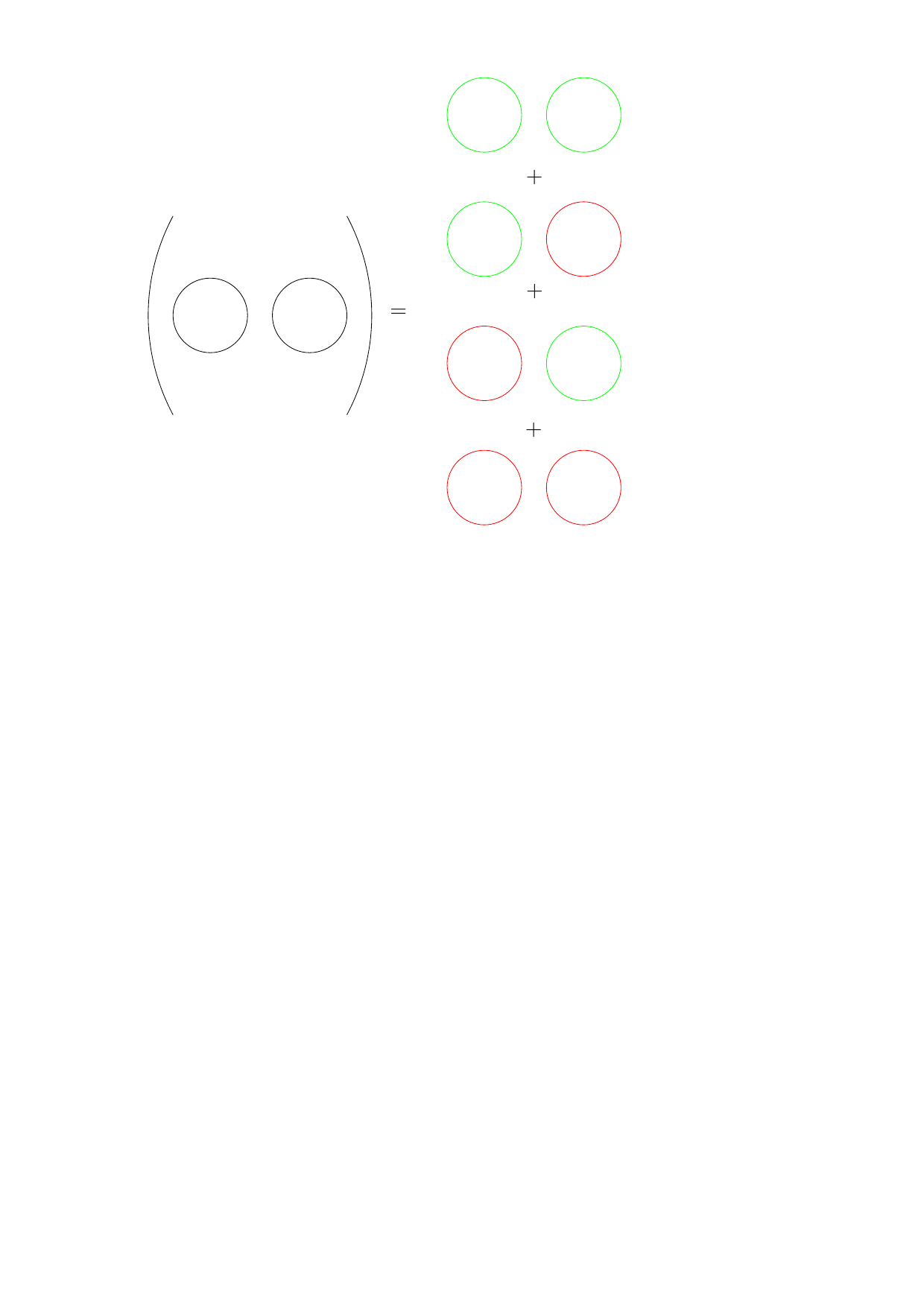}}
\vspace{0cm}\caption{ Karoubification of $D(u)$. }
\label{Karubi.}
\end{center}
\end{figure}
We now prove the theorem \ref{2}: 
\begin{proof}[Proof of Theorem \ref{2}]
We define maps in both directions. 

    \begin{itemize}
        \item (From $Lucas-Coloring$ to the summand of chain complex $Kar[[D]]$) 

        Given a $Lucas-Coloring$ of $G$, we can associate a summand in $Kar[[D]]$ by defining a specific coloring of the circles in $D(u)$  for some $u \in \{0,1\}^{n}$. Then this coloring defines a unique sub-complex in $Kar[[D]]$ which it is part of. Given a vertex of $G$, if it is colored by one of the last four coloring scheme in \ref{lucascoloring} then we define the local smoothing at the vertex that connects the arcs of the same color. On the other hand if it is colored by one of the first two coloring scheme in \ref{lucascoloring}, then any smoothing can be chosen. Observe that, for any two choices of the local smoothings , they are connected by a sequence of non-zero differentials in $Kar[[D]]$. In other words, they are part of the same sub-complex of $Kar[[D]]$. 
        \item (From  a summand of $Kar[[D]]$ to  $Lucas-Coloring$)  
        
        In the other direction, given a direct summand in $Kar[[D]]$ and a non-zero differential in it, the differential must preserve the local picture around the specific vertex in $G$ i.e. either all of the four incident edges are colored red or green. Hence, it defines a unique $Lucas-Coloring$ of $G$. 
    \end{itemize}
\end{proof}
\begin{figure}[!htb]
\begin{center}
\scalebox{0.5}{\hspace*{0cm}\includegraphics{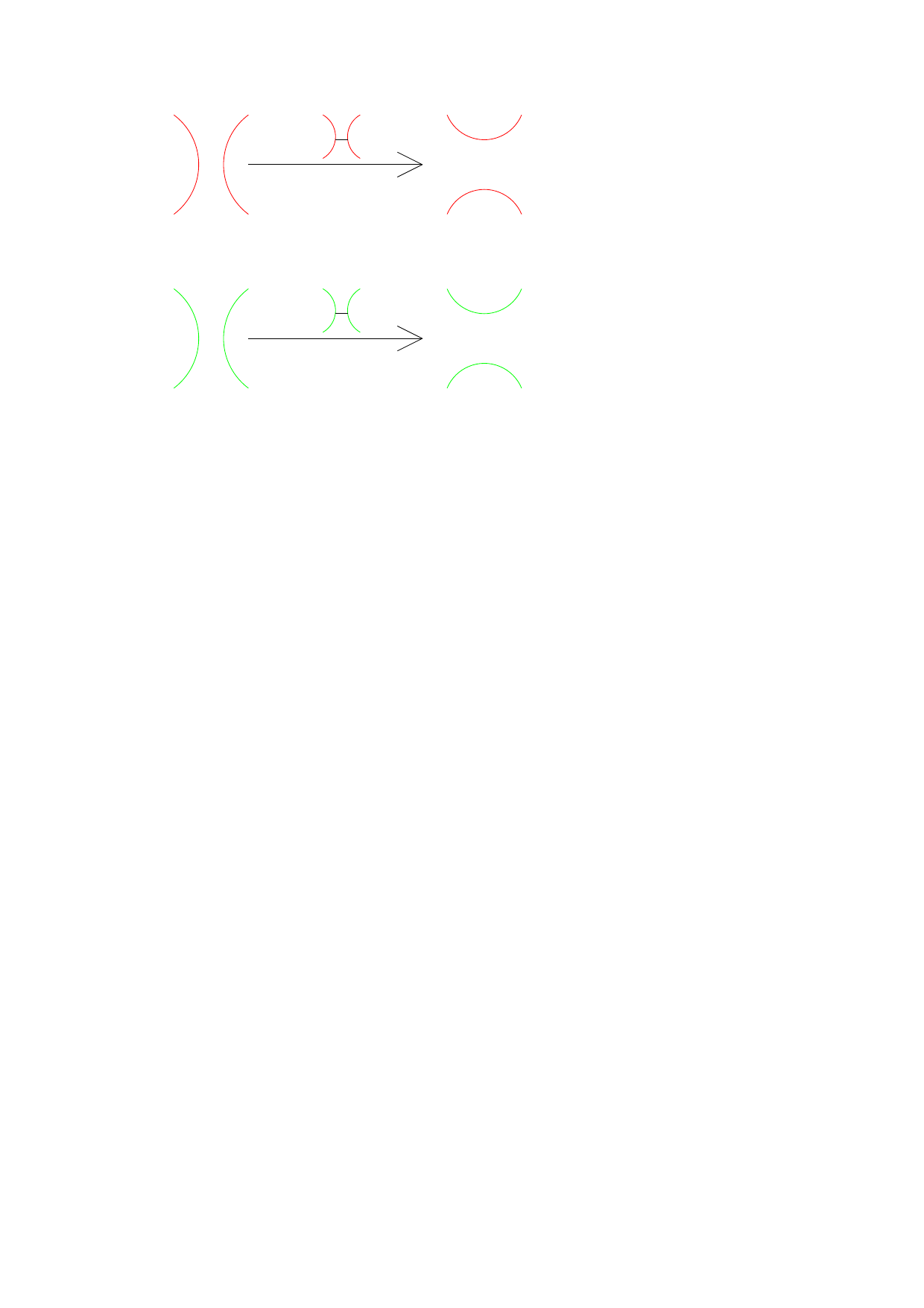}}
\vspace{0cm}\caption{ Non-zero differential in the Karoubi envelope . }
\label{non-zero.}
\end{center}
\end{figure}
\section{Application 1: Connection to Alternating Sign Matrices} 
In this section we prove Theorem \ref{3}. This essentially allows us to construct a novel combinatorial model of the ASM. To be consistent with Bar-Natan's bi-colored theory, we use the color green to the edge labeling $y$ and the color red to the edge labeling $n$ in the definition of $Lucas-Coloring$. 

\subsection{The grid graph $G_{n}$} 
Suppose $\{H_{i} \}_{i=1}^{n}$ and $\{V_{i} \}_{i=1}^{n}$ denote a collection of $n$ distinct horizontal and vertical line segments such that each $H_{i}$ and $V_{j}$ intersect transversally at a unique point. The graph $G_{n}$ is defined to be the graph whose vertex set is the collection of $n^{2}$ intersection points between $H_{i}$ and $V_{j}$ along with the $4n$ boundary points of $H_i$ and $V_{j}$ and the edge set is the set of all line segments between these vertices. In particular the lines $H_{i}$ or $V_{j}$ embed as a line graph $L_{n+1}$ as  in \ref{Linegraph} inside $G_{n}$. We label the boundary vertices by the symbols 
$\{1, \cdots, n\} \cup \{ 1', \cdots, n' \} \cup \{ \underline{1}, \cdots, \underline{n}  \} \cup \{ 1'', \cdots, n'' \}$ as described in the diagram \ref{gridgraph}. 

\begin{figure}[!htb]
\begin{center}
    \scalebox{1}{\hspace*{-1cm}\includegraphics{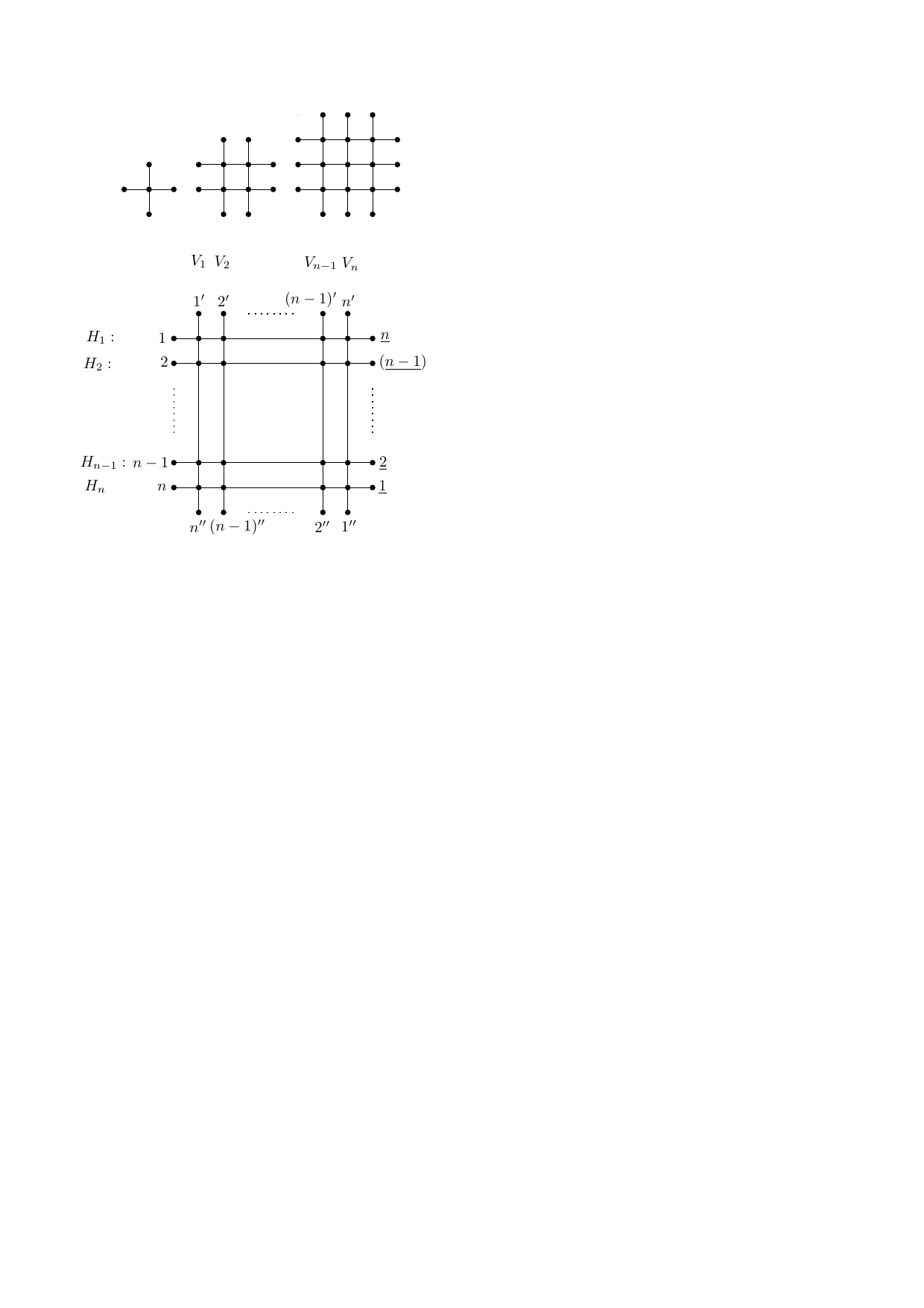}}
\caption{ The grid graphs $G_1, G_2, G_{3}$ and the boundary labeling of $G_{n}$.}
\label{gridgraph}
\end{center}
\end{figure}

We note that the degree of each vertex $v \in G_{n}$ is either one or four. We call a vertex $v \in G_{n}$ internal, if its degree $deg(v)$ is four.  According to the definition of $Lucas-Coloring$, there are exactly six valid edge colorings possible for the incident edges of an internal vertex $v$ which are described in \ref{lucascoloring}.  
\begin{figure}[!htb]
\begin{center}
    \scalebox{1}{\hspace*{-1cm}\includegraphics{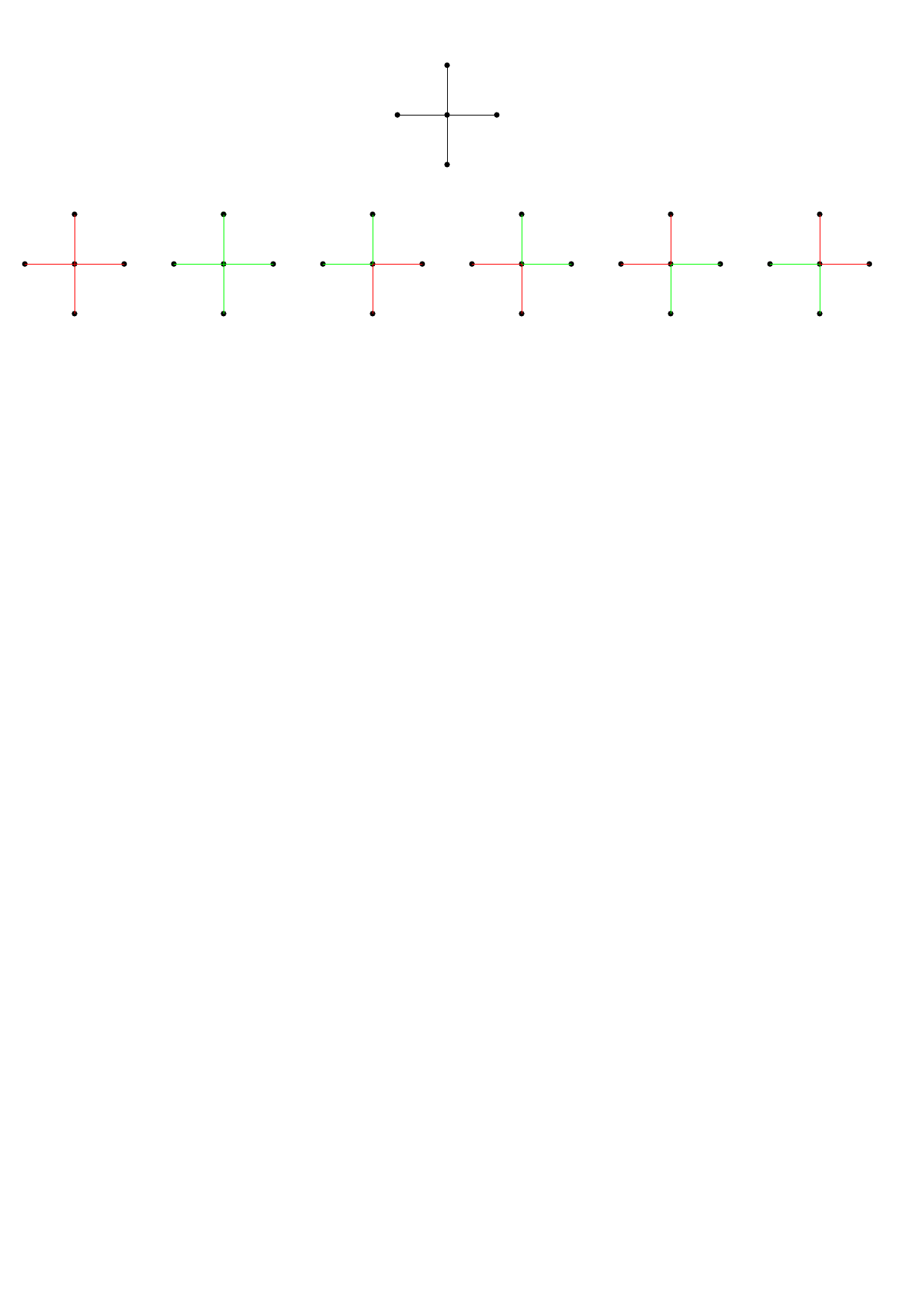}}
\caption{ The six possible $Lucas-Coloring$ for the degree four vertex.}
\label{lucascoloring}
\end{center}
\end{figure}

\subsection{Restricted $Lucas-Coloring$ of $G_{n}$}\label{Restricted} For the grid graph $G_{n}$ an edge coloring by the colors red and green is called a Restricted $Lucas-Coloring$ if it satisfies the following conditions: 
\begin{itemize}
    \item For an internal vertex $v \in G_{n}$, the incident edges are colored as in the definition of $Lucas-Coloring$. In particular the edges are colored by one of the six possible color schemes in \ref{lucascoloring}. 
    \item The boundary edges on a particular side are colored alternatively. For example the edges adjacent to the boundary vertices $\{1, 2, \cdots, n \}$ are colored alternatively that is either by red, green, red, $\cdots$ or by green, red, green, $\cdots$.  Similarly for the edges adjacent to $\{1', 2', \cdots, n'\}$, $\{ \underline{1}, \underline{2}, \cdots, \underline{n} \}$ and $\{1'', 2'', \cdots, n'' \}$. 
    \item The pair of edges adjacent to $(1,1')$, $(n', \underline{n})$, $(\underline{1}, 1'')$ and $(n'', n)$ are colored by the same color.  
    \item (Normalization) The edge adjacent to $1$ is colored by green. 
    
\end{itemize}

Note that the collection of restricted $Lucas-Coloring$ of $G_{n}$ is different from the genuine $Lucas-Coloring$ of $G_{n}$. This is because, the pendent edges in $G_{n}$ must be colored by green by the definition of $Lucas-Coloring$. However, if we connect the pair of vertices $(i, i')$  and $(\underline{i}, i'')$ by  external edges $e_{i}$ and $f_{i}$ respectively for all $1 \leq i \leq n$, then the collection of restricted $Lucas-Coloring$ of $G_{n}$  corresponds to a sub-collection of genuine $Lucas-Coloring$ of the augmentation $\tilde{G}_{n} := G_{n} \cup \{ e_i,f_{i}: 1 \leq i \leq n \} $. 

Now we turn to the proof of Theorem $2$. 
\begin{proof}[Proof of Theorem \ref{3}]
We define maps $\varphi: \mathcal{A}_{n} \to ResLuc(G_{n})$ and $\psi: ResLuc(G_{n}) \to \mathcal{A}_{n}$ where $\mathcal{A}_{n}$ and $Res(G_{n})$ denote the collection of $(n \times n)$ ASM's and the collection of restricted $Lucas-Colorings$ of $G_{n}$. 
  \begin{enumerate}[label=(\alph*)]
\item (ASM to the restricted $Lucas-Coloring$) Let $A \in \mathcal{A}_{n}$ be an alternating sign matrix. Let $A_2$ denotes the $\mathbb{Z}/2$ reduction of $A$. The rows and columns of  $A_{2}$ are denoted by $\{R_{i} \}_{i=1}^{n}$ and $\{C_{i}\}_{i=1}^{n}$ respectively. For the $i-th$ row $R_{i}$ of $A_{2}$ we shall define a coloring in the line graph corresponding to the $i-th$ horizontal line $H_{i} \subset G_{n}$. Similarly, for the $j-th$ column $C_{j}$, we shall define a coloring in the line graph corresponding to the $j-th$ vertical line $V_{j}$. These separate colorings of $H_{i}$ and $V_{j}$ together shall define a restricted $Lucas-Coloring$ of $G_{n}$. 

Let's enumerate the internal vertices of $H_{i}$ as $1, 2, \cdots, n$ from left to right. If the $k-$ th entry of $R_{i}$, $(A_{2})_{ik}=1$ , then the adjacent edges of the vertex $k$ in $H_{i}$ are colored by the same color. If the $k-$ th entry $(A_{2})_{ik}=0$ then the adjacent edges of the vertex $k$ are of different colors. If we fix the edge coloring of one of the boundary edge, then this coloring scheme extends to a well defined coloring on the other edges of $H_{i}$. All we need to verify that the coloring is compatible with the restricted $Lucas-Coloring $ of $G_{n}$. 

We only argue when $n$ is odd. We need to first show that when $n$ is odd, then the boundary edges of $H_{i}$ get the same color. Now, there are exactly odd number of $1$ in the row of $R_{i}$. Since $n$ is odd, the number of $0$ must be even. Thus, there are precisely even number of coloring changes along the edges of $H_{i}$. It forces the boundary edges of $H_{i}$ to be colored by the same color. Thus, the coloring scheme is compatible with the restricted $Lucas-Coloring$ of $G_{n}$. The proof is similar when $n$ is even. 

A similar construction on $C_{j}$ defines an edge coloring on the $j-$ th vertical line $V_{j} \in G_{n}$. Finally we show that the edge colorings on $\{H_{i}\}_{i=1}^{n}$ and $\{V_{j}\}_{j=1}^{n}$ together define a  restricted $Lucas-Coloring$ of $G_{n}$. If $(A_{2})_{ij}=0 $ then the row coloring on $H_{i}$ and the  column coloring on $V_{j}$ combines into one of the last four coloring in \ref{lucascoloring}. Thus, we can assume $(A_{2})_{ij}=1$. 

We look at the collection of entries $A(ij)$ in $A_{2}$ that correspond to the points on the portion of $H_{i}$ and $V_{j}$ as indicated by the solid lines in  $G_{n}$ as described in the diagram \ref{solid}. More precisely, the collection $A(ij)$ on the matrix $A_{2}$ corresponds to \[ A(ij):=\{(A_{2})_{ik}: 1 \leq k \leq j  \} \cup \{ (A_2)_{ki}: 1 \leq k \leq i \}.\]
As $(A_{2})_{ij}=1$, then the number of non-zero entries in the collection $A(ij)$ must be odd. There are exactly, $(i+j-1)$ many points on the solid lines of $G_{n}$. We can treat this collection as a line graph $L_{i+j+1}$ and use the same coloring scheme as described in the previous paragraph. Any one of the boundary edge coloring extends to a unique edge coloring of the portion $A(ij)$. This must coincide with the existing colors of $H_i$ and $V_{j}$. Thus, it implies that the incident edges  in $v= H_i \cap V_{j}$ are of same color. Thus, it defines a  restricted $Lucas-Coloring$ of $G_{n}$.  

This defines the map $\varphi: \mathcal{A}_{n} \to ResLuc(G_{n})$. 

\begin{figure}[!htb]
\begin{center}
\scalebox{1}{\hspace*{0cm}\includegraphics{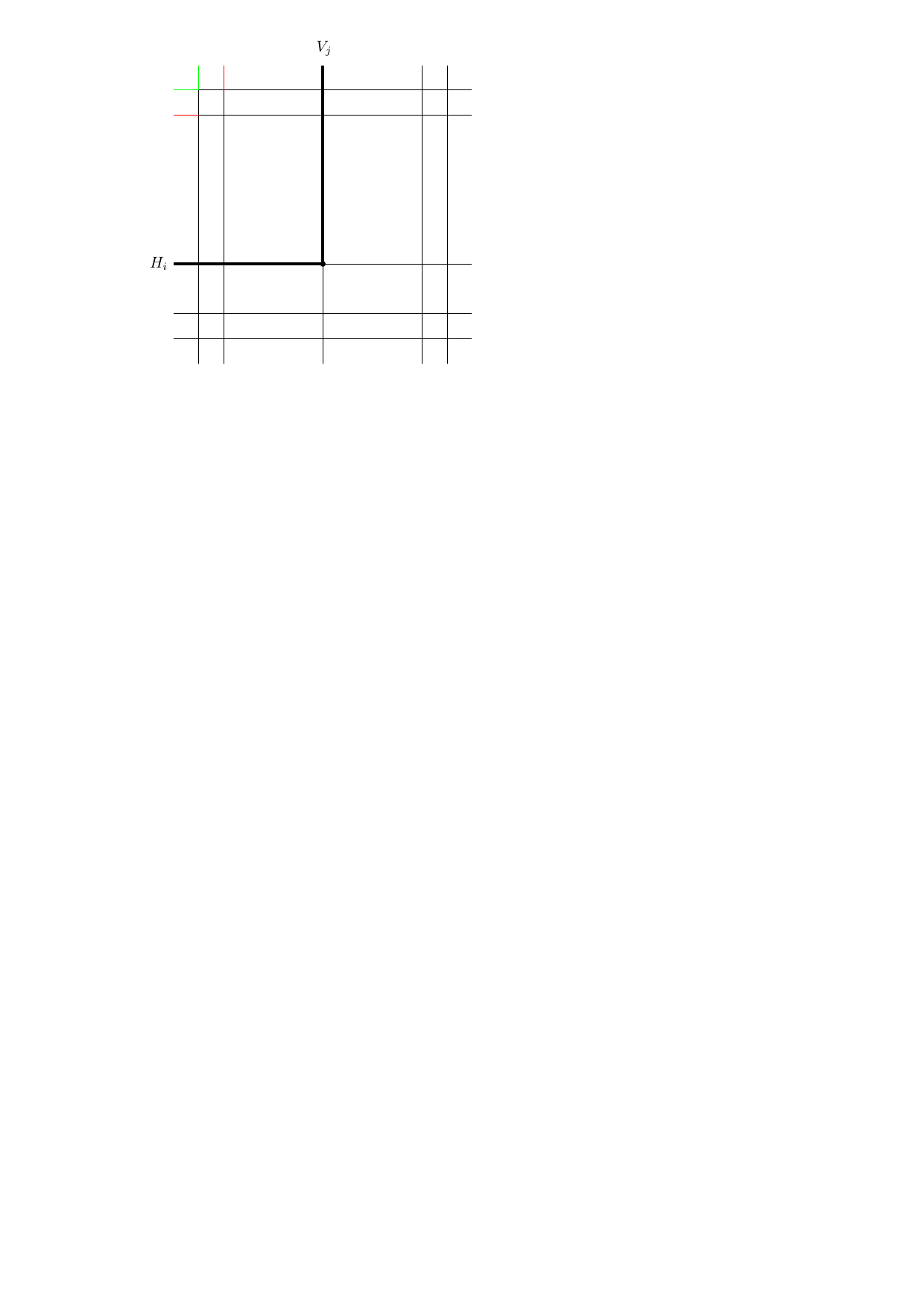}}
\vspace{0cm}\caption{The portion of $H_{i}$ and $V_{j}$.} 
\label{solid}
\end{center}
\end{figure}

\item  (Restricted $Lucas-Coloring$ to the ASM)
After defining the map $\varphi$, the construction of the map $\psi$ in the opposite direction is quite intuitive. Suppose $\alpha \in ResLuc(G_{n})$ be a fixed $Lucas-Coloring$ of $G_{n}$. We define the matrix $A:= \psi(\alpha) \in \mathcal{A}_{n}$ as follows. 

If the vertex, $v=H_{i}\cap V_{j}$ has one of the last four colorings in \ref{lucascoloring} then we assign the $(i,j)-$ th entry of $A$, $A_{ij}$ to be $0$.  Having assigned all the zeros of $A$, we look at the $i-$ th row. Let $(i,k_1), (i,k_2), \cdots, (i,k_{r})$ be the empty positions in the $i-$th row of $A$. Then we define: 
 \[ A_{ik_{l}}:= (-1)^{l+1}\] 
 In particular we set $A_{ik_1}=1, A_{ik_2}=-1, A_{ik_3}=1$ et cetera. We claim that the matrix $A$, defined above is indeed an alternating sign matrix. We shall assume that $n$ is odd. The proof for $n$ being even is analogous. 

 Since, $n$ is assumed to be odd the boundary edges of the horizontal line $H_{i}$ are  of same colors in the restricted $Lucas-Coloring$ $\alpha$. There are an even number of intermediate edges in $H_{i}$. The $Lucas-Coloring$ restricted to $H_{i}$, $\alpha|_{H_{i}}$ defines a sequence in $ (\alpha_1, \alpha_{2}, \cdots , \alpha_{n+1}) \in  \{ red, green \}^{n+1}$  such that $\alpha_1= \alpha_{n+1}$. Thus, there must be an even number of change of colors along the edges of $H_{i}$. Thus, there are even number of zeros on the $i-$th row of $A$ proving the row-wise condition of ASM. 

 A similar construction on the vertical lines $V_{j}$ defines a matrix $A'$ satisfying the column-wise condition of ASM. By the construction of $A$ and $A'$ we have the following: 
 \[A_{ij}=0 \iff A'_{ij}=0 \] 
 We shall show that $A= A^{'}$. We may assume $A_{ij} \neq 0$ and $A'_{ij} \neq 0$.  There are two cases to consider:

\textbf{Case 1}  $i \equiv j (mod\, \, 2)$ and $v= H_{i} \cap V_{j}$. Also observe that the all four incident edges of  $v$ are either colored by red or by green as in the first two cases of \ref{lucascoloring}.   Thus, we have exactly one of the situation as described in \ref{possible}.

 \begin{figure}[!htb]
\begin{center}
\scalebox{0.75}{\hspace*{0cm}\includegraphics{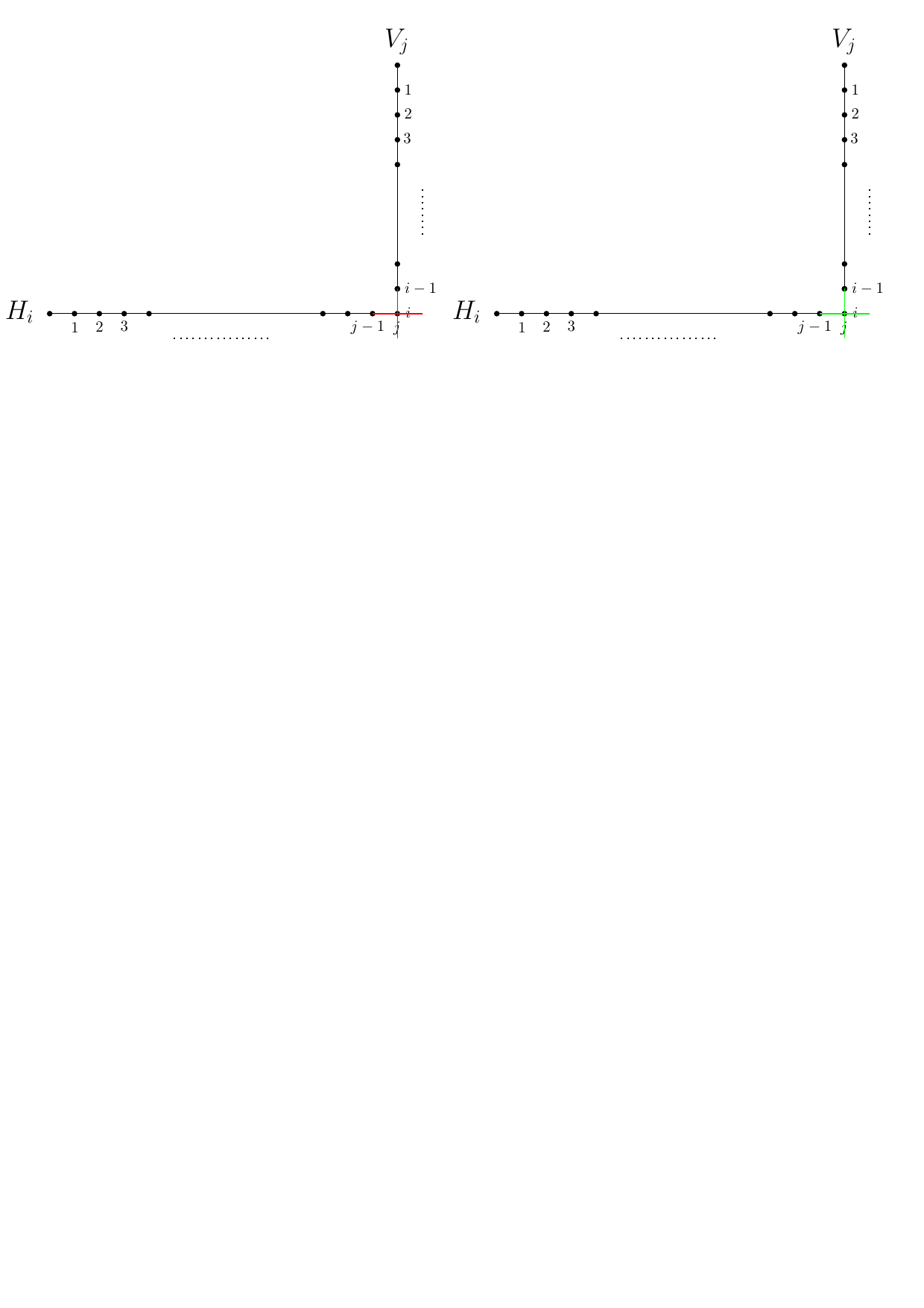}}
\vspace{0cm}\caption{The two possible cases.} 
\label{possible}
\end{center}
\end{figure} 
Suppose $1 \leq j_1 < j_2 < \cdots< j_{p} \leq j-1$ and $1 \leq i_1 < i_2 < \cdots< i_{q} \leq i-1$ be the labeling of the vertices on $H_i$ and $V_{j}$ where the edge coloring changes in Figure \ref{possible}.  Since, $i \equiv j$ we have $p \equiv q$.  Thus, by the definition of $A$ and $A'$, $A_{ij}= (-1)^{j-p+1}$ and $A'_{ij}= (-1)^{i-q+1}$ concluding that $A_{ij}=A'_{ij}$. 

The proof is similar in \textbf{case 2} where $i \equiv j+1 (mod \, \, 2)$. It finishes the construction of the map $\psi$ in the opposite direction. Clearly, the maps $\varphi$ and $\psi$ are the inverse of each other concluding the proof of Theorem $2$.

\end{enumerate}  
\end{proof}

 \begin{figure}[!htb]
\begin{center}
\scalebox{1}{\hspace*{-2cm}\includegraphics{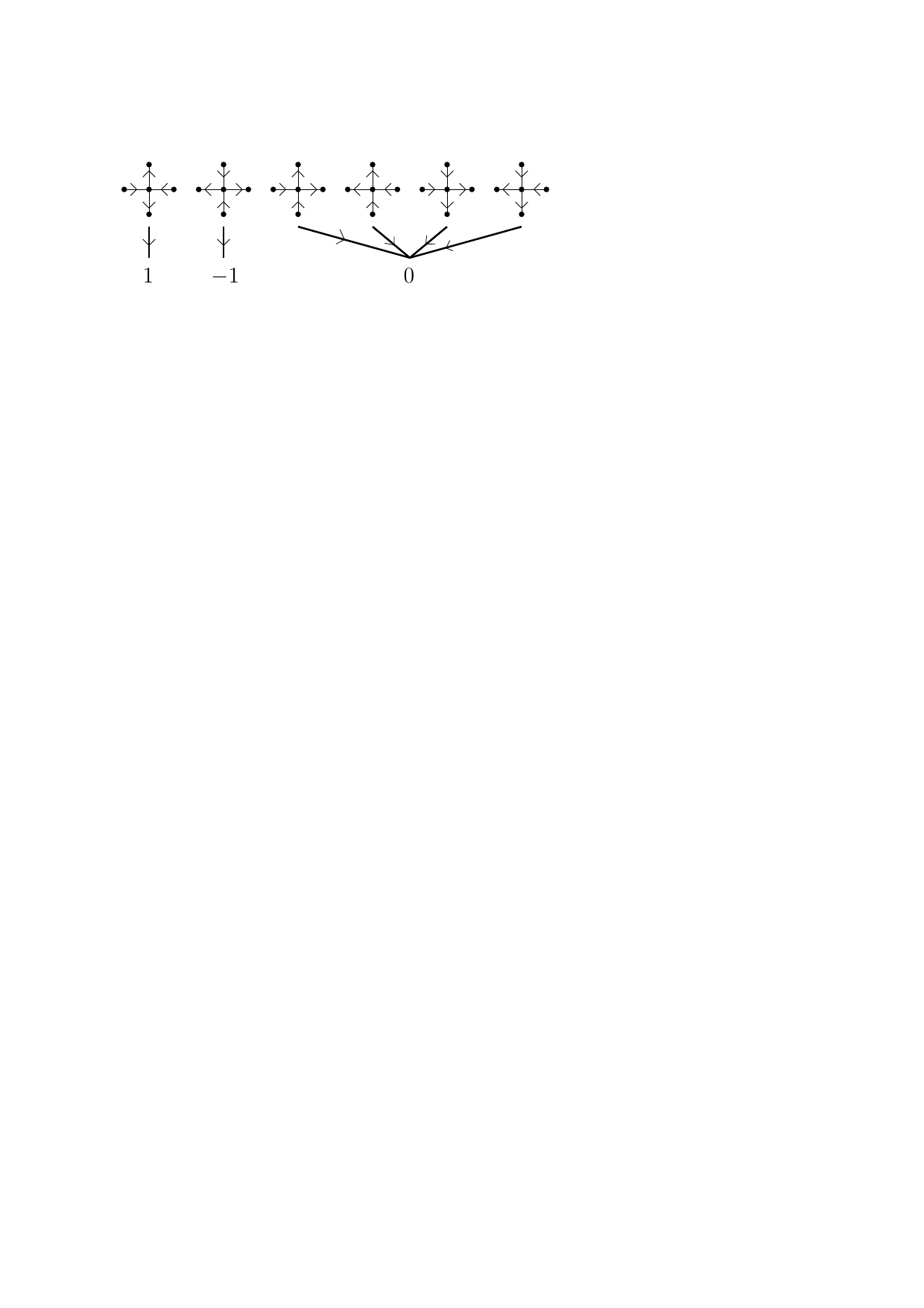}}
\vspace{-2cm}\caption{The conversion scheme of square ice to ASM.} 
\label{conversion}
\end{center}
\end{figure} 

 \subsection{Comment on the relationship between the $6$ vertex model and $Lucus-Colorings$}
It is well known in the literature \cite{MR865837}, \cite{MR1226347} that there is a bijective correspondence between the collection of $2-$ in and $2-$ out orientations of the grid graph (also known as square ice) with the boundary horizontal edges are pointed inwards and the boundary vertical edges are pointed outside as in \ref{squareice}  and the collection of $ n \times n$ ASM's.  The conversion of the square ice to the $n \times n$ ASM is given by the diagram \ref{conversion}. Note that it is quite easy to read off the elements of the alternative sign matrix from the six vertex model of square ice by virtue of the explicit association in \ref{conversion}. However in the $Lucas-Coloring$ model of ASM it is comparatively difficult to read off the non-zero elements of the alternative sign matrix just by looking at the corresponding $Lucas-Coloring$. $+1$ and $-1$ can correspond to either of the first two colorings in \ref{lucascoloring}. 

The situation is quite similar to the story described in the introduction. The Khovanov-Lee complex can be realized in the category $Cob^{3}_{/l}$. However, passing to the Karubi envelope, $Kar(Cob^{3}_{/l})$ retrieves the total homology of the perturbed complex $H^{*}(CKh(D), d_{Kh}+ \Phi)$ but in this process it looses many of the finer structures of $Cob^{3}_{/l}$. For example, it is not clear how to understand the homotopy type of the filtered chain complex from the Karubi envelope picture.  Recall that the differentials are either $0$ or isomorphisms in $Kom(Kar(Cob^{3}_{/l}))$. 

Thus, the $Lucas-Coloring$ construction can be thought of as the $Karoubification$ of the $6-$ vertex model.

 \begin{figure}[!htb]
\begin{center}
\scalebox{1}{\hspace*{-2cm}\includegraphics{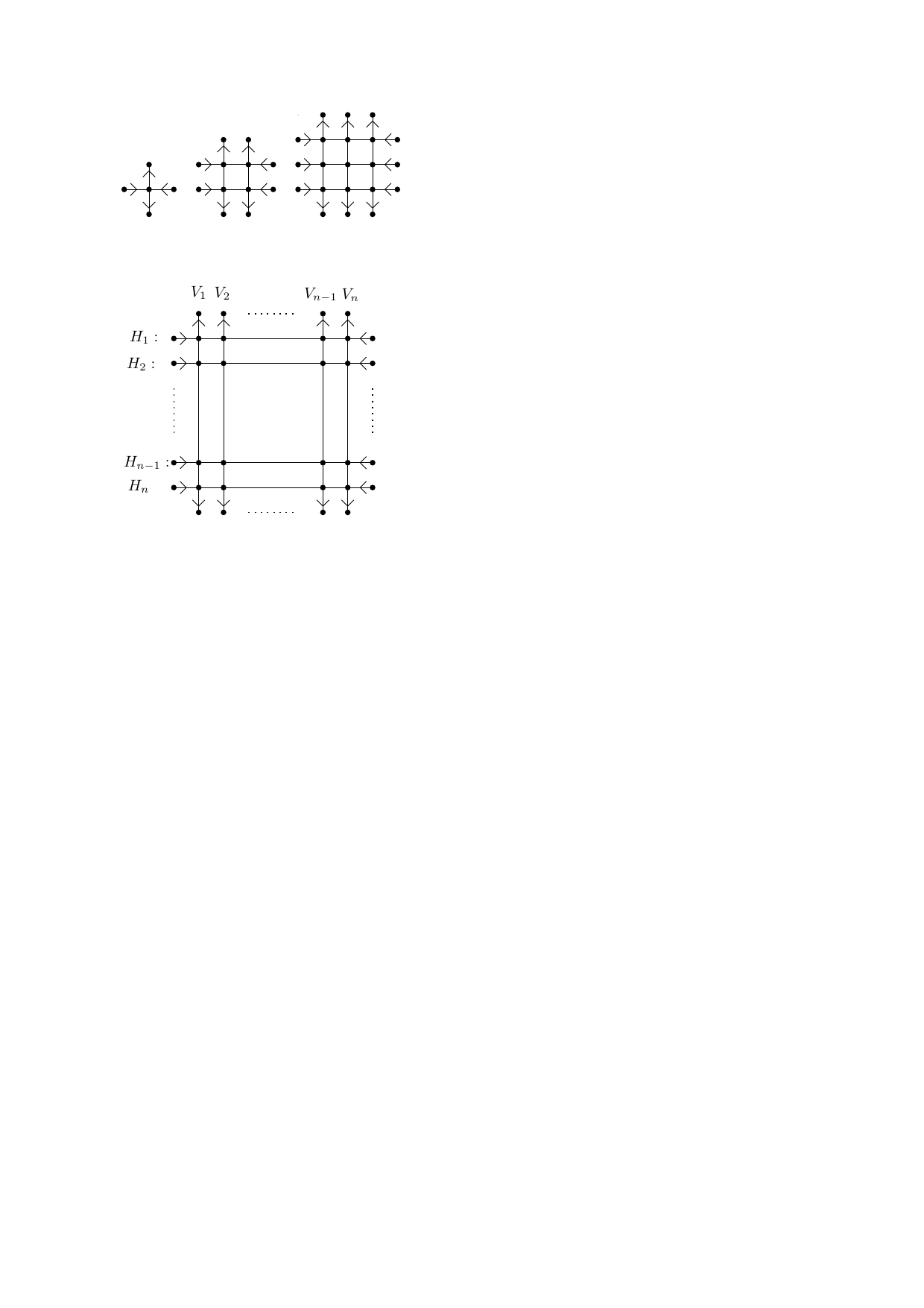}}
\vspace{0cm}\caption{$n \times n$ square ice.} 
\label{squareice}
\end{center}
\end{figure}

\section{The problem of  Ciucu and Krattenthaler}\label{sec:two}

\begin{figure}[!htb]
\begin{center}
\hspace{5cm} \scalebox{0.7}{\includegraphics{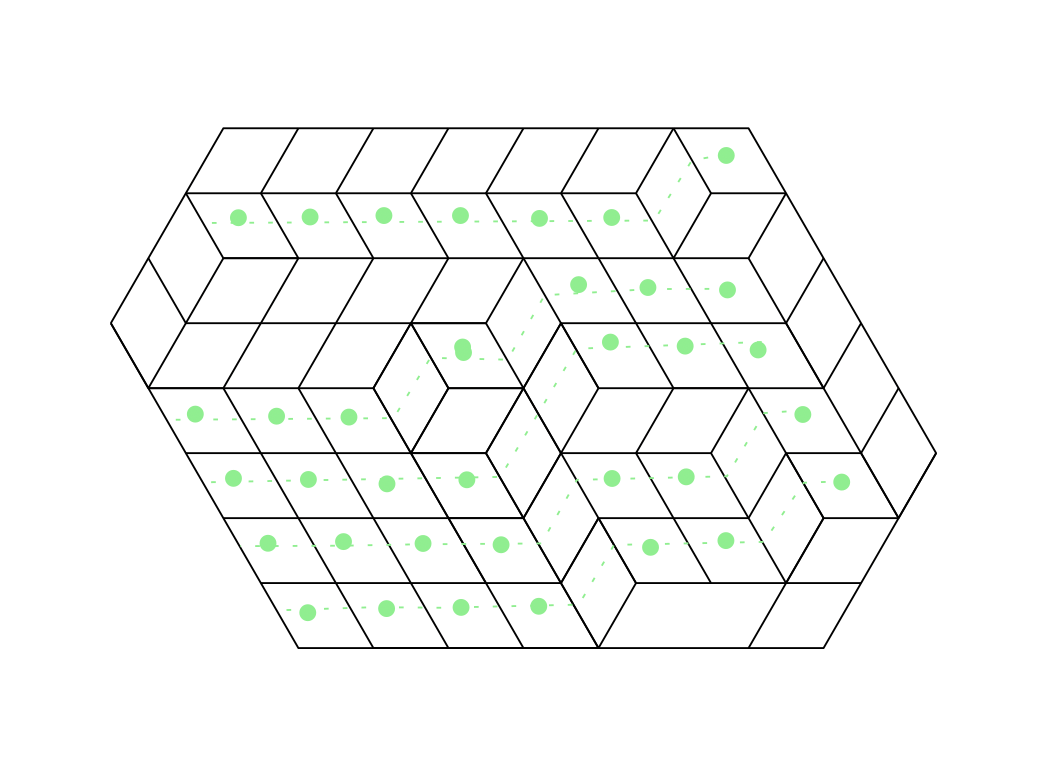}}
\vspace{-1.5cm}\caption{An example of Lozenge tiling for $H(5,7,3)$.}
\label{plane.partition}
\end{center}
\end{figure}

A plane partition is a rectangular array of non-negative integers with the property that all rows and columns are weakly decreasing. A plane partition contained in an $\left(a \times b \right)$ rectangle and with entries at most $c$ can be identified with its three-dimensional diagram ---a stack of unit cubes contained in an $\left(a \times b \times  c \right)$ box. This three dimensional diagram in turn can be regarded as a lozenge tiling of a hexagon $H(a, b, c)$ with side lengths $a, b, c, a, b, c$ (in cyclic order) and angles of $120^{\circ}$. A lozenge tiling of a region on the triangular lattice is a tiling by unit rhombi with angles of $60^{\circ}$ and $120^{\circ}$ referred to as lozenges. This simple bijection is the crucial link between the theory of lozenge tilings and that of plane partitions. For example Figure \ref{plane.partition} corresponds to the plane partition 
\[ \begin{bmatrix}
    3 & 2 & 2 & 2 & 2 & 2 & 2 \\
    2 & 2 & 2 & 1 & 0 & 0 & 0 \\
    2 & 2 & 2 & 0 & 0 & 0 & 0 \\
    2 & 1 & 1 & 0 & 0 & 0 & 0 \\
    2 & 1 & 1 & 0 & 0 & 0 & 0 \\
\end{bmatrix},\]

\noindent for $a=5, b=7$ and $c=3$. A celebrated result of MacMohan \cite{MR2417935} says that the number of lozenge tiling of $H(a,b,c)$ is given by the following beautiful formula 
\[ \prod_{i=1}^{a} \prod_{j=1}^{b} \prod_{k=1}^{c} \frac{i+j+k-1}{i+j+k-2}\]

\begin{figure}[!htb]
\begin{center}
   \hspace{5cm} \scalebox{0.7}{\includegraphics{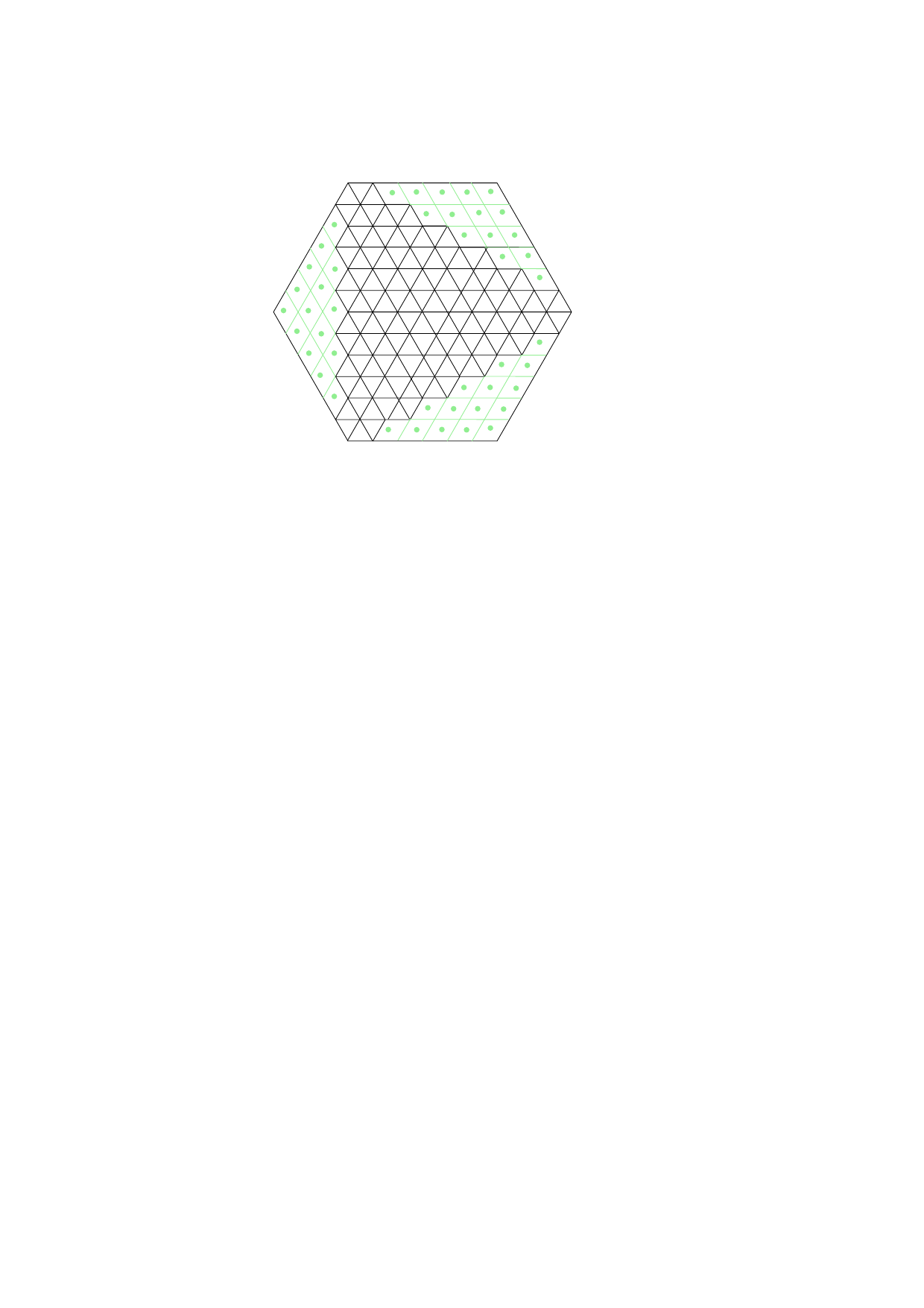}}
\vspace{-14cm}\caption{The maximal non-intersecting staircases are colored green.}
\label{Definition}
\end{center}
\end{figure}

Motivated by a work of Proctor \cite{MR936084}, Ciucu and Krattenthaler proposed the following problem (\cite{MR1940333} Problem $1.5$, P. $207$): 
Find a formula for the lozenge tilings of $T_{a}$ where $T_a$ is given by the Figure $\ref{difficult diagram}$. More precisely $T_{a}$ is defined to be the region constructed from the regular hexagon $H(a,a,a)$ by removing the maximal non-intersecting staircases from its alternating corners (see Figure \ref{Definition} for the case $a=6$). Here staircases are represented by Ferrers diagram.  Taking the weak dual of the configuration $T_{a}$, finding a formula for the lozenge tilings for $T_{a}$ is equivalent to finding a formula for the enumeration of the perfect matching for its weak dual in Figure \ref{dual diagram}. By abuse of notation we also use $T_{a}$ to indicate the weak dual configurations.  
\begin{figure}[!htb]
\begin{center}
    \scalebox{0.5}{\includegraphics{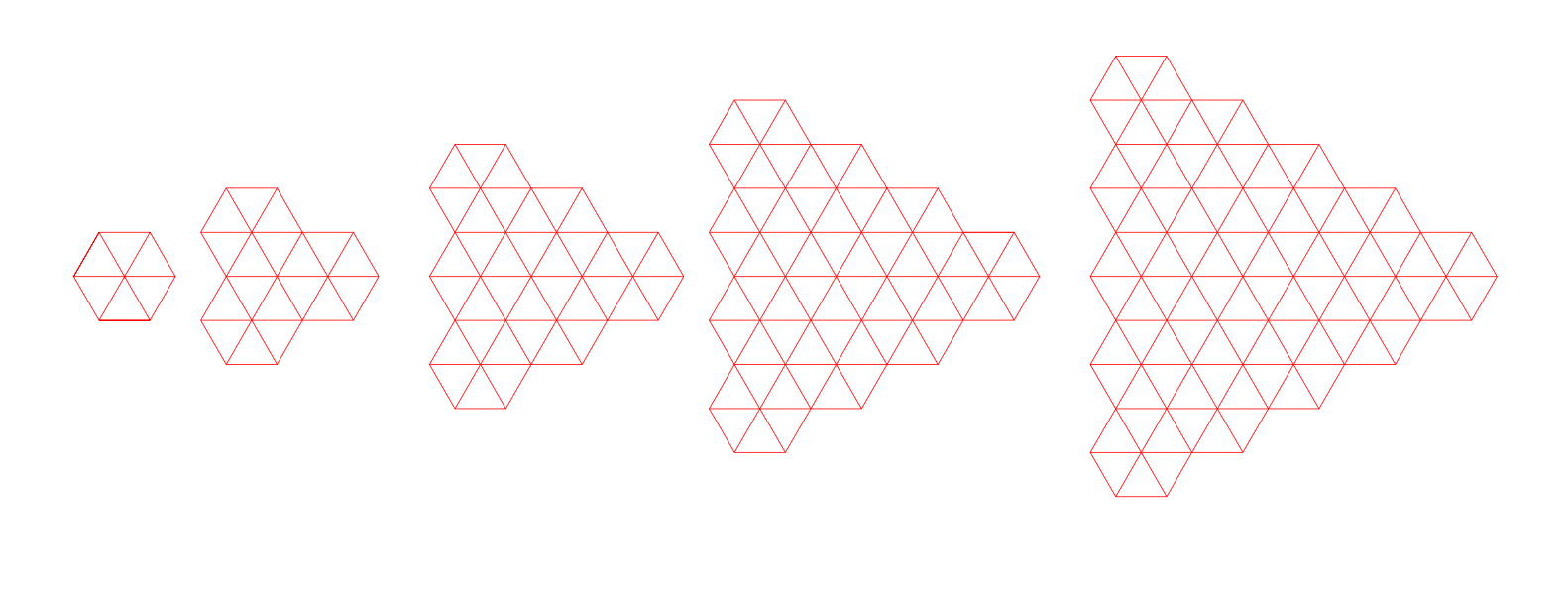}}
\caption{The configuration $T_{a}$ for $a=1,2,3,4$ and $5$.}
\label{difficult diagram}
\end{center}
\end{figure}

\begin{figure}[!htb]
\begin{center}
    \scalebox{0.5}{\hspace*{-4cm}\includegraphics{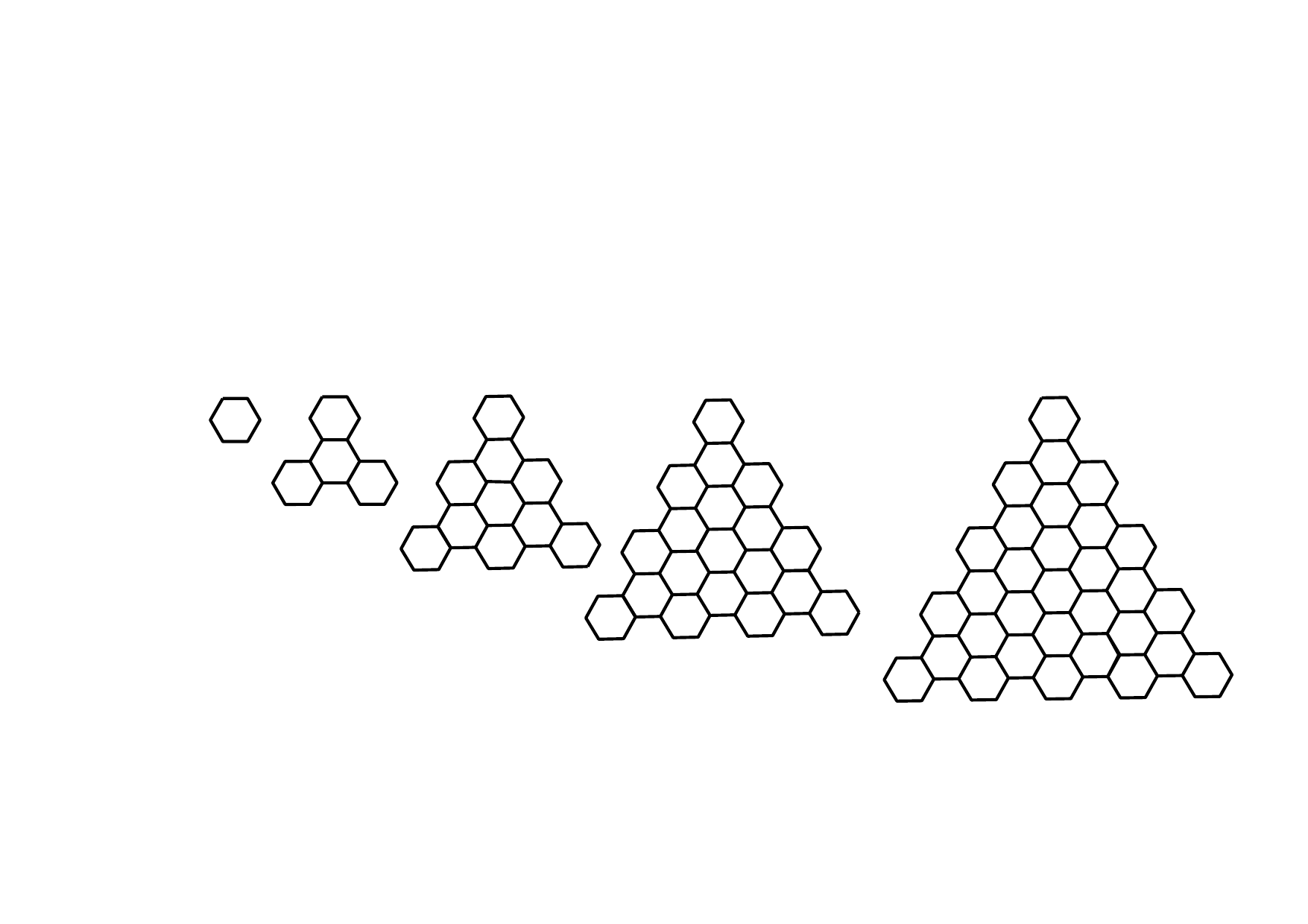}}
\vspace{-2cm}\caption{The dual configuration $T_{a}$ for $a=1,2,3,4$ and $5$.}
\label{dual diagram}
\end{center}
\end{figure}

Ciucu and Krattenthaler also provided the following table in \cite{MR1940333}. : 
\begin{center}
\begin{tabular}{ |c|c| } 
\hline
$a$ & $M(T_{a})$  \\
\hline
 $1$ & $2$   \\ 
$2$ & $3^{2}$  \\ 
$3$ & $2^{3} \cdot 13$  \\ 
$4$ & $2^{2} \cdot 5^{2} \cdot 31$ \\
$5$ & $2 \cdot 3^{2} \cdot 19^{2} \cdot 37 $ \\
$6$ & $2 \cdot 7^{3} \cdot 13 \cdot 43 \cdot 127$ \\
$7$ & $2^{7} \cdot 3^{5} \cdot 5^{3} \cdot 7 \cdot 13 \cdot 73$ \\
\hline
\end{tabular}
\end{center}
The amount of primes appearing in the factorization for $M(T_a)$ are confounding. They further asked if the formula could offer an insight why such a phenomenon occurs. 
\begin{remark}
    Although we could provide a formula for $M(T_{a})$ in terms of $Lucas-Coloring$, we could not explain the second part of the question. 
\end{remark}


\section{Application 2: Relationship between $Lucas-Colorings$ and perfect matching } 
In this section we justify the importance of the statistic $m(\cdot)$ as defined in Subsection $2.4$. We will demonstrate that the statistic $m(\cdot)$ can be interpreted as the enumeration of perfect matching for a canonically defined graph. \\ \\ 
Let $g$ be a loop less connected planar graph with number of vertices equal to $k \geq 2$. We also choose a smooth embedding \[i : g \to i(g) \subset \mathbb{R}^{2} \] inside the Euclidean plane.  We construct a graph $G$ as follows: 
for each vertex $v_{i}$ let \[ n_{i} := deg(v_i). \]

Now for each vertex $v_{i}$ we choose a ball $B_{\delta_{i}}(v_{i})$ of radius $\delta_{i}$ small enough such that the boundary of the ball intersect the graph at exactly $n_{i}$ points. The points lie on the distinct edges that are incident to the vertex ${v_{i}}$. Next we replace the interior of the $\delta_{i}$ ball by the Polygon graph $P_{n_{i}}$ (see \ref{Polygongraph}) having $n_{i}$ points on the boundary of the disk as its vertices. If $v_{i}$ is a pendent vertex that is, if $n_{i}=1$, then we leave the vertex as it is.  Let $G$ denote the new graph. Please see Figure $\ref{construction.1}$  and Figure $\ref{construction.2}$ for examples of such construction. In the figures the vertices of $g$ are colored red.  

\begin{figure}[!htb]
\begin{center}
\scalebox{0.35}{\hspace*{4cm}\includegraphics{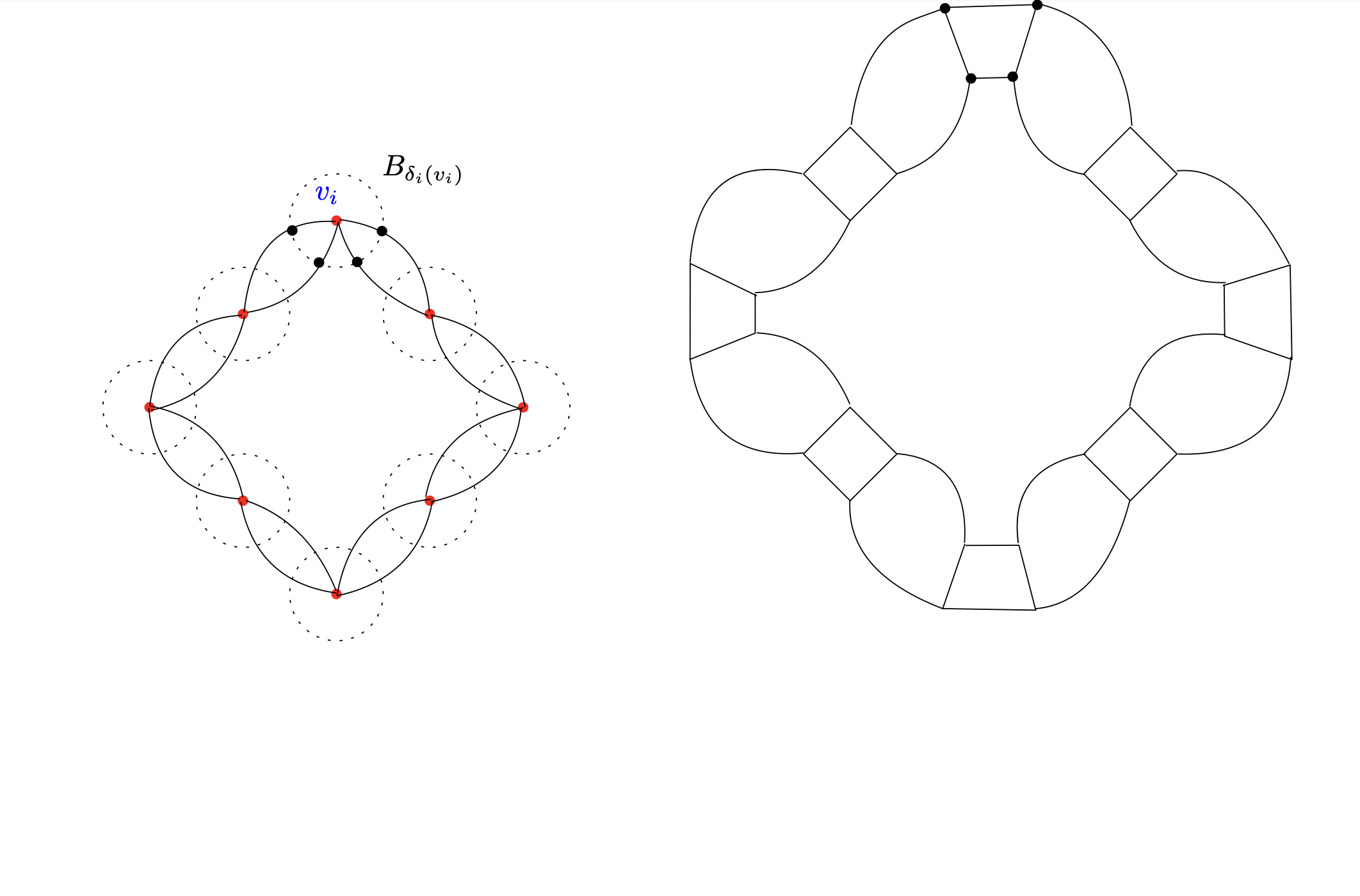}}
\vspace{-2cm}\caption{The graph $g$ and $G$.} 
\label{construction.1}
\end{center}
\end{figure}

\begin{figure}[!htb]
\begin{center}
\scalebox{0.35}{\hspace*{4cm}\includegraphics{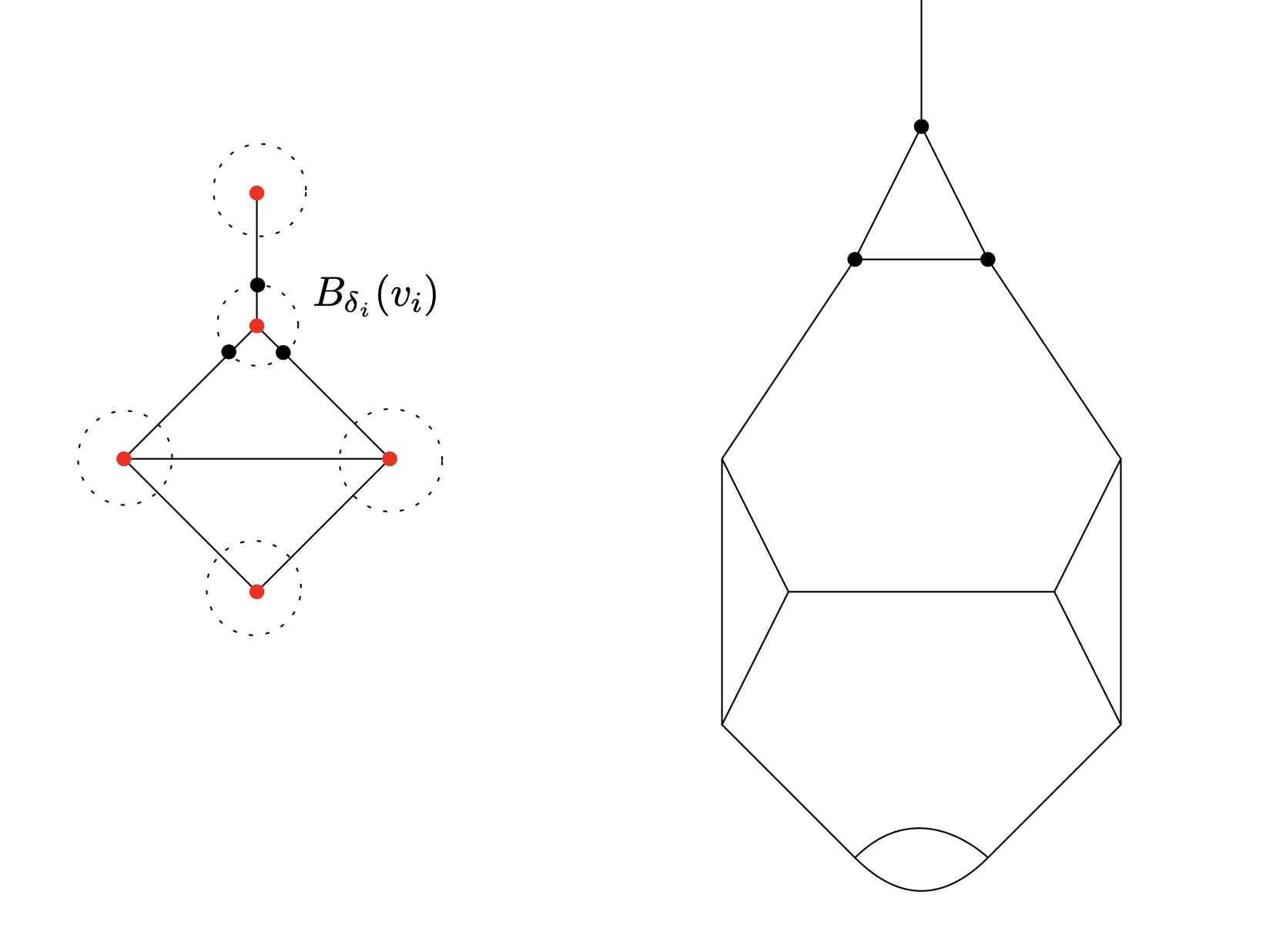}}
\caption{The graph $g$ and $G$.} 
\label{construction.2}
\end{center}
\end{figure}


 \begin{proof}[Proof of theorem \ref{5}]
Given a $Lucas-Coloring$ $A$ of $g$, if the edge $e \in g$ is colored by $y$, then we match the two vertices by the edge $e^{'} \subset e $ in the graph $G$. On the other hand if two consecutive edges incident to the vertex $v \in g$ are colored by $n$, then we match them by the external edge in $G$. 

Conversely, given any perfect matching $M$ of $G$ and a vertex $v \in P_{n} \subset G$, M can either match $v$ with a vertex $w \in  P_{n}$ or with a vertex $w \notin P_{n}$. In the first case, $w \in P_{n}$, we color the corresponding consecutive edges in $g$ by $n$. In the second case, both $v$ and $w$ must lie on a edge $e \in g$ by the construction of $G$. In this case, we color it by $y$. Clearly, this defines a $Lucas-Coloring$ of $g$ in the opposite direction. 

Finally note that for any even Polygon graph $P_{2n}$, there are precisely two perfect matchings. Thus, in the case $M$ matches the vertices of $P_{2n}$ within itself there is an another perfect matching $M^{'}$ which is identical on the outside of $P_{2n} \subset G$ as $M$ but different in $P_{2n}$. Both $M$ and $M'$ defines the same $Lucas-Coloring$ A of $g$.  Hence, we get the equality
\end{proof}

\begin{remark}
    A result of similar kind has been established by Mihai Ciucu in \cite{MR1382040} for cellular graphs. 
\end{remark}

\section{An Alternative Proof of Theorem \ref{4} and \ref{5}} 
In this section we provide an algebraic proof of Theorem \ref{4} and Theorem \ref{5} using the state sum decomposition method. The reader can refer to the appendix for the definition and related lemmas.   \\
\begin{figure}[!htb]
\begin{center}
\scalebox{1}{\hspace*{7cm}\includegraphics{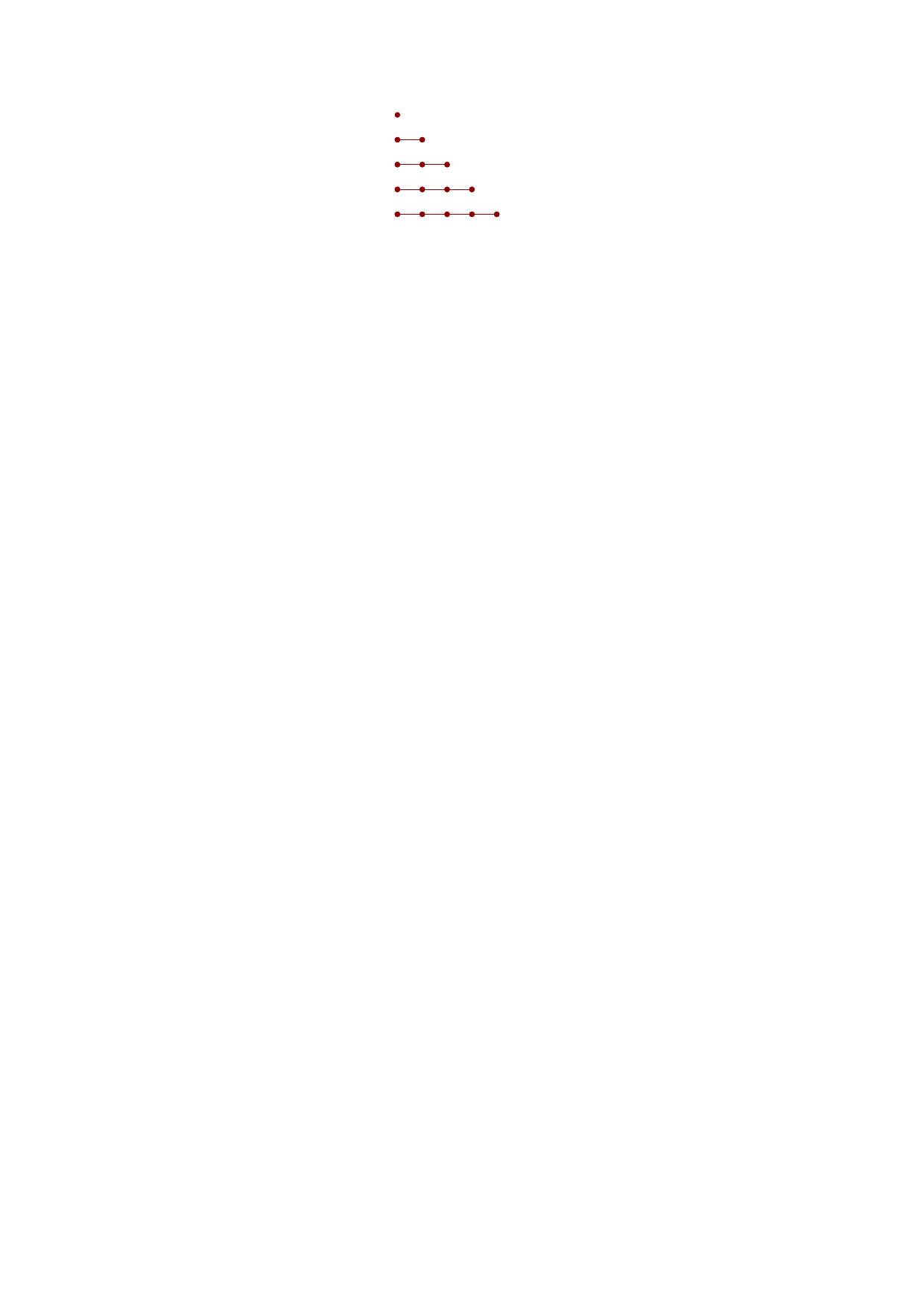}}
\vspace{-2cm}\caption{ Line graph $L_{n}$ for $n=1,2,3,4$ and $5$.}
\label{Linegraph}
\end{center}
\end{figure}
Let $L_{n}$ denotes the line graph with $n$ distinguished vertices as shown in Figure \ref{Linegraph}. We are interested in calculating the state sum decomposition of $L_{n}$. A simple tensor in $\mathcal{M}^{\otimes n}$ is called a Fibonacci tensor if it satisfies the condition of Fibonacci sequence. More precisely, the occurrence  $n \in \mathcal{M}$ in the simple tensor appear in consecutive pair and the $n$'s can be partitioned into block of $n \otimes n$.  Let $\mathscr{F}_{n} \in \mathcal{M}^{\otimes n}$ denotes the sum of all Fibonacci tensors. With these notations we prove the following lemma: 
\begin{lemma}
    The state sum decomposition of $L_{n}$ is given by $\mathscr{F}_{n}$ that is
    \[ v_{L_{n}}= \mathscr{F}_{n}.\]
\end{lemma}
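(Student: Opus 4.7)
The plan is to proceed by induction on $n$, exploiting the fact that both sides of the claimed equality obey the same Fibonacci-type recursion.

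For the base cases, I would first analyze $L_1$ and $L_2$ directly. The graph $L_1$ consists of a single distinguished vertex with no edges; the only admissible local state in $\mathcal{M}$ is $y$, since the vertex must be matched externally. Hence $v_{L_1} = y = \mathscr{F}_1$. For $L_2$, two distinguished vertices joined by a single edge admit exactly two possibilities in the state sum: either both vertices remain to be externally matched (contributing $y \otimes y$) or the internal edge is used (contributing $n \otimes n$). Thus $v_{L_2} = y \otimes y + n \otimes n = \mathscr{F}_2$.

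For the inductive step, I would split the enumeration for $L_{n+1}$ according to the fate of its rightmost distinguished vertex. If that vertex is matched externally, it contributes the label $y$, and the remaining enumeration is exactly that of $L_n$, yielding the summand $v_{L_n} \otimes y$. If it is matched internally, the only edge available joins it to the $n$-th distinguished vertex, so both vertices must carry the label $n$, and the remaining graph is $L_{n-1}$, yielding the summand $v_{L_{n-1}} \otimes (n \otimes n)$. This produces the recursion
\[
v_{L_{n+1}} = v_{L_n} \otimes y \; + \; v_{L_{n-1}} \otimes (n \otimes n).
\]

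On the combinatorial side, every Fibonacci tensor of length $n+1$ either ends in $y$, in which case its first $n$ entries form a Fibonacci tensor of length $n$, or ends in $nn$, in which case its first $n-1$ entries form a Fibonacci tensor of length $n-1$. Summing over both cases gives
\[
\mathscr{F}_{n+1} = \mathscr{F}_n \otimes y \; + \; \mathscr{F}_{n-1} \otimes (n \otimes n),
\]
which matches the recursion for $v_{L_{n+1}}$. The induction hypothesis then immediately closes the argument.

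The main obstacle I anticipate is not combinatorial but foundational: namely, verifying carefully that the state sum decomposition $v_G$ defined in the appendix genuinely factors as a tensor product under the removal of a rightmost distinguished vertex of $L_{n+1}$, as used above. Once this compatibility between the state sum decomposition and the splitting of $L_{n+1}$ into $L_n$ (respectively $L_{n-1}$) together with one boundary slot is established via the definition of $\mathcal{M}$, the proof reduces entirely to the Fibonacci recursion applied twice.
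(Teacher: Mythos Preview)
Your argument is correct and shares with the paper the same induction scheme, the same base cases, and the same Fibonacci recursion $\mathscr{F}_{n+1}=\mathscr{F}_{n}\otimes y+\mathscr{F}_{n-1}\otimes n\otimes n$. The difference lies in how the recursion for $v_{L_{n+1}}$ is established. The paper invokes the Patching Lemma: it realizes $L_{n+1}$ as $L_{n}$ glued to $L_{2}$ along the last vertex, so that $v_{L_{n+1}}$ is the internal multiplication of $\mathscr{F}_{n}$ with $\mathscr{F}_{2}=y\otimes y+n\otimes n$, and then expands this product algebraically in $\mathcal{M}$ using $y^{2}=y$, $yn=n$, $n^{2}=0$. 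You instead obtain the recursion directly by a combinatorial case split on the rightmost vertex, bypassing the algebra. The ``foundational obstacle'' you flag---that the state sum genuinely factors when you peel off the last one or two vertices---is precisely the content the paper outsources to the Patching Lemma; your hands-on verification of it is a special case of that lemma. Your route is a bit more elementary and self-contained, while the paper's route illustrates the general gluing machinery that is reused throughout Section~7.
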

\begin{proof}
    We prove it by induction on $n$ and the Patching Lemma. For $n=1$, $L_{1}$ consists of single vertex. Thus, \[v_{L_{1}}= y \] For $n=2$, there are two cases to consider. If the distinguished vertices are matched via the edge in $L_{2}$ then it contributes the term $n \otimes n$ in its state sum decomposition. In the other case, both the distinguished vertices are free contributing the term $y \otimes y$. Adding them together we conclude \[v_{L_{2}}= y \otimes y + n \otimes n\]  Next note that the definition of $\mathscr{F}_{n}$ implies the following for all $n\geq 3$\[
 \mathscr{F}_{n}= \mathscr{F}_{n-1} \otimes y + \mathscr{F}_{n-2} \otimes n \otimes n. 
  \]
  Now by the induction hypothesis and the Patching Lemma, the state sum decomposition of the line graph $L_{n+1}$ is given by the internal multiplication of $\mathscr{F}_{n}$ with $\mathscr{F}_{1}$ where we multiply the last coordinate of $\mathscr{F}_{n}$ with the first coordinate of $\mathscr{F}_{1}$ via the matching algebra $\mathcal{M}$. Thus, we have
  \begin{align*}
      \mathscr{F}_{n}\cdot \mathscr{F}_{1}&=  \left( \mathscr{F}_{n-1} \otimes y + \mathscr{F}_{n-2} \otimes n \otimes n \right) \cdot ( y \otimes y + n \otimes n) \\
      &= \mathscr{F}_{n-1} \otimes (y^2) \otimes y + \mathscr{F}_{n-1} \otimes (y \cdot n) \otimes n + \mathscr{F}_{n-2} \otimes n \otimes (n \cdot y) \otimes y + \mathscr{F}_{n-2} \otimes n \otimes (n^2) \otimes n\\
      &= \mathscr{F}_{n-1} \otimes y \otimes y + \mathscr{F}_{n-1} \otimes n \otimes n + \mathscr{F}_{n-2} \otimes n \otimes n \otimes y \\
      &= (\mathscr{F}_{n-1} \otimes y + \mathscr{F}_{n-2} \otimes n \otimes n) \otimes y + \mathscr{F}_{n-1} \otimes n \otimes n \\
      &= \mathscr{F}_{n} \otimes y + \mathscr{F}_{n-1} \otimes n \otimes n \\
      &= \mathscr{F}_{n+1}.
  \end{align*}
  This completes the proof.
\end{proof}
Next we want to compute the state sum decomposition of the Polygon graph $P_{n}$ with $n$ distinguished vertices. Please see Figure $\ref{Polygongraph}$. 
\begin{figure}[!htb]
\begin{center}
    \scalebox{1}{\hspace*{3cm}\includegraphics{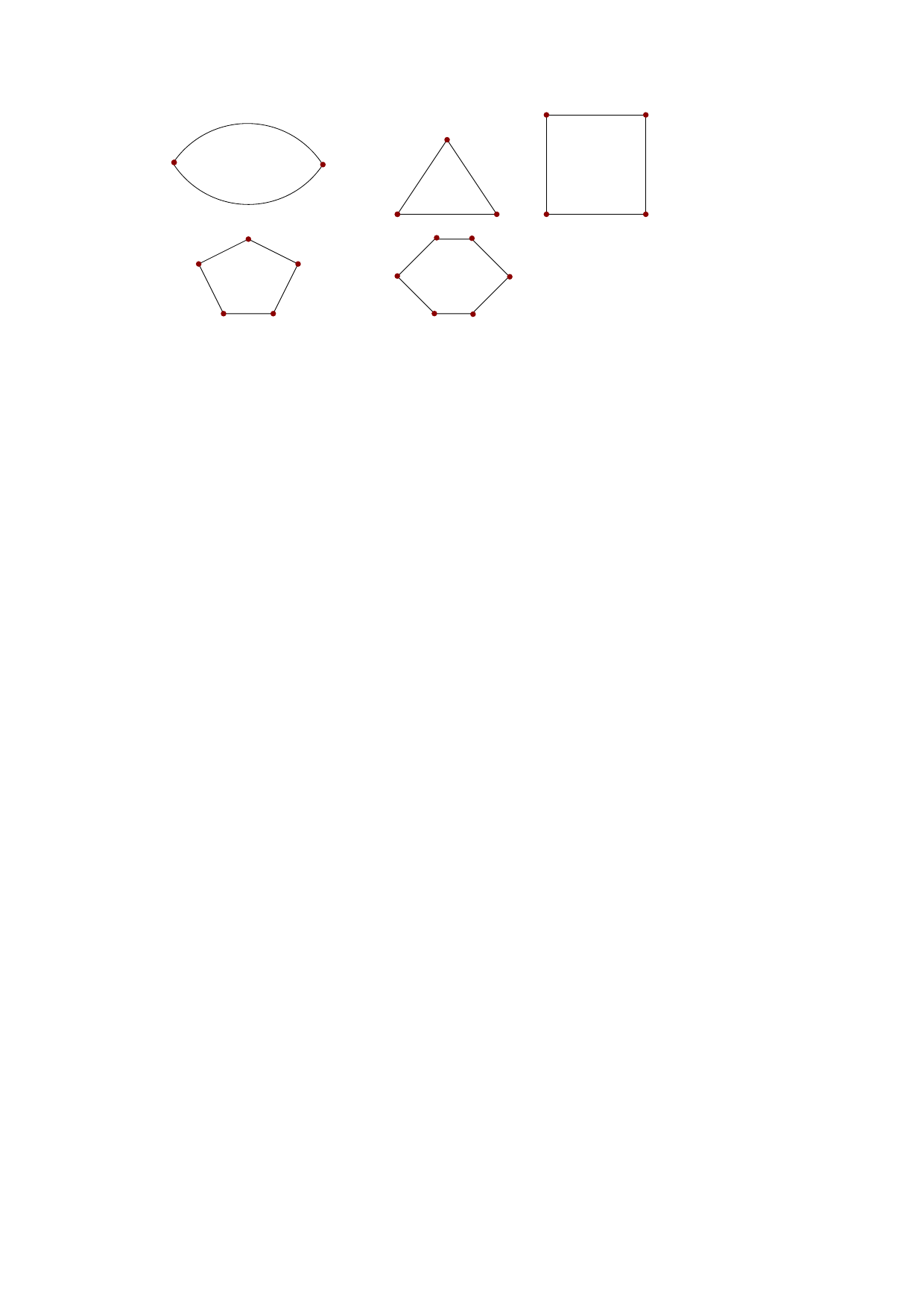}}
\vspace{-2cm}\caption{ Polygon graph $P_{n}$ for $n=2,3,4,5$ and $6$.}
\label{Polygongraph}
\end{center}
\end{figure}
The Polygon graph $P_{n}$ can be thought of as a connected some of $L_{n}$ and $L_{2}$ where the vertices of $L_{2}$ are identified with the two pendent vertices of $L_{n}$. For $n=8$ please see Figure $\ref{Connectedsum}$. 
\begin{figure}[!htb]
\begin{center}
\scalebox{1}{\hspace*{3cm}\includegraphics{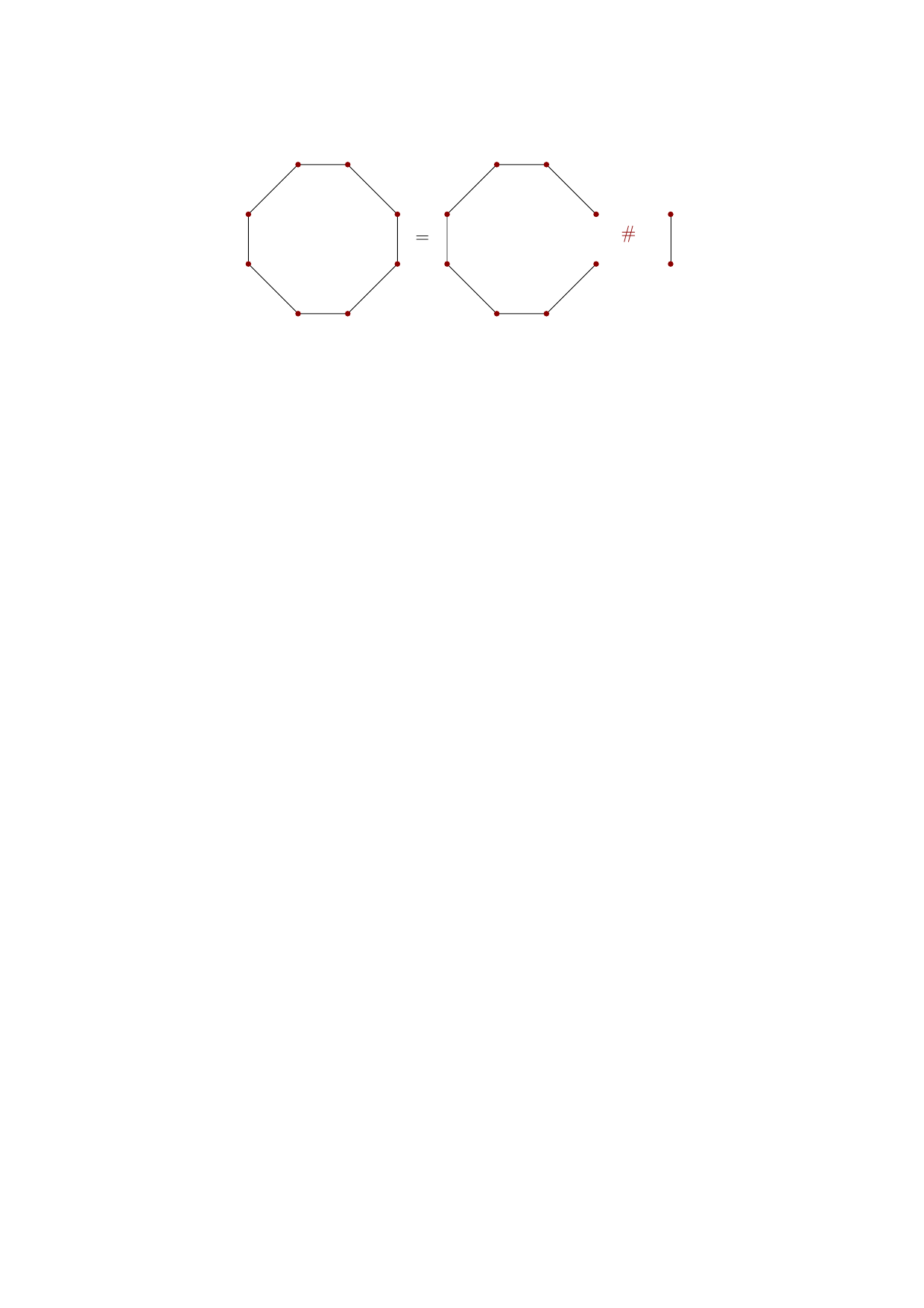}}
\caption{ Polygon graph $P_{8}$ as connected sum of $L_{8}$ and $L_2$.}
\label{Connectedsum}
\end{center}
\end{figure}
\begin{lemma}
    For all $ n\geq 2$ , the state sum decomposition of the polygon graph $P_{n}$ is given by $\mathscr{F}_{n}+ n \otimes \mathscr{F}_{n-2} \otimes n $ that is
    \[ v_{P_{n}}= \mathscr{F}_{n}+ n \otimes \mathscr{F}_{n-2} \otimes n. \]
\end{lemma}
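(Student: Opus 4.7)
The plan is to apply the Patching Lemma to the decomposition displayed in Figure~\ref{Connectedsum}, which exhibits $P_{n}$ as the connected sum of $L_{n}$ with $L_{2}$ obtained by identifying the two pendant vertices of $L_{n}$ with the two vertices of $L_{2}$. This reduces the computation of $v_{P_{n}}$ to an internal multiplication of the already-known tensors $v_{L_{n}} = \mathscr{F}_{n}$ (by the preceding lemma) and $v_{L_{2}} = y \otimes y + n \otimes n$, where the first and last coordinates of $\mathscr{F}_{n}$ are contracted in $\mathcal{M}$ against the two coordinates of $v_{L_{2}}$ respectively.

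To carry this out, I would decompose each Fibonacci tensor in $\mathscr{F}_{n}$ as $a \otimes \mathbf{m} \otimes b$ with $a,b \in \{y,n\}$ and $\mathbf{m}$ the middle $(n-2)$-fold tensor, and expand
\begin{align*}
v_{P_{n}} &= \sum_{a \otimes \mathbf{m} \otimes b \,\in\, \mathscr{F}_{n}} \Bigl[(a \cdot y) \otimes \mathbf{m} \otimes (b \cdot y) + (a \cdot n) \otimes \mathbf{m} \otimes (b \cdot n)\Bigr].
\end{align*}
Using the multiplication rules of $\mathcal{M}$ that govern the previous lemma, namely $y \cdot y = y$, $y \cdot n = n \cdot y = n$, and $n \cdot n = 0$, the first summand simplifies back to $a \otimes \mathbf{m} \otimes b$, whose total sum is exactly $\mathscr{F}_{n}$. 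The second summand survives only when $a = b = y$, in which case it contributes $n \otimes \mathbf{m} \otimes n$.

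It remains to identify the contributing middle tensors. A Fibonacci tensor $y \otimes \mathbf{m} \otimes y$ of length $n$ has the property that no consecutive pair of $n$'s can straddle either boundary $y$, so $\mathbf{m}$ itself ranges freely over all Fibonacci tensors of length $n-2$; the surviving second sum is therefore $n \otimes \mathscr{F}_{n-2} \otimes n$. Combining both contributions yields
\[
v_{P_{n}} \;=\; \mathscr{F}_{n} + n \otimes \mathscr{F}_{n-2} \otimes n,
\]
as claimed. There is no real obstacle here; the only piece of book-keeping worth noting is the base case $n=2$, where with the convention $\mathscr{F}_{0} = 1 \in \mathcal{M}^{\otimes 0}$ the formula specializes to $v_{P_{2}} = y \otimes y + 2\,n \otimes n$, which correctly records the two internal matchings of $P_{2}$ through its two parallel edges.
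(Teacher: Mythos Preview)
Your proof is correct and follows essentially the same route as the paper: both apply the Patching Lemma to the decomposition $P_{n} = L_{n} \# L_{2}$ along the two pendant vertices, then contract $\mathscr{F}_{n}$ against $\mathscr{F}_{2} = y\otimes y + n\otimes n$ using the rules $a\cdot y = a$ and $n\cdot n = 0$, observing that the extra term $n\otimes \mathbf{m}\otimes n$ survives precisely when the boundary letters are both $y$, i.e.\ when $\mathbf{m}$ ranges over $\mathscr{F}_{n-2}$. The only cosmetic difference is that the paper splits into four explicit cases on the pair $(a,b)$ whereas you treat them uniformly; your added remark on the $n=2$ base case with $\mathscr{F}_{0}=1$ is also consistent with the paper's explicit computation of $v_{P_{2}}$.
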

\begin{proof}
    Due to the connected sum description of $P_{n}$, it is necessary to invoke the Patching Lemma. There are $4$ types of Fibonacci tensors. 
    \begin{case}
        The Fibonacci tensor $F$ starting and ending with $n$.
    \end{case}
    \begin{case}
         The Fibonacci tensor $F$ starting with $y$ and ending with $n$.
    \end{case}
    \begin{case}
          The Fibonacci tensor $F$ starting with $n$ and ending with $y$. 
    \end{case}
    \begin{case}
          The Fibonacci tensor $F$ starting and ending with $y$.
    \end{case}
    In cases $1$, $2$ and $3$ multiplying by $\mathscr{F}_{2}= y \otimes y + n \otimes n$ along the boundary elements of $F$ yields $F$ by virtue of the relation $n \cdot n = n^{2}= 0 $ in the Matching algebra $\mathcal{M}$. \\ \\
    However, Case $4$ requires some analysis. Any Fibonacci $n$-tensor $F$ starting and ending with $y$ can be written as $y \otimes F^{'} \otimes y$ where $F^{'}$ is a Fibonacci $(n-2)$- tensor. In the opposite direction any Fibonacci $(n-2)$-tensor yields a Fibonacci $n$- tensor by adding $y$ to its boundary. In this case, multiplying by $\mathscr{F}$ along the boundary yields two simple tensors: 
    \[ (y^2) \otimes F^{'} \otimes (y^{2}) + (n \cdot y) \otimes F^{'} \otimes (y \cdot n) = y \otimes F^{'} \otimes y +n \otimes F^{'} \otimes n \] 
    Summing over all Fibonacci $(n-2)$- tensors we get the desired conclusion. 
\end{proof}
Before we proceed to the next lemma, we calculate a few values of $v_{P_{n}}$. 
\begin{align*}
    v_{P_{2}} &=  (n \otimes n + y \otimes y) + (n \otimes n) &= 2 \cdot n \otimes n + y \otimes y \\ 
    v_{P_{3}} &= (y \otimes y \otimes y + n \otimes n \otimes y + y \otimes n \otimes n) +(n \otimes y \otimes n) &= y \otimes y \otimes y +\sum_{cyclic} n \otimes n \otimes y \\
    v_{P_{4}} &= 2\cdot n \otimes n \otimes n \otimes n + y \otimes y \otimes y \otimes y + \sum_{cyclic} n \otimes n \otimes y \otimes y & \\ 
\end{align*}
The cyclic summation appearing in these formulas is not a coincidence. Clearly, the cyclic group $\mathbb{Z}_{n}$ canonically acts on the Polygon graph $P_{n}$ by rotation, which preserves the distinguished vertices. It implies that if $\epsilon_{1} \otimes \ldots \otimes \epsilon_{i-1} \otimes \epsilon_{i}\otimes \ldots   \otimes\epsilon_{n}$ appears in the state sum decomposition $v_{P_{n}}$ of $P_n$ then the term $\epsilon_{i} \otimes \ldots \otimes \epsilon_{n} \otimes \epsilon_{1} \otimes \ldots \otimes \epsilon_{i-1}$ must also appear with the same coefficient.  If $n$ is even then the coefficient of $\left(n\otimes \ldots \otimes n \right)$ is $2$.  This is because there are exactly two perfect matching for the Polygon graph $P_{2k}$. Otherwise, the coefficient of each term will be $1$ or $0$. Also note that each term appearing in the expansion of $n \otimes n \otimes \mathscr{F}_{n-2}$ is a Fibonacci $n$ tensor. $n \otimes \mathscr{F}_{n-2} \otimes n$ is a cyclic summation of $n \otimes n \otimes \mathscr{F}$.  \\ \\ Thus, for odd positive integer $n$, $(\mathscr{F}_{n} + n \otimes \mathscr{F}_{n-2} \otimes n)$ can be thought of as the "smallest" element in $\mathcal{M}^{\otimes n}$ containing  $\mathscr{F}_{n}$ that is $\mathbb{Z}_{n}$ invariant. Here, the generator $\sigma \in \mathbb{Z}_{n}$ acts on a simple tensor $\epsilon_{1} \otimes \ldots \otimes \epsilon_{n}$ as follows
\[ \sigma\cdot \left( \epsilon_{1} \otimes \ldots \otimes \epsilon_{n} \right) = \epsilon_{n} \otimes \epsilon_{1} \otimes \ldots \otimes \epsilon_{n-1} \]

For an even positive integer $n$, we need to change the coefficient of $\left(n \otimes \ldots \otimes n \right)$ by $2$ after taking the $\mathbb{Z}_{n}$ "$cyclic-closure$".

Next we prove an observation which would help us organize the arguments in Theorem $2$ and Theorem $3$. Let $G=(V,E)$ be a graph with a choice of distinguished vertices $\{ v_1, \ldots , v_{n} \} \subset V$. Suppose $e \in  E$ is an edge whose both its ends points are distinguished vertices. We construct a new graph $G^{'}= \left( V \cup \{ w_1, \ldots , w_{2n} \}, (E- \{ e \})\cup \{e_1, \ldots, e_{2n}, e_{2n+1} \}   \right)$ by adding an even number of non-distinguished vertices $\{ w_1, w_2, \ldots, w_{2n} \}$ on the edge $e$ and adding a collection of new edges $\{ e_1, \ldots, e_{2n}, e_{2n+1} \}$ on $e$. Please, see Figure \ref{Observation}  when $n=3$ that is, $6$ new vertices $\{w_1,\ldots, w_{6}  \}$ have been added. In the diagram distinguished vertices have been colored red and the non-distinguished vertices have been colored black. 
\begin{figure}[!htb]
\begin{center}
    \scalebox{1}{\hspace*{6cm}\includegraphics{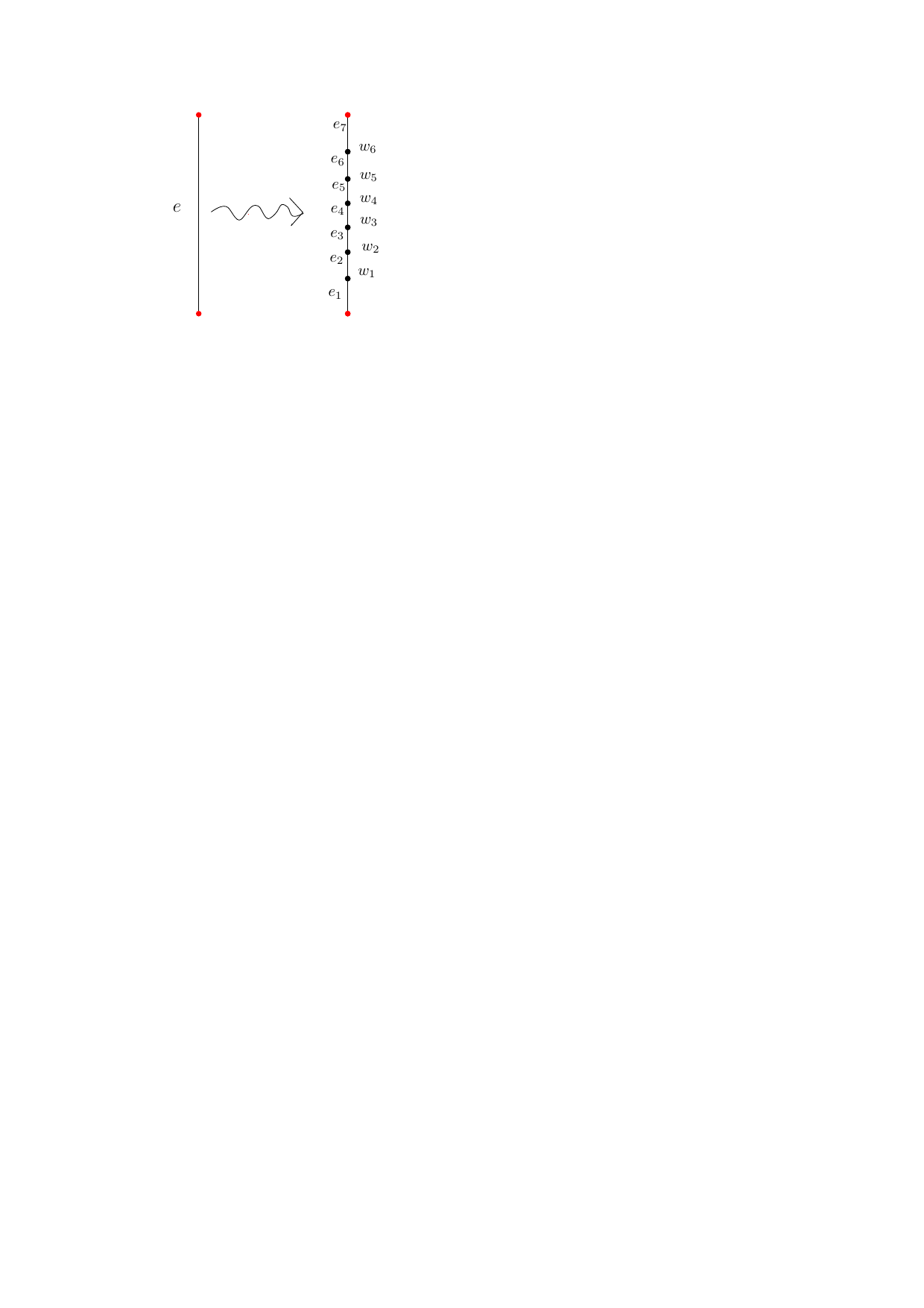}}
\caption{ Part of the graph $G^{'}$ for $n=3$.}
\label{Observation}
\end{center}
\end{figure}

\begin{lemma} \label{important}
    The graphs $G$ and $G'$ have the same state sum decomposition that is 
    \[ v_{G} = v_{G^{'}}\]
\end{lemma}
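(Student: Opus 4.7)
The plan is to localize the equality via the Patching Lemma. Both $G$ and $G'$ coincide outside the single edge $e$, so each decomposes as $G\setminus e$ (with its two endpoints $v$ and $v'$ of $e$ promoted to distinguished vertices) glued along the two-vertex cut $\{v,v'\}$ to a local piece. The local piece is just the edge $e$ itself in the case of $G$, and the subdivided path $P$ with interior vertices $w_1,\ldots,w_{2n}$ and edges $e_1,\ldots,e_{2n+1}$ in the case of $G'$. If both local pieces have the same state sum decomposition in $\mathcal{M}^{\otimes 2}$ (indexed by the ordered pair $(v,v')$), then the Patching Lemma forces $v_{G}=v_{G'}$.

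First, I would compute the state sum of the single edge $e$ regarded as a standalone graph with $v,v'$ distinguished. The only choice to record is whether $e$ is used: if it is, both endpoints get label $n$ (contributing $n\otimes n$); if it is not, both remain free for an external match (contributing $y\otimes y$). Hence this local state sum is exactly $\mathscr{F}_{2}=y\otimes y+n\otimes n$.

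Second, I would compute the state sum of the subdivided path $P$, with $v,v'$ distinguished and $w_1,\ldots,w_{2n}$ non-distinguished. Since every $w_i$ must be matched inside $P$, a parity count along the path pins down exactly two admissible configurations: the interior pairing $w_{2i-1}\!-\!w_{2i}$ for $i=1,\ldots,n$, which leaves both $v,v'$ free and contributes $y\otimes y$; and the outer pairing $v\!-\!w_1,\ w_2\!-\!w_3,\ \ldots,\ w_{2n}\!-\!v'$, which consumes both endpoints and contributes $n\otimes n$. Any mixed \textquotedblleft one-endpoint-in, one-endpoint-out\textquotedblright\ configuration leaves an odd-length interior subpath and is therefore impossible. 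Thus the local state sum of $P$ is also $\mathscr{F}_{2}$.

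Third, applying the Patching Lemma along the cut $\{v,v'\}$ to both $G$ and $G'$, each state sum decomposition becomes the same internal product of $v_{G\setminus e}$ (with $v,v'$ added to its distinguished set) with $\mathscr{F}_{2}$, yielding $v_{G}=v_{G'}$. The one delicate step, and where I would be most careful, is the parity case analysis for $P$: one must verify that the two valid matchings each occur with coefficient exactly $1$ so that no spurious higher-order terms appear, and that the evenness of $2n$ is precisely what allows both the \textquotedblleft all-in\textquotedblright\ and \textquotedblleft all-out\textquotedblright\ configurations to be realized simultaneously. Once this is checked, the Patching Lemma delivers the lemma immediately.
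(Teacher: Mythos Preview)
Your proposal is correct and follows essentially the same approach as the paper: both localize to the edge $e$ (respectively the subdivided path), observe that in each case the two endpoints contribute exactly $y\otimes y + n\otimes n$ via the same pair of matchings you describe, and conclude equality. The only cosmetic difference is that you invoke the Patching Lemma explicitly to formalize the localization, whereas the paper argues the two contributions directly without naming the lemma.
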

\begin{proof}
    The state sum decomposition of $e$ is given by $\mathscr{F}_{2}= y \otimes y + n \otimes n$. If the edge $e$ contributes in a partial matching along the distinguished vertices of $G$ then the term $n \otimes n$ contributes in this case. However, if the edge $e$ does not contribute to the partial matching along the distinguished vertices then both of the boundary vertices are free. Thus, the term $y \otimes y$ accounts for the situation. Next, we analyze the graph $G^{'}$. Note that the added vertices $\{ w_1, \ldots, w_{2n} \}$ are non-distinguished vertices. Hence, in any partial matching along the distinguished vertices of $G^{'}$ they must be matched in $G$. There are exactly two ways to achieve this. If $w_{2i-1}$ gets matched with $w_{2i}$ for all $1 \leq i \leq n$ then the boundary distinguished vertices would remain free. Hence, it shall contribute the term $y \otimes y$. Otherwise, $w_1$ and $w_{2n}$ get matched with their nearest distinguished boundary point on the edge $e$ respectively and $w_{2i}$ gets matched with $w_{2i+1}$ for all $1 \leq i \leq n-1$. In this case, we get the term $n \otimes n$ as both of the distinguished vertices have already been matched. This proves our lemma. 
\end{proof}

\begin{corollary}
We have the following equality of state sum decompositions,
\begin{figure}[!htb]
\begin{center}
    \scalebox{1}{\hspace*{4cm}\includegraphics{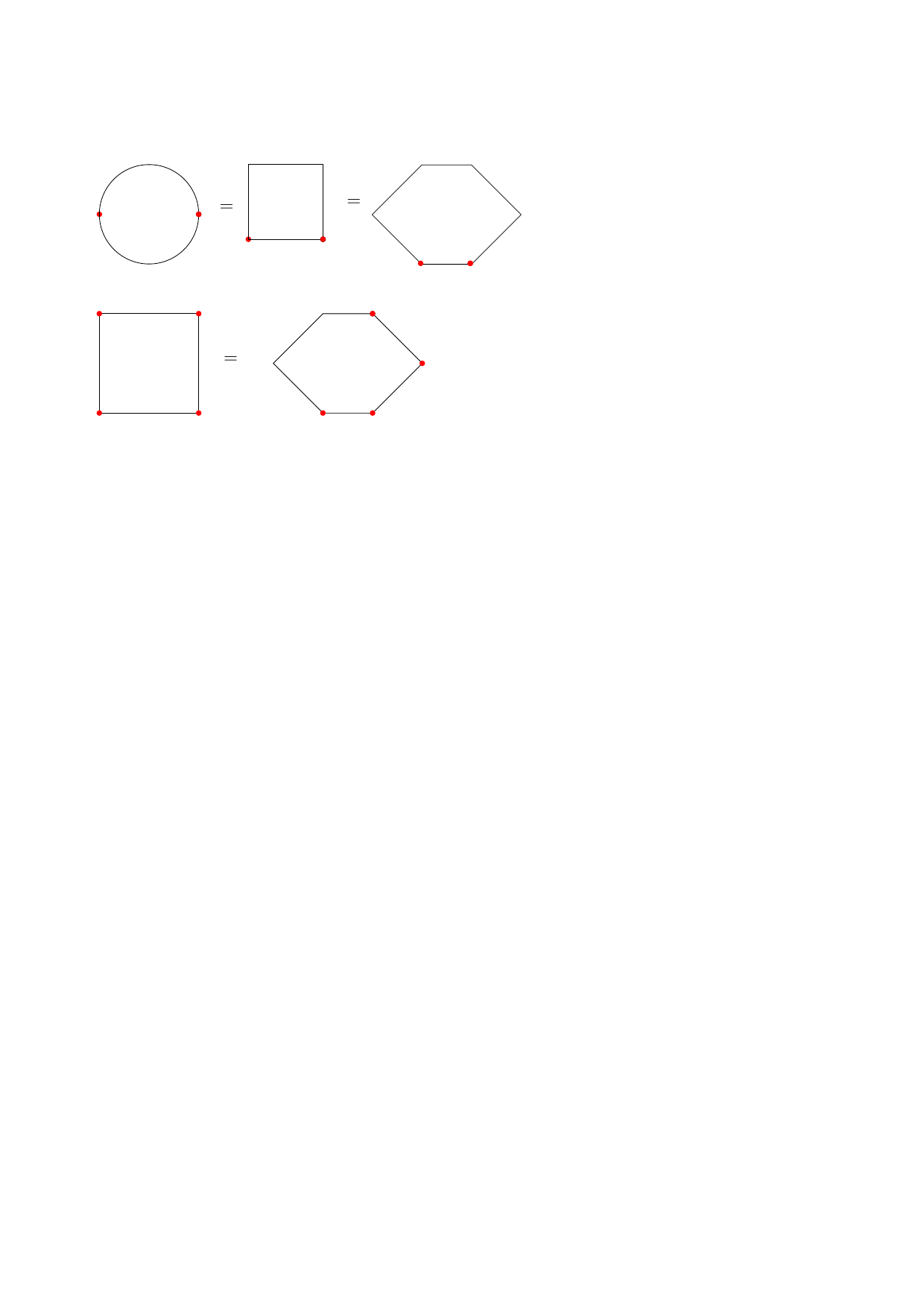}}
\caption{The equality of diagrams represent the equality of state sum decompositions.}
\label{equality}
\end{center}
\end{figure}
where the distinguished vertices are colored red and the equality in the Figure \ref{equality} represents the equality of state sum decompositions.    
\end{corollary}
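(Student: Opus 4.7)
The plan is to derive the corollary directly from Lemma \ref{important}, since Lemma \ref{important} asserts that inserting an even number of non-distinguished vertices (with the corresponding chain of edges) along an edge whose endpoints are both distinguished does not alter the state sum decomposition. Thus, whatever transformation the figure depicts between two configurations of distinguished vertices should be exhibited as an iterated application of this single local move, possibly combined with the Patching Lemma from the appendix in order to localize the comparison to the subgraph where the two pictures differ.

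First I would identify, in the two diagrams whose equality is asserted in Figure \ref{equality}, the maximal common subgraph together with its distinguished vertices, and excise the portions that differ. Because state sum decompositions patch along distinguished vertices (by the Patching Lemma), the asserted equality $v_{G_1} = v_{G_2}$ will reduce to the equality of the state sum decompositions of the two local pieces that differ, viewed as graphs whose distinguished vertices are the interface vertices where the excised pieces were glued to the common part. Second, I would recognize each local piece as an edge $e$ between two distinguished vertices with some even number $2n$ of subdivision vertices inserted (and the appropriate chain of connecting edges). An application of Lemma \ref{important}, possibly iterated if the two sides correspond to different values of $n$, then yields the common state sum decomposition $y \otimes y + n \otimes n$ for each interface-piece, and re-patching concludes the proof.

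The main obstacle I anticipate is purely bookkeeping rather than conceptual: one must check that the distinguished/non-distinguished vertex markings in the two diagrams are compatible with the hypotheses of Lemma \ref{important}, i.e.\ that the vertices being introduced are genuinely non-distinguished and that the endpoints of the edge being subdivided are distinguished, so that the lemma applies verbatim. Once that verification is done, invariance of the state sum decomposition under the depicted transformation is immediate, and the corollary follows without any further computation in the matching algebra $\mathcal{M}$.
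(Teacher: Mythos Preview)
Your proposal is correct and matches the paper's approach: the paper presents this statement as an immediate corollary of Lemma~\ref{important} with no separate proof, and you have correctly identified that the equality follows by (possibly iterated) applications of that lemma to strip away the even chains of non-distinguished subdivision vertices. The Patching Lemma is not strictly needed here since Lemma~\ref{important} already compares full state sum decompositions, but invoking it does no harm.
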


We shall require one last ingredient before we can complete the proof of Theorem $2$. Suppose there are $k$ graphs $\{ G_{l} \}_{l=1}^{k}$. Each graph $G_{l}$ has a collection of $i_{l}$ distinguished vertices. For all $i$ and for each distinguished vertex $v \in G_{i}$ we connect it to another unique distinguished vertex $w \in G_{j}$ for $j \neq i$ by an edge. See Figure \ref{convolutions} for a schematic picture. Let $G$ denotes the new graph. We aim to extract the enumeration of perfect matching $M(G)$ from the state sum decomposition $v_{G_1}, v_{G_{2}}, \ldots , v_{G_{k}}$. For $k=2$ we shall state it in the form of a lemma.  For $k \geq 3$ we shall describe it in words without getting bogged down by the notational complexity. 
\begin{lemma}
    Let $G_{1}= (V_1,E_1)$ and $G_{2}= (V_2, E_2)$ are two graphs with a choice of distinguished vertices $\{ v_1, \ldots , v_{k} \} \subset V_{1}$ and $\{w_1, \ldots, w_{k} \} \subset V_{2}$ for $G_1$ and $G_2$ respectively. Construct a new graph $G$ by adding a collection of edges $\{ e_{i} \}_{i=1}^{k}$ such that the edge $e_{i}$ connects $v_{i}$ to ${w_{i}}$. Let $v_{G_1} = \sum_{\epsilon_{i} \in \{ y,n \} } G_1(\epsilon_1, \ldots, \epsilon_{k})  \epsilon_{1} \otimes \ldots \otimes \epsilon_{k} $ and $v_{G_2} = \sum_{\epsilon_{i} \in \{ y,n \} } G_2(\epsilon_1, \ldots, \epsilon_{k})  \epsilon_{1} \otimes \ldots \otimes \epsilon_{k} $ denote the state sum decompositions of $G_1$ and $G_2$ respectively then, 
    \[ M(G)= \sum_{\epsilon_{i} \in \{y,n \} } G_1(\epsilon_1, \ldots , \epsilon_{n} ) \cdot G_2(\epsilon_{1}, \ldots, \epsilon_{n} )  \]
\end{lemma}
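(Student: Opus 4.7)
The plan is to establish the identity via a direct bijection between perfect matchings of $G$ and compatible pairs of partial matchings of $G_1$ and $G_2$, then organize the enumeration by the tuple $\epsilon \in \{y,n\}^k$ recording which of the connecting edges are used.

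First I would decompose any perfect matching $M$ of $G$ canonically as $M = M_1 \sqcup M_2 \sqcup M_e$, where $M_j := M \cap E_j$ for $j = 1,2$ and $M_e \subseteq \{e_1, \ldots, e_k\}$. Because each connecting edge $e_i$ is incident only to the distinguished vertices $v_i \in V_1$ and $w_i \in V_2$, every non-distinguished vertex of $V_j$ must already be saturated by $M_j$ alone. Hence $M_j$ is a partial matching of $G_j$ saturating all of its non-distinguished vertices --- precisely the sort of partial matching enumerated by the coefficient $G_j(\epsilon_1, \ldots, \epsilon_k)$ in the state sum decomposition $v_{G_j}$.

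Next I would record the compatibility of state tuples. For a partial matching $M_j$ of $G_j$, set $\epsilon^{M_j}_i = n$ if the $i$-th distinguished vertex is saturated by $M_j$, and $\epsilon^{M_j}_i = y$ otherwise. Since $M$ must cover each $v_i$ (resp.\ $w_i$) exactly once, and the only edge of $G$ incident to $v_i$ outside $G_1$ is $e_i$ (likewise for $w_i$), we obtain the equivalences $e_i \in M_e \iff \epsilon^{M_1}_i = y \iff \epsilon^{M_2}_i = y$. Thus the two tuples coincide, $\epsilon^{M_1} = \epsilon^{M_2} =: \epsilon$, and $M_e = \{e_i : \epsilon_i = y\}$ is forced by $\epsilon$. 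Conversely, any pair $(M_1, M_2)$ of partial matchings saturating the non-distinguished vertices of $G_1, G_2$ and sharing a common $\epsilon$-tuple glues with the forced $M_e$ to a unique perfect matching of $G$.

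Collecting the bijection by state tuple, the number of perfect matchings of $G$ with prescribed $\epsilon$ equals $G_1(\epsilon) \cdot G_2(\epsilon)$, and summing over $\epsilon \in \{y,n\}^k$ yields the claimed formula. No substantive obstacle is anticipated, as the argument is essentially definitional once one unpacks what $G_j(\epsilon)$ counts; the only care needed is matching the ``free vs.\ saturated'' dichotomy at each distinguished vertex to the symbols $y$ and $n$ used in $\mathcal{M}^{\otimes k}$. An algebraic alternative would view each edge $e_i$ as a copy of $L_2$ with state sum $\mathscr{F}_2 = y \otimes y + n \otimes n$ and iterate the Patching Lemma $k$ times at independent pairs of coordinates, producing the same sum; I expect the bijective route to be cleaner since it avoids the notation of simultaneous two-coordinate contractions.
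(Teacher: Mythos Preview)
Your proposal is correct, but it takes a different route from the paper. The paper argues algebraically inside the matching algebra $\mathcal{M}$: it views each connecting edge $e_i$ as a copy of the line graph $L_2$ with state sum $\mathscr{F}_2 = y \otimes y + n \otimes n$, computes the four products $\epsilon \cdot (y \otimes y + n \otimes n) \cdot \epsilon'$ explicitly, observes that an $n \otimes n$ term appears if and only if $\epsilon = \epsilon'$, and then invokes the Patching Lemma to read off $M(G)$ as the coefficient of $n \otimes \cdots \otimes n$ in the resulting state sum. You instead give a direct bijective argument at the level of matchings, unpacking what the coefficients $G_j(\epsilon)$ count and never touching the algebra. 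Your approach is more elementary and self-contained---it does not rely on the Patching Lemma or on the multiplication rules in $\mathcal{M}$---while the paper's approach stays within the algebraic framework of the appendix and dovetails with the $k$-fold convolution formula that follows the lemma. You in fact anticipate the paper's argument in your final sentence, so you already see both routes; either is fine here.
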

\begin{proof}
    Recall that the enumeration of perfect matching is given by the coefficient $G(n, \ldots, n)$ in its state sum decomposition.  Adding an edge along two distinguished vertices corresponds to taking connected sum of the line graph $L_{2}$ along those vertices . Recall that the state sum decomposition of $L_{2}$ is given by $\mathscr{F}_{2}= y \otimes y + n \otimes n$. 
    \begin{align*}
        y \cdot \left( y \otimes y + n \otimes n \right) \cdot y &= y \otimes y + \mathbf{ n \otimes n } \\
        y \cdot \left( y \otimes y + n \otimes n \right) \cdot n &= y \otimes n \\ 
        n \cdot \left( y \otimes y + n \otimes n \right) \cdot y &= n \otimes y \\ 
        n \cdot \left( y \otimes y + n \otimes n \right) \cdot n &= \mathbf{ n \otimes n } \\ 
    \end{align*}
    From the multiplication table we conclude that $\epsilon \cdot \left( y \otimes y + n \otimes n \right) \cdot \epsilon^{'}  $ yields an $n \otimes n$ if and only if $\epsilon = \epsilon^{'}$. Thus, by the Patching Lemma we get the desired conclusion. 
\end{proof}

\begin{figure}[!htb]
\begin{center}
\scalebox{0.35}{\hspace*{4cm}\includegraphics{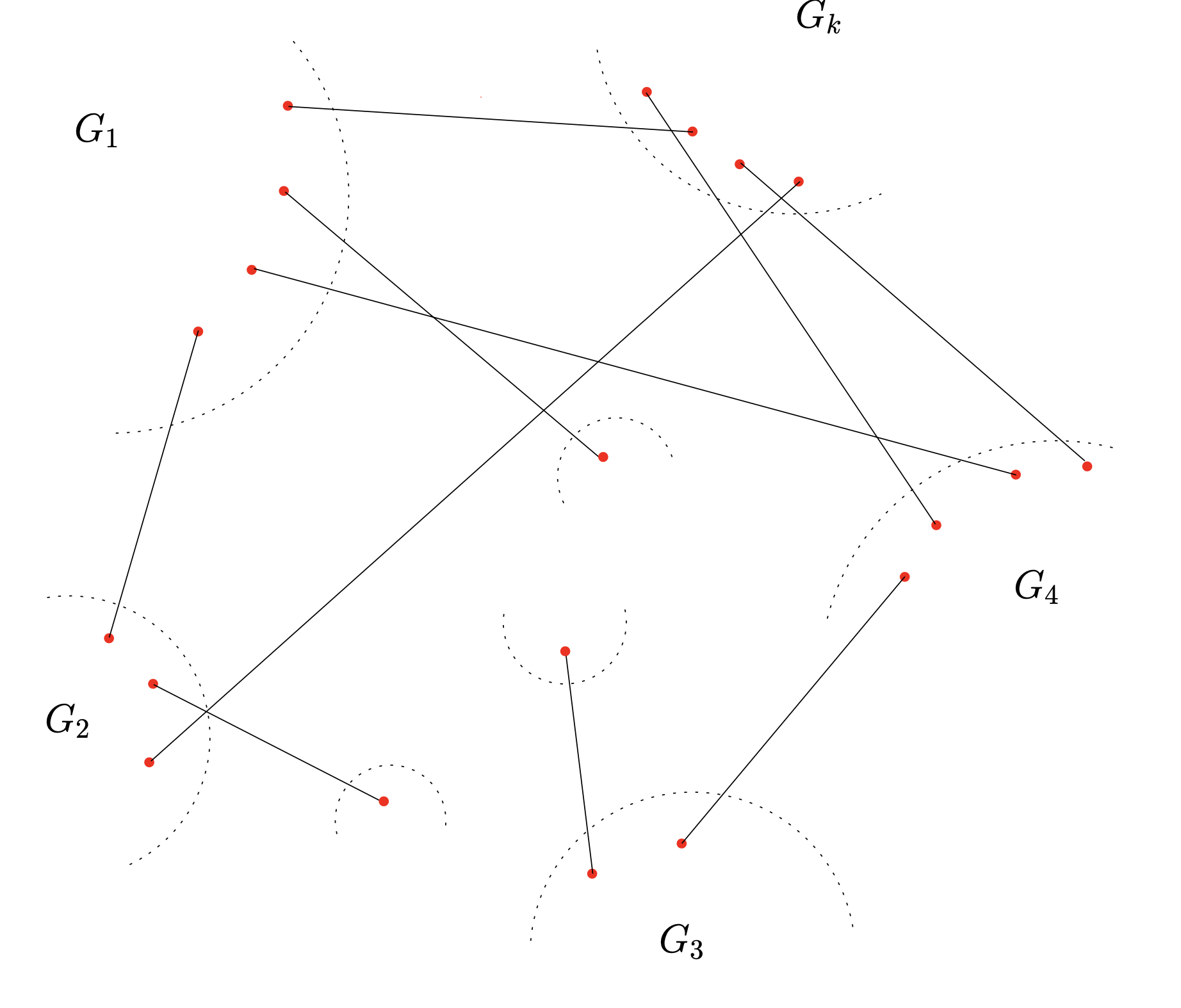}}
\caption{The distinguished vertices are colored red.} 
\label{convolutions}
\end{center}
\end{figure}

For general $k \geq 3$ the formula is quite difficult to write down explicitly. $M(G)$ is equal to the sum of particular $k$ products of coefficients in the state sum decomposition of $\{ v_{G_{i}} \}_{i=1}^{k}$.  To be more precise, if 
\[ M_{G_{1}} (\epsilon_{11}, \ldots, \epsilon_{1i_1}) \cdot M_{G_{2}} (\epsilon_{21}, \ldots, \epsilon_{2i_{2}}) \ldots M_{G_{k}}(\epsilon_{k1}, \ldots, \epsilon_{ki_{k}})  \] is one such $k$ product and $m-$ th distinguished vertex of $G_{i}$ gets connected to the $n$-th distinguished vertex of $G_{j}$ then $\epsilon_{im}= \epsilon_{jn}$ for all $m$ and $i$.  \\ \\ 
\begin{proof}[An alternative proof of Theorem \ref{5}]
     This follows from Lemma $7$ and its generalization for $k \geq 3$. Here we choose $G_{i}$ as the $n_{i}$ Polygon graph $P_{n_{i}}$ embedded in the disk $B_{\delta_{i}}(v_{i})$. The distinguished vertices of $G_{i}$ are chosen to be the vertices of the  Polygon graph $P_{n_{i}}$.  By the construction of $G$, each distinguished vertex in $G_{i}$ is connected uniquely to another distinguished vertex in $G_{j}$ by an edge in $e$ of $g$ that connects the vertex $v_{i}$ to $v_{j}$. By Lemma $5$, the state sum decomposition of $P_{n_i}$ is the $\mathbb{Z}_{n_{i}}$ "$cyclic-closure$" for odd $n_{i}$. For even $n_{i}$ we change the coefficient of  $(n \otimes \ldots \otimes n) $ by $2$ after taking the "cyclic-closure". Each special $k$ product of the coefficients of state sum decomposition of $G_{i}$ that contributes in $ \left( n \otimes \ldots \otimes n \right)$ extends uniquely to a "Lucas-Coloring" of $g$. Conversely any "Lucas-Coloring" of $g$ extends uniquely to special non-zero $k$ product of the coefficients of the state sum decomposition of $G_{i}$ that contributes in $ \left( n \otimes \ldots \otimes n \right)$.  This concludes the equality 
    \[ M(G)= m(g).\] 
\end{proof}

Armed with these constructions we can finally turn to the problem of Ciucu and Krattenthaler. Note that the dual configuration $T_a$ consists of a collection of special hexagons. Two such special hexagons are connected by either $2$, $4$ or $6$ edges. For $a=5$ see the Figure \ref{identification} where the centre of the special hexagons are colored blue. We select the distinguished vertices as those which can be connected by an edge between special hexagons. They are colored red in Figure \ref{identification}. 
\begin{figure}[!htb]
\begin{center}
\scalebox{0.35}{\hspace*{4cm}\includegraphics{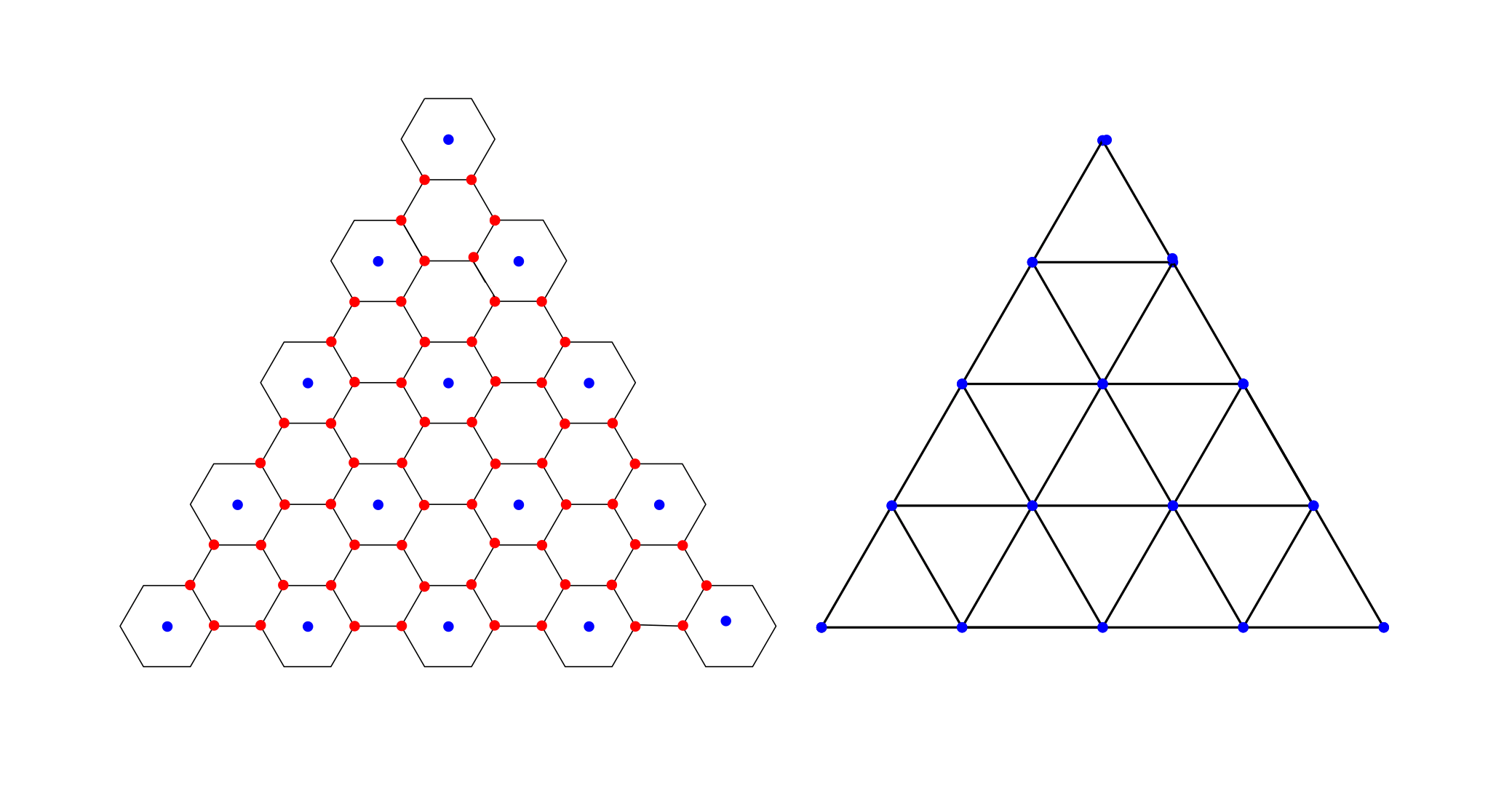}}
\caption{The center of the special hexagons are colored blue.} 
\label{identification}
\end{center}
\end{figure}

From the weak dual graph $T_a$ we construct a new graph $t_a$ as follows. The vertices are given by the centers of the special hexagons that is the blue points in Figure \ref{identification}. We add an edge to $i$-th vertex to the $j$-th vertex if and only if there is an edge connecting $i$-th special hexagon to the $j-th$ special hexagon. In Figure $\ref{identification}$ we have drawn $t_{5}$ in the right hand side.
\begin{proof}[Proof of Theorem \ref{4}:]
By Lemma $5$, Lemma $6$ and Corollary $1$, state sum decomposition of the special hexagons are given by first taking the "cyclic closure" of "$\mathscr{F}_{2}, \mathscr{F}_{4}$ and $\mathscr{F}_{6}$ if it has $2,4$ and $6$ distinguished vertices respectively. Then we change the coefficient of $(n \otimes n)$, $(n \otimes n \otimes n \otimes n)$ and $(n \otimes n \otimes n \otimes n \otimes n \otimes n)$ by $2$ for $n=2,4$ and $6$ respectively. Note that in $T_{a}$ there are exactly $\frac{a \cdot (a+1)}{2}$  special hexagons.  Now by Lemma $6$ and the general case $k= \frac{a\cdot(a+1)}{2}$ if the particular $k$ product of coefficients in the state sum decomposition of the special hexagons contribute in $(n \otimes \ldots \otimes n)$ then it extends uniquely to an edge coloring $A$ of $t_{a}$. This particular edge coloring is a $Lucas-Coloring$ as described in Section $2$. This is because of the fact that the state sum decomposition of special hexagons are given by  the "cyclic-closure" of $\mathscr{F}_{2}, \mathscr{F}_{4}$ and $\mathscr{F}_{6}$. Conversely any $Lucas-Coloring$ $A$ of $t_{a}$ extends uniquely to particular $k$ product of coefficients that contributes in $(n \otimes \ldots \otimes n)$. Finally the value of each $k$ product is equal to $2^{Sp(A)}$.  Hence we conclude: 
\[ M(T_{a}) = \sum_{A \in  Luc(t_a)} 2^{Sp(A)}.\]
\end{proof}

To illustrate the theorem we perform the calculation explicitly for the case $n=3$. There are exactly $10$ different $Lucas-Colorings$ of the graph $t_{3}$. See Figure $\ref{calculation}$. 
\begin{figure}[!htb]
\begin{center}
\scalebox{0.5}{\hspace*{2cm}\includegraphics{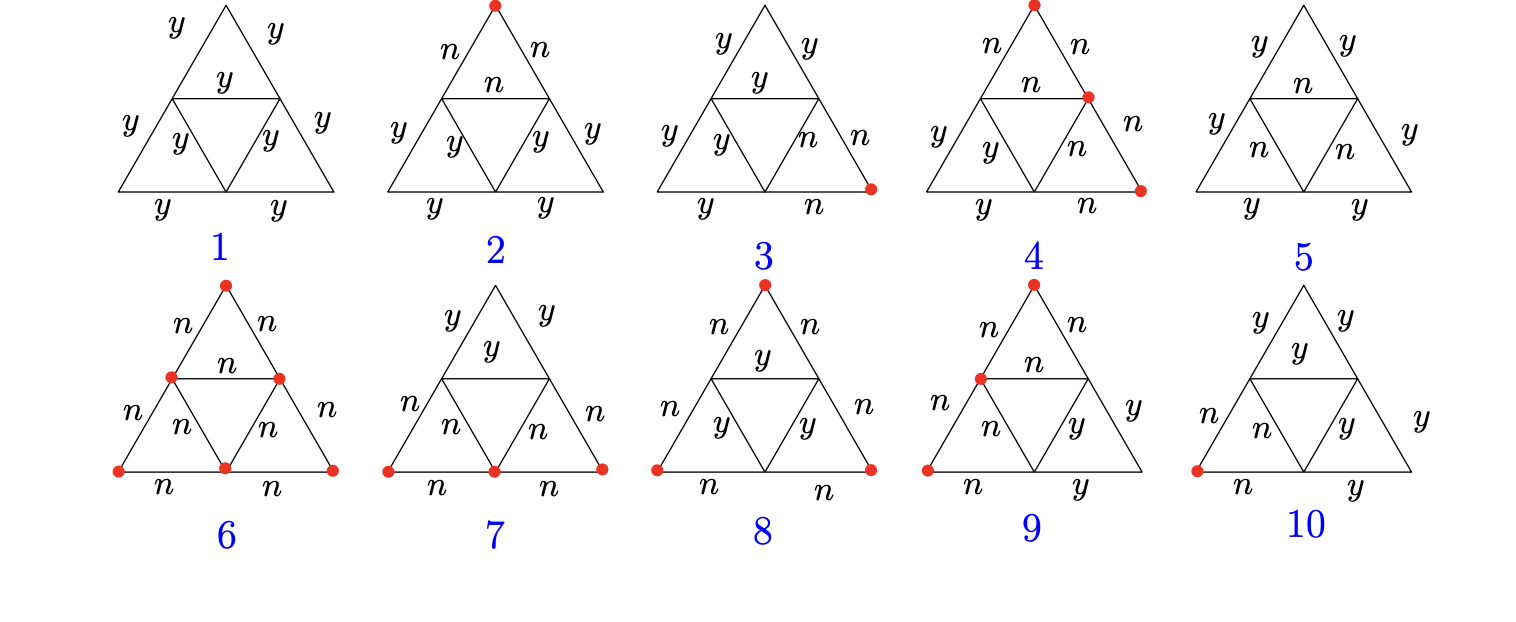}}
\caption{The special vertices are colored red.} 
\label{calculation}
\end{center}
\end{figure}
Thus, by theorem $2$ we get 
\[M(T_{3}) = 2^{0} + 2^{1}+ 2^{1}+ 2^{3} + 2^{0}+ 2^{6}+ 2^{3}+2^{3}+ 2^{3}+ 2^1=104= 2^{3} \cdot 13. \]

\appendix
\section{Matching Algebras and Other Results}\label{sec:novel}

Assume $G_1=(V_1,E_1)$ and $G_2=(V_2, E_2)$ are two graphs with a choice of $n$-many distinguished vertices of $V_i$ for $i=1, 2$. Call them $\{x_1, \ldots, x_n \} \subset V_1$ and $\{y_1, \ldots, y_n \} \subset V_2$ respectively. We define the \textit{connected sum} along these distinguished vertices to be the new graph   \[ G_1 \# G_2 := G_1 \cupdot G_2 / x_i \sim y_i , \forall 1 \leq i \leq n \] 
where $\cupdot$ denotes the disjoint union.  
The vertex set of $G_1 \# G_2$ is given by $V_1 \cupdot V_2 / x_i \sim y_i \forall 1 \leq i \leq n $ and the edge set is given by $E_1 \cupdot E_2 $. 
For any graph $G$, let $M(G)$ denotes the number of perfect matching of $G$. The goal is to understand $M(G_1 \# G_2)$ in terms of $M(G_1)$ and $M(G_2)$.

We define an algebra structure that captures the behavior of  $M(G_1 \# G_2)$ along the boundary. 
Define the \textit{matching algebra} $\mathcal{M}$ over $\mathbb{Z}$ with two generators $y$ and $n$ given by the following relations
\[ \mathcal{M} := \mathbb{Z} \left<y,n \right>/ \left<   yn=yn=n; n^2=0; y^2=y \right>\]
 Here the variable $y$ indicates \textit{yes} or \textit{presence} and the variable $n$ indicates \textit{no} or \textit{absence}. For a graph $G$ with a choice of $n$ distinguished vertices $\{x_1, \ldots, x_n \} \subset V $ we shall assign an element $v_{G} \in \mathcal{M}^{\otimes n} $ of the form \[ v_{G}= \sum_{\epsilon_{i} \in \{ y,n \}} G(\epsilon_1, \ldots ,\epsilon_n) \epsilon_1 \otimes \ldots \otimes \epsilon_n. \] where $G(\epsilon_1, \ldots , \epsilon_{n}) \geq 0$. Define an involution $ \bar{\epsilon}$ which switches $y$ to $n$ and $n$ to $y$. That is, as a set we have \[ \{\epsilon, \bar{\epsilon} \}= \{ y,n\}. \] 
 \subsection{The state sum decomposition of $G$}
 For any graph $G= (V,E)$ with $n$ distinguished vertices $\{v_1, \ldots, v_n \} \subset V$ we associate a non-negative integer $G(\epsilon_1, \ldots, \epsilon_n)$ defined as the number of perfect matching of the sub-graph  of $G$ where we delete the $i$-th distinguished vertex if $\epsilon_i = y$ or we keep it if $\epsilon_i= n$. Note that we have the following identity
 \[G(n,\ldots, n)= M(G). \]
 Said differently, $\epsilon_i=y$ indicates the situation where the identified vertex $v_i$ is available to match with a vertex in the other graph in the connected sum. Whereas, $\epsilon_i=n$ indicates the situation where the identified vertex $v_i$ is already matched with some other vertex in $G$.  With this understanding we associate the element $v_{G} \in \mathcal{M}^{\otimes n}$ which is a sum of $2^{n}$ terms. We call this the \textit{state sum expansion} of $G$ associated to $\{v_1, \ldots, v_n \}$.

 The main result in our previous work \cite{PaulSaikia} was the following theorem.
\begin{theorem}[Theorem 1, \cite{PaulSaikia}]\label{thm:main}
    For graphs $G_{1}= (V_1 , E_1)$ and $G_2=(V_2, E_2)$ with a choice of distinguished vertices as mentioned above, let $v_{G_i}$ denote the element defined as in the previous paragraph. Then  
    \[ M(G_1 \# G_2)= \sum G_{1}(\epsilon_1, \ldots, \epsilon_n ) G_{2}(\bar{\epsilon_1}, \ldots,  \bar{\epsilon_n}). \]
\end{theorem}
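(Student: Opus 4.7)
The plan is to partition the perfect matchings of $G_1 \# G_2$ according to which ``side'' each identified boundary vertex is matched on, and then show that the resulting fibers have exactly the product sizes appearing on the right-hand side. Since the edge set of the connected sum is $E_1 \cupdot E_2$ and each identified vertex $v_i$ is matched exactly once in any perfect matching $M$, the unique edge of $M$ incident to $v_i$ lies in either $E_1$ or $E_2$ but not both. Define
\[
\epsilon_i(M) := \begin{cases} n & \text{if the edge of $M$ at $v_i$ lies in $E_1$,} \\ y & \text{if the edge of $M$ at $v_i$ lies in $E_2$.} \end{cases}
\]
This gives a well-defined map $M \mapsto \epsilon(M) \in \{y,n\}^n$ that partitions the set of all perfect matchings of $G_1 \# G_2$.

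The core claim is that for each fixed $\epsilon \in \{y,n\}^n$, the fiber over $\epsilon$ has exactly $G_1(\epsilon_1,\ldots,\epsilon_n) \cdot G_2(\bar{\epsilon}_1,\ldots,\bar{\epsilon}_n)$ elements. First I would observe that specifying such an $M$ is equivalent to independently specifying $M_1 := M \cap E_1$ and $M_2 := M \cap E_2$ subject to the following constraints. The submatching $M_1$ must cover every $x_i$ with $\epsilon_i = n$ and use none of the $x_j$ with $\epsilon_j = y$; hence $M_1$ is exactly a perfect matching of the subgraph of $G_1$ obtained by deleting the $x_j$ for which $\epsilon_j = y$. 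By the very definition of the state sum coefficient, the number of such $M_1$ is $G_1(\epsilon_1,\ldots,\epsilon_n)$. Symmetrically, $M_2$ must cover every $y_i$ with $\epsilon_i = y$ (the ones not yet matched in $G_1$) and use none of the $y_j$ with $\epsilon_j = n$; under the involution swapping the roles of ``available'' and ``already matched,'' this is counted by $G_2(\bar{\epsilon}_1,\ldots,\bar{\epsilon}_n)$. Since the two choices are independent, the fiber has exactly the claimed size.

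Summing over $\epsilon \in \{y,n\}^n$ then gives
\[
M(G_1 \# G_2) = \sum_{\epsilon \in \{y,n\}^n} G_1(\epsilon_1,\ldots,\epsilon_n)\, G_2(\bar{\epsilon}_1,\ldots,\bar{\epsilon}_n),
\]
which is the identity in the theorem. The argument is essentially bookkeeping, so there is no genuine obstacle; the only delicate point is keeping the convention straight, namely that $\epsilon_i = y$ labels the situation where $v_i$ is offered to the other side (deleted from the current side) and $\epsilon_i = n$ labels the situation where $v_i$ is already consumed by an edge of the current side. Once this convention is pinned down, the complementarity $\epsilon \leftrightarrow \bar{\epsilon}$ between the two factors is forced by the requirement that each identified vertex be matched exactly once across the two pieces.
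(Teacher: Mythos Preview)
Your argument is correct. The partition of perfect matchings of $G_1 \# G_2$ by the side on which each identified vertex is matched, together with the observation that non-distinguished vertices of $G_i$ can only be incident to edges of $E_i$, gives exactly the product decomposition claimed. The convention check is accurate: $\epsilon_i = y$ means $v_i$ is deleted from the $G_1$ side (and hence must be matched on the $G_2$ side), which forces the complementary label $\bar{\epsilon}_i$ on the $G_2$ factor.

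Note, however, that the present paper does not actually contain a proof of this theorem: it is quoted in the appendix as Theorem~1 of \cite{PaulSaikia} and invoked as background. So there is no ``paper's own proof'' to compare against here. That said, your argument is the standard and natural one, and it is almost certainly what the cited reference does as well; the content is, as you note, essentially bookkeeping once the meaning of the state-sum coefficients $G(\epsilon_1,\ldots,\epsilon_n)$ is unpacked.
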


In order to organize the calculations we need to understand how to glue two state sum decompositions. In particular we are in the following situation.  \\ 
Let $(G_1; v_1, \ldots, v_{i+j})$ and $(G_2; w_1,  \ldots, w_{j+k})$ be two graphs with $ \left(i+j \right)$ and $ \left( j+k \right)$ distinguished vertices respectively. Assume $v_{G_1}$ and $v_{G_2}$ denote the state sum decomposition of $G_1$ and $G_2$ respectively. We construct a new graph $G$ with $ \left(i+j+k \right)$ distinguished vertices defined as follows: 
\begin{equation}\label{eq:cc}
G := G_1 \cupdot G_2 / v_{i+1} \sim w_1, \ldots , v_{i+j} \sim w_j.
\end{equation}

We want to express $v_{G}$ in terms of $v_{G_{1}}$ and $v_{G_{2}}$. To understand it concretely, we need the notion of internal multiplication. Let $I \subset [n]$ and $J \subset [m]$ with $|I|=|J|$ where we denote the set $\{1,2,\ldots, k\}$ by $[k]$. We can define an \textit{internal multiplication} $\phi_{I,J}$ as follows: 
\[ \phi_{I,J}: \mathcal{M}^{\otimes n} \otimes \mathcal{M}^{\otimes m} \to \mathcal{M}^{\otimes (n+m- |I|)},\] 
where the internal multiplication occurs only on the coordinates of $I$ and $J$. More explicitly, if \[|I|=|J|=k, \quad I = \{ i_1 < \ldots < i_k \}, \quad \text{and} \quad J= \{ j_1 < \ldots < j_k \}.\] Take two simple tensors $ \left( A \cdot \epsilon_{1} \otimes \ldots \otimes \epsilon_{n} \right) \in \mathcal{M}^{\otimes n}$ and $\left( B \cdot \eta_{1} \otimes \ldots \otimes \eta_{m}\right) \in \mathcal{M}^{\otimes m}$, where $\epsilon_i, \eta_{j} \in \{y, n\}$. Then,
\begin{multline*}
    \phi_{I,J} \left( (A \cdot \epsilon_1 \otimes \ldots \otimes \epsilon_n) \otimes (B \cdot \eta_1 \otimes \ldots \otimes \eta_{m}) \right)\\:= \left( A \cdot B \right) \epsilon_1 \otimes \ldots \otimes \epsilon_{i_1-1}\otimes (\epsilon_{i_1} \cdot \eta_{j_1}) \otimes \ldots \otimes (\epsilon_{i_{k}} \cdot \eta_{j_{k}}) \otimes \ldots \otimes \epsilon_{n} \otimes \eta_1 \otimes \ldots \otimes \widehat{\eta_{j_1}} \otimes \ldots \otimes \widehat{\eta_{j_k}} \otimes \ldots \otimes \eta_{m},
\end{multline*}

\noindent where~~$\widehat{\cdot}$~~denotes the absence of the variable. For general elements $W \in \mathcal{M}^{\otimes n}$ and $V \in \mathcal{M}^{\otimes m}$ , we extend the definition of internal multiplication 
\(\phi_{I,J} ( W \otimes V ) \) by distributivity. We give an example of the internal multiplication:  for $n=3 , m=4$ and  for $I= \{ 2,3 \},  J=\{ 1,4 \}$:
\[ \phi_{I,J} \left( ( \epsilon_1 \otimes \epsilon_2 \otimes \epsilon_3) \otimes (\eta_1 \otimes \eta_2 \otimes \eta_3 \otimes \eta_4) \right) = \epsilon_1 \otimes (\epsilon_2 \cdot \eta_1) \otimes (\epsilon_3 \cdot \eta_4) \otimes \eta_2 \otimes \eta_3. \]
If $I,J$ are well understood we shall remove them. For notational simplicity we shall use simple multiplication symbol $"\cdot"$ to denote the internal multiplication.

Next we state a fundamental lemma which will be useful in the next subsection.
\begin{lemma}[The Patching Lemma, Lemma 3 \cite{PaulSaikia}]\label{lem3}
For $I=\{i+1, \ldots, i+j \} \subset [i+j]$ and $J= \{1, \ldots, j \} \subset [j+k]$, the state sum decomposition of the graph $G$, where $G= (V,E)$ with $n$ distinguished vertices $\{v_1, \ldots, v_n \} \subset V$ is given by  \[v_{G}= \phi_{I,J} (v_{G_1} \otimes v_{G_2}),\] where $G_1$ and $G_2$ are as described in \eqref{eq:cc}.  
\end{lemma}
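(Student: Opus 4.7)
The plan is to verify the claimed tensor identity coefficient by coefficient, through a matching-by-matching bijection that is designed to mirror the defining relations of the matching algebra $\mathcal{M}$. I would fix an assignment $\alpha = \alpha_1 \otimes \cdots \otimes \alpha_{i+j+k} \in \{y,n\}^{\otimes(i+j+k)}$ for the distinguished vertices of $G$ and compare $G(\alpha)$ with the coefficient of this simple tensor in $\phi_{I,J}(v_{G_1} \otimes v_{G_2})$.

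The key structural observation is that $G$ has no edges running between the $G_1$-side and the $G_2$-side, since it is built purely by identifying the $j$ middle pairs of distinguished vertices; hence any perfect matching of $G^{\alpha}$ splits cleanly into a matching on each side, and the only interaction between the two sides occurs through the merged vertices $u_{i+l} = v_{i+l} = w_l$.  For each merged vertex with $\alpha_{i+l}=n$, the match sends it either to a $G_1$-neighbour or to a $G_2$-neighbour, but not both.  This trichotomy is encoded by side-status vectors $\epsilon \in \{y,n\}^{i+j}$ and $\eta \in \{y,n\}^{j+k}$: copy $\alpha$ on the outer coordinates (i.e.\ $\epsilon_t = \alpha_t$ for $t \leq i$ and $\eta_{j+s} = \alpha_{i+j+s}$ for $s \leq k$), and on the $j$ middle coordinates set $(\epsilon_{i+l},\eta_l)$ equal to $(y,y)$ if $\alpha_{i+l}=y$, to $(n,y)$ if the merged vertex is matched inside $G_1$, and to $(y,n)$ if it is matched inside $G_2$.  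Under this recipe the two side-restrictions are bona fide perfect matchings of $G_1^{\epsilon}$ and $G_2^{\eta}$, and the choices on the two sides are independent, so
\[ G(\alpha) \;=\; \sum_{(\epsilon,\eta)} G_1(\epsilon)\,G_2(\eta), \]
where the sum ranges over pairs whose outer coordinates equal $\alpha$ and whose middle coordinates satisfy $\epsilon_{i+l} \cdot \eta_l = \alpha_{i+l}$ in $\mathcal{M}$.

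The remainder of the argument is essentially an unwinding of definitions: the relations $y^2 = y$, $yn=ny=n$, and $n^2 = 0$ are tailored so that $\epsilon_{i+l} \cdot \eta_l$ is nonzero and equal to $\alpha_{i+l}$ precisely on the three combinatorially allowed pairs above, while the forbidden pair $(n,n)$, which would correspond to the impossible attempt to match the merged vertex on both sides simultaneously, is killed by $n^2=0$.  Comparing with the definition of the internal multiplication $\phi_{I,J}$ with $I = \{i+1,\ldots,i+j\}$ and $J=\{1,\ldots,j\}$, one sees that the coefficient of $\alpha$ in $\phi_{I,J}(v_{G_1} \otimes v_{G_2})$ is exactly the sum displayed above, completing the proof.

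The main obstacle is the bookkeeping required to verify that restriction and gluing are mutually inverse: one must check that every perfect matching of $G^{\alpha}$ restricts on each side to a perfect matching of the deletion subgraph $G_i^{\epsilon}$ (so no internal vertex is left orphan and no vertex is matched twice), and conversely that any pair of side-matchings with compatible middle coordinates glues uniquely.  This is routine given the absence of cross-edges, but the index sets $I$ and $J$ must be tracked with care so that internal multiplication acts on exactly the $j$ merged coordinates and nowhere else.
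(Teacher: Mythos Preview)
The paper does not actually prove this lemma: it is stated in the appendix with attribution to \cite{PaulSaikia} and used as a black box throughout Section~7, so there is no in-paper argument to compare against.

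On its own merits, your proof is correct and is the natural argument. The essential point you identify---that the connected sum $G_1 \# G_2$ introduces no new edges, only vertex identifications, so every edge of a perfect matching of $G^{\alpha}$ lies wholly in $G_1$ or wholly in $G_2$---is exactly what makes the decomposition work. Your trichotomy $(y,y)$, $(n,y)$, $(y,n)$ for the middle coordinates, with $(n,n)$ excluded, matches precisely the multiplication table of $\mathcal{M}$, and the independence of the two sides once the middle statuses are fixed gives the product $G_1(\epsilon)\,G_2(\eta)$. The bookkeeping you flag as the main obstacle is indeed routine: since edges do not cross, restriction cannot leave an internal vertex unmatched or doubly matched, and gluing two compatible side-matchings cannot create a conflict anywhere except at the merged vertices, where compatibility has been imposed by hypothesis. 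Nothing is missing.
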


\bibliographystyle{alpha}

\begin{thebibliography}{EKLP92b}

\bibitem[BN05]{MR2174270}
Dror Bar-Natan.
\newblock Khovanov's homology for tangles and cobordisms.
\newblock {\em Geom. Topol.}, 9:1443--1499, 2005.

\bibitem[BNM06]{MR2253455}
Dror Bar-Natan and Scott Morrison.
\newblock The {K}aroubi envelope and {L}ee's degeneration of {K}hovanov homology.
\newblock {\em Algebr. Geom. Topol.}, 6:1459--1469, 2006.

\bibitem[CEKZ01]{MR1845144}
M.~Ciucu, T.~Eisenk\"{o}lbl, C.~Krattenthaler, and D.~Zare.
\newblock Enumeration of lozenge tilings of hexagons with a central triangular hole.
\newblock {\em J. Combin. Theory Ser. A}, 95(2):251--334, 2001.

\bibitem[CF16]{MR3433498}
Mihai Ciucu and Ilse Fischer.
\newblock Lozenge tilings of hexagons with arbitrary dents.
\newblock {\em Adv. in Appl. Math.}, 73:1--22, 2016.

\bibitem[CF23]{MR4620362}
Mihai Ciucu and Ilse Fischer.
\newblock Lozenge tilings of hexagons with removed core and satellites.
\newblock {\em Ann. Inst. Henri Poincar\'{e} D}, 10(3):407--501, 2023.

\bibitem[Ciu96]{MR1382040}
Mihai Ciucu.
\newblock Perfect matchings of cellular graphs.
\newblock {\em J. Algebraic Combin.}, 5(2):87--103, 1996.

\bibitem[Ciu10]{MR2663560}
Mihai Ciucu.
\newblock {\em Perfect matchings and applications}, volume~26 of {\em COE Lecture Note}.
\newblock Kyushu University, Faculty of Mathematics, Fukuoka, 2010.
\newblock Math-for-Industry (MI) Lecture Note Series.

\bibitem[CK99]{MR1682965}
M.~Ciucu and C.~Krattenthaler.
\newblock The number of centered lozenge tilings of a symmetric hexagon.
\newblock {\em J. Combin. Theory Ser. A}, 86(1):103--126, 1999.

\bibitem[CK00]{MR1855591}
Mihai Ciucu and Christian Krattenthaler.
\newblock Plane partitions. {II}. {$5\frac 12$} symmetry classes.
\newblock In {\em Combinatorial methods in representation theory ({K}yoto, 1998)}, volume~28 of {\em Adv. Stud. Pure Math.}, pages 81--101. Kinokuniya, Tokyo, 2000.

\bibitem[CK02]{MR1940333}
Mihai Ciucu and Christian Krattenthaler.
\newblock Enumeration of {L}ozenge tilings of hexagons with cut-off corners.
\newblock {\em J. Combin. Theory Ser. A}, 100(2):201--231, 2002.

\bibitem[CK13]{MR3047654}
Mihai Ciucu and Christian Krattenthaler.
\newblock A dual of {M}ac{M}ahon's theorem on plane partitions.
\newblock {\em Proc. Natl. Acad. Sci. USA}, 110(12):4518--4523, 2013.

\bibitem[DT89]{MR994034}
Guy David and Carlos Tomei.
\newblock The problem of the calissons.
\newblock {\em Amer. Math. Monthly}, 96(5):429--431, 1989.

\bibitem[EKLP92a]{AD1}
Noam Elkies, Greg Kuperberg, Michael Larsen, and James Propp.
\newblock Alternating-sign matrices and domino tilings. {I}.
\newblock {\em J. Algebr. Comb.}, 1(2):111--132, 1992.

\bibitem[EKLP92b]{MR1226347}
Noam Elkies, Greg Kuperberg, Michael Larsen, and James Propp.
\newblock Alternating-sign matrices and domino tilings. {I}.
\newblock {\em J. Algebraic Combin.}, 1(2):111--132, 1992.

\bibitem[Kho00]{MR1740682}
Mikhail Khovanov.
\newblock A categorification of the {J}ones polynomial.
\newblock {\em Duke Math. J.}, 101(3):359--426, 2000.

\bibitem[Kra06]{MR2259946}
C.~Krattenthaler.
\newblock Descending plane partitions and rhombus tilings of a hexagon with a triangular hole.
\newblock {\em European J. Combin.}, 27(7):1138--1146, 2006.

\bibitem[Lee05]{MR2173845}
Eun~Soo Lee.
\newblock An endomorphism of the {K}hovanov invariant.
\newblock {\em Adv. Math.}, 197(2):554--586, 2005.

\bibitem[Mac04]{MR2417935}
Percy~A. MacMahon.
\newblock {\em Combinatory analysis. {V}ol. {I}, {II} (bound in one volume)}.
\newblock Dover Phoenix Editions. Dover Publications, Inc., Mineola, NY, 2004.
\newblock Reprint of {\it An introduction to combinatory analysis} (1920) and {\it Combinatory analysis. Vol. I, II} (1915, 1916).

\bibitem[Pro88]{MR936084}
Robert~A. Proctor.
\newblock Odd symplectic groups.
\newblock {\em Invent. Math.}, 92(2):307--332, 1988.

\bibitem[PS24]{PaulSaikia}
Pravakar Paul and Manjil~P. Saikia.
\newblock A novel approach to counting perfect matchings of graphs.
\newblock preprint (arXiv:2408.10273), 2024.

\bibitem[Ras10]{MR2729272}
Jacob Rasmussen.
\newblock Khovanov homology and the slice genus.
\newblock {\em Invent. Math.}, 182(2):419--447, 2010.

\bibitem[RR86]{MR865837}
David~P. Robbins and Howard Rumsey, Jr.
\newblock Determinants and alternating sign matrices.
\newblock {\em Adv. in Math.}, 62(2):169--184, 1986.

\bibitem[Ste95]{MR1318529}
John~R. Stembridge.
\newblock The enumeration of totally symmetric plane partitions.
\newblock {\em Adv. Math.}, 111(2):227--243, 1995.

\end{thebibliography}

\end{document}